\renewcommand*{\eqref}[1]{%
	\hyperref[{#1}]{\textup{\tagform@{\!\!\ref*{#1}}}}%
}\makeatother 
\theoremstyle{plain}
\newtheorem{theorem}{Theorem}[section]
\newtheorem{lemma}[theorem]{Lemma}
\newtheorem{corollary}[theorem]{Corollary}
\theoremstyle{definition}
\newtheorem{definition}[theorem]{Definition}
\newtheorem{remark}[theorem]{Remark}
\newcommand{\bignorm}[1]{{\Big\|#1\Big\|}}
\newcommand{\norm}[1]{{\|#1\|}}
\def\supp{\mathop{\mathrm{supp}}\nolimits}
\def\Id{\mathop{\mathrm{Id}}\nolimits}
\def\Ker{\mathop{\mathrm{Ker}}\nolimits}
\def\Im{\mathop{\mathrm{Im}}\nolimits}
\def\sgn{\mathop{\mathrm{sgn}}\nolimits}
\def\R{{\mathbb{R}}}
\def\N{{\mathbb{N}}}
\def\C{{\mathbb{C}}}
\def\S{{\mathcal{S}}}
\def\H{{\mathcal{H}}}
\def\<{{\langle}}
\def\>{{\rangle}}
\DeclareMathOperator*{\slim}{s-lim}
\DeclareMathOperator*{\wlim}{w-lim}
\DeclareMathOperator*{\limi}{\underline{lim}}
\title[Higher-order Schr\"odinger operators with bound potentials]{Global Kato smoothing and Strichartz estimates for Schr\"odinger type equations with rough decay potentials}
\author{Haruya Mizutani}
\address{Department of Mathematics, Graduate School of Science, Osaka University, Toyonaka, Osaka 560-0043, Japan}
\email{haruya@math.sci.osaka-u.ac.jp}
\author{Xiaohua Yao\textsuperscript{\dag }}
\address{School of Mathematics and Statistics,   Key Laboratory of Nonlinear Analysis  and Applications (Ministry of Education), Central China Normal University, Wuhan, 430079, P.R. China}
\email{yaoxiaohua@ccnu.edu.cn}
\thanks{\textsuperscript{\dag} Corresponding author}
\keywords{Higher-order Schr\"odinger operator, Kato smoothing estimate, Strichartz estimate, Uniform resolvent estimate}
\begin{document}
\date{\today}

\begin{abstract}

	Let \( H = (-\Delta)^m + V \) be a higher-order elliptic operator on \( L^2(\mathbb{R}^n) \), where \( V \) is a general bounded decaying potential. This paper focuses on the global Kato smoothing and Strichartz estimates for  solutions to Schr\"odinger-type equation associated with \( H \). In particular, we first establish sharp global Kato smoothing estimates for \( e^{itH} \), based on uniform resolvent estimates of Kato-Yajima type for the absolutely continuous part of \( H \). As a consequence, we also obtain optimal local decay estimates. Using these local decay estimates, we then prove the full set of Strichartz estimates, including the endpoint case. Notably, we derive Strichartz estimates with sharp smoothing effects for higher-order cases with rough potentials, which are applicable to the study of nonlinear higher-order Schr\"odinger equations. Finally, we introduce new uniform Sobolev estimates of the Kenig-Ruiz-Sogge type, incorporating an additional derivative term, which are crucial for establishing the sharp Kato smoothing estimates.

\end{abstract}

\maketitle

\section{Introduction}
\subsection{Main results}

Consider the following higher-order elliptic operator with a potential on \( L^2(\mathbb{R}^n) \):
\[
H = H_0 + V(x), \quad H_0 = (-\Delta)^m,
\]
where \( n > 2m \), \( m \in \mathbb{N} \), and \( \Delta = \sum_{j=1}^n \partial_{x_j}^2 \) is the Laplacian. The potential \( V(x) \) is a real-valued measurable function that satisfies \( |V(x)| \leq C \langle x \rangle^{-s} \) for some \( s > 0 \), where \( \langle x \rangle = (1 + |x|^2)^{1/2} \).

It is well-known that higher-order elliptic operators (including more general forms \( P(D) + V \)) have been extensively studied as general Hamiltonian operators in various contexts. For instance, Schechter \cite{Schechter} has explored their spectral theory, while Kuroda \cite{Kur78}, Agmon \cite{Agmon} and H\"ormander \cite{Ho} have contributed to their scattering theory. Ben-Artzi and Devinatz \cite{Ben-Devi} investigated the limiting absorption principle, and Davies \cite{Da}, along with Davies and Hinz \cite{DaHi} and Deng et al. \cite{DDY}, have examined their semigroup theory. Additionally, Herbst and Skibsted \cite{HS15, HS17} studied the eigenfunctions of pseudodifferential operators (PDOs), and further interesting topics can be found in these works \cite{BS, SYY}.

For any bounded real potential \( V \), it follows directly from Kato-Rellich theorem (see, e.g., Simon \cite[Chapter 7]{Simon}) that \( H \) is a self-adjoint operator with the same Sobolev domain \( H^{2m}(\mathbb{R}^n) \) of order \( 2m \) as \( H_0 \). In particular, Stone's theorem asserts that the Schr\"odinger unitary group \( e^{itH} \) provides the solution to Schr\"odinger equation:
\begin{align}
	\label{Cauchy}
	(i\partial_t + H)\psi(t,x) = 0, \quad \psi(0,x) = \psi_0(x), \quad (t,x) \in \mathbb{R}^{1+n},
\end{align}
by \( \psi(t) = e^{itH}\psi_0 \) for \( t \in \mathbb{R} \).

In this paper, we aim to establish global regularity and decay estimates for the solution of the Cauchy problem \eqref{Cauchy}: global Kato smoothing estimates and  Strichartz estimates. These estimates are not only interesting in themselves, but also crucial for investigating the local and global behavior of nonlinear dispersive equations with potentials (see, e.g., \cite{MiWangYao}). This work extends our previous study \cite{MiYa1}, where we considered the case with  Hardy potential \( V(x) = a|x|^{-2m} \)  and more generally, critically decaying potentials \( V(x) = O(\langle x \rangle^{-2m}) \) under several repulsive conditions. It is well known that under such repulsive conditions, \( H \) is purely absolutely continuous and has no eigenvalues.

Here using a different approach based on the limiting absorption principle, we establish these estimates for a general bounded potential \( V(x) \) that decays slightly faster than \( |x|^{-2m} \) as \( |x| \to \infty \). Under this setting, \( H \) may possibly have negative eigenvalues. Such potentials naturally arise in many physically relevant models, such as one-body higher-order Schr\"odinger operators or linearized operators associated with soliton solutions to higher-order nonlinear Schr\"odinger equations.

 The first results are stated as follows:

\begin{theorem}
\label{Kato-smoothing}
Let $m\in  \N$, $m\ge2$, $n>2m$,  $H=(-\Delta)^m+V$ and $|V(x)|\le C \<x\>^{-s}$ for $s>2m$. Assume that $H$ has no positive eigenvalues and no zero resonance/eigenvalue (see Definition \ref{resonace} in Section \ref{local uniform resolvent estimates}). Let $P_{ac}(H)$ denote the projection onto the absolutely continuous subspace of $H$. Then the following statements
(i.e. \underline{the global kato smoothing estimates}) are satisfied:
\vskip0.2cm
 (i) If $m-n/2< \gamma< m-1/2$,  then
\begin{equation}
\label{eq1.1}
\big\||x|^{-m+\gamma}|D|^\gamma e^{itH}P_{ac}(H)\psi_0\big\|_{L^2_tL^2_x}\lesssim \norm{\psi_0}_{L^2_x}.\end{equation}
where $D=-(i\partial_{x_1},i\partial_{x_2},\cdots,i\partial_{x_n})$, $|D|=\sqrt{-\Delta}$. In particular, as $\gamma=0$, the following local decay estimate holds:
\begin{equation}
\label{eq1.2}
\big\||x|^{-m} e^{itH}P_{ac}(H)\psi_0\big\|_{L^2_tL^2_x}\lesssim \norm{\psi_0}_{L^2_x}.\end{equation}

(ii) If $\gamma=m-1/2$, then for any $\epsilon>0$,
\begin{equation}
\label{eq1.3}
\big\|\<x\>^{-1/2-\epsilon} |D|^{m-1/2}e^{itH}P_{ac}(H)\psi_0\big\|_{L^2_tL^2_x}\lesssim \norm{\psi_0}_{L^2_x}.\end{equation}
In particular, $e^{itH}\psi_0$ belongs to $\H^{m-1/2}_{Loc}(\R^n)$ for a.e. $t\in \R$ and initial data $\psi_0=P_{ac}(H)\psi_0\in L^2(\R^n)$, and satisfies \underline{the following local smoothing estimate}:
\begin{equation}
\label{eq1.4}\int_{-\infty}^\infty\int_{|x|\le R}\big||D|^{m-1/2}e^{itH}\psi_0\big|^2dxdt\le C(R)\|\psi_0\|_{L^2_x}^2.\end{equation}
\end{theorem}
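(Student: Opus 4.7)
The plan is to deduce both parts of Theorem~\ref{Kato-smoothing} from sharp uniform weighted resolvent estimates for $H$ via Kato's abstract smoothing theorem, which asserts
$$\big\|A e^{itH}P_{ac}(H)\psi_0\big\|_{L^2_tL^2_x}^2\lesssim \sup_{\lambda\in\R,\,\ep>0}\big\|A\big(R(\lambda+i\ep;H)-R(\lambda-i\ep;H)\big)P_{ac}(H) A^*\big\|_{L^2\to L^2}\norm{\psi_0}_{L^2_x}^2.$$
For part~(i), with $A=|x|^{-m+\gamma}|D|^\gamma$, the task reduces to the uniform weighted resolvent bound
\begin{equation}\label{eq:resolvent-plan}
\sup_{z\in\C\setminus\R}\big\||x|^{-m+\gamma}|D|^\gamma R(z;H)P_{ac}(H)|D|^\gamma|x|^{-m+\gamma}\big\|_{L^2\to L^2}<\infty,\qquad m-\tfrac{n}{2}<\gamma<m-\tfrac12,
\end{equation}
and analogously for part~(ii) with $A=\<x\>^{-1/2-\epsilon}|D|^{m-1/2}$.

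The first step is to establish the free analogue of~\eqref{eq:resolvent-plan} for $H_0=(-\Delta)^m$, which is the Kato-Yajima type estimate mentioned in the abstract. By duality this is equivalent to a uniform Sobolev inequality of Kenig-Ruiz-Sogge type carrying an extra fractional derivative on the $L^{q}$ side, and this is the route advertised in the introduction. An alternative that works away from the Stein-Tomas endpoint is the partial-fraction decomposition $((-\Delta)^m-z)^{-1}=\sum_{j=1}^m c_j(z)(-\Delta-z_j)^{-1}$ into the $m$-th roots $z_j$ of $z$, reducing matters to the classical second-order Kato-Yajima bound at each linear factor in the admissible range of $\gamma$.

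To pass from $H_0$ to $H$ I would factorize $V=v_1v_2$ with $v_1=|V|^{1/2}\sgn V$ and $v_2=|V|^{1/2}$, so that $|v_j(x)|\lesssim \<x\>^{-s/2}$ with $s/2>m$. The second resolvent identity
$$R(z;H)=R(z;H_0)-R(z;H_0)v_1\big(I+v_2R(z;H_0)v_1\big)^{-1}v_2R(z;H_0)$$
reduces \eqref{eq:resolvent-plan}, after sandwiching with $|x|^{-m+\gamma}|D|^\gamma$, to three uniform bounds in $z$: (a) the free version of~\eqref{eq:resolvent-plan}; (b) $L^2$-boundedness of the mixed operators $|x|^{-m+\gamma}|D|^\gamma R(z;H_0)v_1$ and its adjoint, which follows from~(a) by a Cauchy-Schwarz splitting of the weights since $\<x\>^{-s/2}$ with $s>2m$ dominates $|x|^{-m}$; and (c) uniform invertibility of $I+v_2R(z;H_0)v_1$ on $L^2$, after composition with $P_{ac}(H)$. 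Item~(c) is the perturbative crux: the decay $s>2m$ makes $v_2R(z;H_0)v_1$ a compact operator on $L^2$ and, by the limiting absorption principle for $H_0$, the map $z\mapsto v_2R(z;H_0)v_1$ extends continuously up to the continuous spectrum $[0,\infty)$; the Fredholm alternative together with the hypotheses (no positive eigenvalues and no zero resonance of $H$) rules out a nontrivial kernel at every $\lambda\ge 0$, while $P_{ac}(H)$ absorbs the negative eigenvalues of $H$, and a continuity-plus-norm-decay-at-infinity argument upgrades the pointwise invertibility to a uniform bound on $\C\setminus\R$.

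The main obstacle I expect is the endpoint case $\gamma=m-1/2$ of part~(ii). At this threshold the homogeneous weight $|x|^{-1/2}$ is inadmissible in the Kato-Yajima framework, and one must use the Agmon-H\"ormander inhomogeneous weight $\<x\>^{-1/2-\epsilon}$ together with a dyadic frequency decomposition and a Morrey-Campanato style argument at the free level to recover the sharp uniform resolvent bound. Once this free endpoint estimate is in hand, the perturbation argument of the previous paragraph applies with essentially no change, and the local smoothing inequality~\eqref{eq1.4} is an immediate consequence of~\eqref{eq1.3} by restricting the spatial integration to $\{|x|\le R\}$ and absorbing $\<x\>^{-1/2-\epsilon}$ into the constant $C(R)$.
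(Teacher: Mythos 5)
Your overall strategy coincides with the paper's: both rely on the symmetric second resolvent identity $R(z)=R_0(z)-R_0(z)w\,M(z)^{-1}v\,R_0(z)$ with $M(z)=I+wR_0(z)v$, Fredholm--Riesz theory and the limiting absorption principle for the uniform invertibility of $M(z)$ away from $\sigma_p(H)$, and Kato's smooth-perturbation theorem to pass from uniform weighted resolvent bounds to $L^2_tL^2_x$-smoothing.

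There is however a real gap in your step (b). You assert that the uniform bound on the mixed operator $|x|^{-m+\gamma}|D|^\gamma R_0(z)v_1$ ``follows from (a) by a Cauchy--Schwarz splitting of the weights since $\langle x\rangle^{-s/2}$ dominates $|x|^{-m}$.'' This does not go through: knowing that $A$ and $v_1$ are each $H_0$-supersmooth, i.e.\ that $\sup_z\|AR_0(z)A^*\|$ and $\sup_z\|v_1R_0(z)v_1\|$ are finite, only controls the \emph{imaginary} part $A\Im R_0(z)v_1$ via $\|A\Im R_0(z)^{1/2}\|\cdot\|\Im R_0(z)^{1/2}v_1\|$; it does not control the real part, and no simple factorization of $v_1$ through $|D|^\gamma|x|^{-m+\gamma}$ works because $|D|^\gamma$ and the weight $|x|^{a}$ do not commute. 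This mixed bound is precisely where the paper's new ingredient enters: the uniform Sobolev inequality with fractional derivatives, $\||D|^\alpha u\|_{L^q}\lesssim\|((-\Delta)^m-z)u\|_{L^p}$ for $2m-n<\alpha\le 2m-2n/(n+1)$ and $1/p-1/q=(2m-\alpha)/n$, together with its Lorentz-space refinement. Combined with (weak) H\"older, this gives $\sup_z\||x|^{-m+\gamma}|D|^\gamma R_0(z)w\|_{L^2\to L^2}<\infty$ directly (with the analogous $\langle x\rangle^{-1/2-\epsilon}$ version at the endpoint $\gamma=m-\tfrac12$). Without this estimate the perturbation argument does not close, so you should supply either a proof of the fractional uniform Sobolev estimate or some substitute for the mixed bound.

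Two smaller remarks. First, for the free endpoint estimate $\gamma=m-\tfrac12$ you propose a dyadic Morrey--Campanato argument, whereas the paper obtains \eqref{eq2.3} from Agmon's elementary weighted integration-by-parts inequality $\int\langle x_j\rangle^{-2s}|P^j(D)u|^2\lesssim\int\langle x_j\rangle^{2s}|P(D)u|^2$; this is considerably lighter machinery, though your route should also work. Second, the phrase ``$P_{ac}(H)$ absorbs the negative eigenvalues'' conceals a genuine step: Theorem~\ref{local H-smoothing} only gives uniformity for $\lambda$ outside a neighborhood of $\sigma_p(H)$, and one must separately show that inserting $P_{ac}(H)$ both preserves the weighted $L^2$ norm (Lemma~\ref{lemma_projection}, which uses Stein--Weiss and the rapid decay of the eigenfunctions) and, via the spectral theorem, trivializes the resolvent bound when $\lambda$ lies in that excluded neighborhood; this is Corollary~\ref{corollary_3} in the paper and needs to be spelled out.
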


\vskip0.4cm 
\begin{remark} Several remarks are given as follows:
	
\begin{itemize}

\item We first note that for \( m = 1 \), Kato \cite{K} showed the absence of positive eigenvalues for the operator \( H = -\Delta + V \) with potentials decaying as \( V = o(\langle x \rangle^{-1}) \) as \( |x| \to \infty \). However, for \( m \geq 2 \), the higher-order Schr\"odinger operators \( H = (-\Delta)^m + V \) may have discrete eigenvalues embedded in the positive real line, even for smooth potentials \( V \in C_0^\infty(\mathbb{R}^n) \) (see, e.g., Remark \ref{remark 3.1} below for the construction of counterexamples). This observation demonstrates that the decay and regularity of potentials do not always prevent the appearance of positive eigenvalues in higher-order cases.

On the other hand, it is known from the work of Feng et al. \cite{FSWY} that if a bounded potential satisfies the repulsive condition (i.e., \( (x \cdot \nabla)V \leq 0 \)), then for each \( m \geq 1 \), the operator \( H = (-\Delta)^m + V \) has no eigenvalues. This result is particularly useful for further studies of higher-order dispersive problems.
Additionally, for this and other assumptions given in Theorem \ref{Kato-smoothing}, more detailed comments can be found in Subsection \ref{fur-remark} and Remark \ref{remark 3.1} below.

\item Furthermore, we would like to provide additional remarks on the Kato smoothing estimates discussed above. When \( V = 0 \) (i.e., \( H = (-\Delta)^m \)), local smoothing estimates like \eqref{eq1.4}, originally traced back to Kato \cite{Ka} for the KdV equation, were first proved by Constantin and Saut \cite{Con-Saut1} for general dispersive equations. These estimates were further studied by many other authors, including Ben-Artzi and Devinatz \cite{Ben-Dev}, and Kenig-Ponce-Vega \cite{KPV}.

The global-in-time smoothing estimates \eqref{eq1.1} and \eqref{eq1.3} were first established by Kato and Yajima \cite{KaYa} for \( m = 1 \) (i.e., the Laplacian operator \( -\Delta \)) using uniform resolvent estimates, based on the smooth perturbation method \cite{Kat}. These results were also reproved by Ben-Artzi and Klainerman \cite{Ben-Kla} using the spectral measure integral.  

For higher-order operators \( (-\Delta)^m \) with \( m \geq 2 \), and even for fractional operators \( (-\Delta)^\alpha \) with \( \alpha > 0 \), optimal global smoothing estimates such as \eqref{eq1.1} and \eqref{eq1.3} have been studied in the works of Ruzhansky and Sugimoto \cite{RuSu}.  References therein provide further insights into these results.

\item

In the case where \( V \neq 0 \), local smoothing estimates have been considered by Constantin and Saut \cite{Con-Saut2} for (higher-order) Schr\"odinger equations, even under general perturbations of the form \( V(x,D) \). In particular, they proved the following local-in-time and space smoothing estimate (see Corollary 2.4 of \cite{Con-Saut2}):
\begin{equation}
	\label{eq1.5}
	\int_{-T}^T \int_{|x| \leq R} \big| |D|^{m - 1/2} \psi(t,x) \big|^2 dx \, dt \leq C(T,R) \|\psi_0\|_{L^2_x}^2,
\end{equation}
which is weaker than the estimate \eqref{eq1.4} since \( C(T,R) \) depends on \( T \).

For global-in-time smoothing estimates with potentials, the situation is more delicate than in the local-in-time case \eqref{eq1.5}, as it particularly depends on the global time and space behavior of the solution \( e^{itH}\psi_0 \) and the spectral properties of \( H \) at the threshold (i.e., the critical values of the symbol \( P(\xi) \)). For \( H = -\Delta + V \) (i.e., \( m = 1 \)) under the same conditions as in Theorem \ref{Kato-smoothing}, Ben-Artzi and Klainerman \cite{Ben-Kla} obtained the following (similar but not identical) global estimate:
\begin{equation}
	\label{eq1.6}
	\int_{-\infty}^\infty \int_{\mathbb{R}^n} \big|\langle x \rangle^{-1/2 - \epsilon} (1 + H)^{1/4} e^{itH} P_{ac}(H) \psi_0 \big|^2 dx \, dt \leq C \|\psi_0\|_{L^2_x}^2,
\end{equation}
(see also \cite[Theorem 1.10]{Miz2}, where the same estimate as \eqref{eq1.3} was obtained for \( m = 1 \)).

To the best of our knowledge, global smoothing estimates like \eqref{eq1.1} and \eqref{eq1.3} are less known for \( m \geq 2 \). In particular, we remark that the estimates \eqref{eq1.1} and \eqref{eq1.3} in Theorem \ref{Kato-smoothing} are optimal when compared to the free case (see, e.g., Ruzhansky and Sugimoto \cite{RuSu}). Moreover, the local decay estimates \eqref{eq1.2} will play a crucial role in establishing the endpoint Strichartz estimates for equation \eqref{eq1.1} (see Theorem \ref{theorem_4_1} below).

\end{itemize}
\end{remark}

Besides, in the following Theorem \ref{theorem_2},  by an abstract argument due to \cite{Kat} and \cite{DAn}, we are also able to establish Kato smoothing effect for the solution $\psi$ of the inhomogeneous equation
\begin{align}
\label{Cauchy_2}
(i\partial_t+H) \psi(t,x)=F(t,x),\quad \psi(0,x)=\psi_0(x),
\end{align}
with data $\psi_0\in \S(\R^n)$ and $F\in \S(\R\times\R^n)$ (the Schwartz function space).  The solution $\psi$ is  given by the Duhamel formula:
\begin{equation}
\label{Duhamel}
\psi=e^{itH}\psi_0+i\int_0^te^{i(t-s)H}F(s)ds.
\end{equation}
\vskip0.3cm
\begin{theorem}	
\label{theorem_2}
Under the same conditions as in Theorem \ref{Kato-smoothing}, the solution $\psi$ to \eqref{Cauchy_2} given by \eqref{Duhamel} satisfies the following statements:
\vskip0.3cm
(i) If $m-n/2< \gamma< m-1/2$, then
\begin{align*}
\big\||x|^{-m+\gamma}|D|^{\gamma}P_{ac}(H) \psi\big\|_{L^2_tL^2_x}&\lesssim \norm{\psi_0}_{L^2_x}+\big\||x|^{m-\gamma}|D|^{-\gamma}F\big\|_{L^2_tL^2_x}.
\end{align*}
Furthermore, if $\gamma=0$, then the following local decay estimate holds:
\begin{align*}
\big\||x|^{-m}P_{ac}(H)\psi\big\|_{L^2_tL^2_x}&\lesssim \norm{\psi_0}_{L^2_x}+\big\||x|^m F\big\|_{L^2_tL^2_x}.
\end{align*}

(ii) If $\gamma= m-1/2$, then for any $\epsilon>0$,
\begin{align*}
\big\|\<x\>^{-1/2-\epsilon} |D|^{m-1/2}P_{ac}(H)\psi\big\|_{L^2_tL^2_x}&\lesssim \norm{\psi_0}_{L^2_x}+\big\|\<x\>^{1/2+\epsilon} |D|^{-m+1/2}F\big\|_{L^2_tL^2_x}.
\end{align*}
\end{theorem}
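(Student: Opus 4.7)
The approach is to split $\psi$ via the Duhamel formula into $\psi=e^{itH}\psi_0+i\int_0^t e^{i(t-s)H}F(s)\,ds$ and handle the two pieces separately. The homogeneous piece is immediate from Theorem~\ref{Kato-smoothing}: writing $A$ for any of the three weight operators $|x|^{-m+\gamma}|D|^\gamma$, $|x|^{-m}$, or $\<x\>^{-1/2-\epsilon}|D|^{m-1/2}$ appearing on the left-hand sides of parts~(i) and~(ii), Theorem~\ref{Kato-smoothing} reads precisely $\|AP_{ac}(H)e^{itH}\psi_0\|_{L^2_tL^2_x}\lesssim\|\psi_0\|_{L^2_x}$. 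The task thus reduces to establishing the corresponding weighted bound for $\Phi(t):=\int_0^t e^{i(t-s)H}F(s)\,ds$, with the right-hand side $\|BF\|_{L^2_tL^2_x}$ involving the formal dual weight $B=|x|^{m-\gamma}|D|^{-\gamma}$ (respectively $|x|^m$ and $\<x\>^{1/2+\epsilon}|D|^{-m+1/2}$).

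To estimate $\Phi$, I invoke the abstract smoothing machinery of Kato \cite{Kat} in the form formalized by D'Ancona \cite{DAn}. Since $B$ coincides with $(A^*)^{-1}$ up to an irrelevant factor, substituting $F=A^*G$ rewrites the desired inequality as the $L^2_tL^2_x\to L^2_tL^2_x$-boundedness of
$$
G\ \longmapsto\ AP_{ac}(H)\int_0^t e^{i(t-s)H}A^*G(s)\,ds.
$$
By Kato's equivalence, the homogeneous smoothing bound of Theorem~\ref{Kato-smoothing} is the same as the uniform resolvent estimate
$$
\sup_{\lambda\in\R,\,\mu\neq 0}\big\|A\,(H-\lambda-i\mu)^{-1}P_{ac}(H)A^*\big\|_{L^2_x\to L^2_x}<\infty,
$$
and a Plancherel argument in the time variable converts this into the claimed mapping property of the retarded operator above. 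This argument applies uniformly to each of the three choices of $A$.

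The main technical step is the passage from the untruncated $TT^*$-type bound for $\int_\R e^{i(t-s)H}$ (immediate from the homogeneous smoothing by duality) to the causal truncation $\int_0^t$. This is precisely what the abstract lemma of \cite{DAn} supplies: after Fourier transform in $t$ the retarded kernel becomes the boundary value $(H-\lambda-i0)^{-1}$, and causality is reflected in one-sided analytic extension into the upper half-plane, whose $L^2_t$-boundedness follows from the uniform resolvent bound together with a standard Hilbert-transform-type argument. Since Theorem~\ref{Kato-smoothing} supplies the required $H$-smoothness for each of the three weights without imposing new hypotheses, this scheme yields the three inequalities of parts~(i) and~(ii) simultaneously, and no new assumption on $V$ beyond that of Theorem~\ref{Kato-smoothing} is needed.
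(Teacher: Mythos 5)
Your proposal is correct and follows essentially the same route as the paper: the paper proves Theorems~\ref{Kato-smoothing} and~\ref{theorem_2} together as Theorem~\ref{Kato-smoothing_2}, whose (one-line) proof cites exactly the $H$-supersmoothness of $|x|^{-m+\gamma}|D|^{\gamma}P_{ac}(H)$ and $\langle x\rangle^{-1/2-\epsilon}|D|^{m-1/2}P_{ac}(H)$ from Corollary~\ref{corollary_3}, Kato's smooth perturbation theory \cite{Kat}, and D'Ancona's abstract inhomogeneous lemma \cite{DAn}. Your write-up merely unpacks what the citation to \cite{DAn} supplies (Fourier transform in $t$, boundary values of the resolvent, causality via half-plane analyticity), and correctly places $P_{ac}(H)$ inside the smoothing operator so that the uniform resolvent bound of Corollary~\ref{corollary_3} is the one being used; no gap.
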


\vskip0.4cm 

Finally, we come to state the results on the Strichartz estimates for the equation \eqref{Cauchy_2} above under the same conditions as in Theorem \ref{Kato-smoothing}, and  will see that the $L^p$-type of global smoothing estimates can happen for higher-order dispersive equations. To the end,  recall that $(1/p,1/q)\in [0,1]^2$ is said to be a (sharp) $\alpha$-admissible pair if
\begin{align}
\label{admissible}
2\le p,q\le \infty,\quad 1/p=\alpha(1/2-1/q),\quad (p,q,\alpha)\neq (2,\infty,1).
\end{align}

\begin{theorem}
\label{theorem_4_1}
Under the same conditions as in Theorem \ref{Kato-smoothing}, the solution $\psi$ to \eqref{Cauchy_2} satisfies the following statements:
 \vskip0.3cm
(i) If $(p_1,q_1)$ and $(p_2, q_2)$ satisfy \eqref{admissible} with $\alpha=n/(2m)$,  then  $\psi$ satisfies
the following standard Strichartz estimates:
\begin{align}
\label{eq4.4}
\big\|P_{ac}(H)\psi\big\|_{L^{p_1}_tL^{q_1}_x}\lesssim \|\psi_0\|_{L^2_x}+\|F\|_{L^{p_2'}_tL^{ q_2'}_x}.
\end{align}
In particular,  the following endpoint Strichartz estimates hold:
\begin{align}
\label{eq4.4'}
\big\|e^{itH}P_{ac}(H)\psi_0\big\|_{L^{2}_tL^{\frac{2n}{n-2m}}_x}&\lesssim \|\psi_0\|_{L^2_x},\\
\label{eq4.4''}
\bignorm{\int _{0}^t e^{i(t-s)H}P_{ac}(H)F(s)ds}_{L^2_xL^{\frac{2n}{n-2m}}_x}&\lesssim \|F\|_{L^2_xL^{\frac{2n}{n+2m}}_x}.
\end{align}
 \vskip0.3cm
(ii) Let $(p_1,q_1)$ and $(p_2,q_2)$ satisfy \eqref{admissible} with $\alpha=n/2$. Then $\psi$ satisfies
 the following improved Strichartz estimates with gain of regularity:
\begin{align}
\label{eq4.3}
\big\||D|^{2(m-1)/p_1}P_{ac}(H)\psi\big\|_{L^{p_1}_tL^{q_1}_x}\lesssim \|\psi_0\|_{L^2_x}+\big\||D|^{2(1-m)/p_2}F\big\|_{L^{p_2'}_tL^{q_2'}_x}.
\end{align}
\vskip0.3cm
(iii) Let  $\psi_0=P_{ac}(H)\psi_0\in L^2(\R^n)$, $F\equiv 0$ and $(p_1,q_1)=(2,2n/(n-2))$. Then the estimate \eqref{eq4.3} implies that $\psi=e^{itH}\psi_0$ belongs to $\H_{Loc}^{m-1}$ for a.e. $t\in \R$, and satisfies \underline{ the following global $L^p$-smoothing estimate} (comparing with the estimate \eqref{eq1.4}):
\begin{equation}
\label{eq4.5'}
\int_{-\infty}^\infty\Big(\int_{\R^n}\big||D|^{m-1}e^{itH}\psi_0\big|^{\frac{2n}{n-2}}dx\Big)^{\frac{n-2}{n}}dt\le C \ \|\psi_0\|_{L^2_x}^2.\end{equation}
\end{theorem}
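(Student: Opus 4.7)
The plan is to deduce all three statements by combining the appropriate free Strichartz estimates for $H_0=(-\Delta)^m$ with the Kato smoothing estimates from Theorems \ref{Kato-smoothing} and \ref{theorem_2} via Duhamel's formula
\[
e^{itH}P_{ac}(H)\psi_0=e^{itH_0}P_{ac}(H)\psi_0-i\int_0^t e^{i(t-s)H_0}\,V\,e^{isH}P_{ac}(H)\psi_0\,ds
\]
(and the obvious analogous identity for the inhomogeneous source $F$). This is the classical Rodnianski--Schlag / D'Ancona scheme, which applies essentially verbatim once a suitable Kato-smoothing input is available.

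For part (i), the stationary-phase dispersive bound $\|e^{itH_0}\|_{L^1\to L^\infty}\lesssim |t|^{-n/(2m)}$, mass conservation, and the Keel--Tao theorem together give the full $n/(2m)$-admissible Strichartz estimates for $e^{itH_0}$ and for the retarded Duhamel operator, including the double endpoint (permissible since $n/(2m)>1$). Plugged into the Duhamel identity, it suffices to dominate $\|V e^{isH}P_{ac}(H)\psi_0\|_{L^{p_2'}_tL^{q_2'}_x}$ by $\|\psi_0\|_{L^2}$. Factoring $V=(\sgn V\,|V|^{1/2})\cdot |V|^{1/2}$ and applying H\"older in $x$, together with the fact that $V\in L^{n/(2m)}$ (which follows from $s>2m$), this reduces to $\||V|^{1/2}e^{isH}P_{ac}(H)\psi_0\|_{L^2_{t,x}}\lesssim \|\psi_0\|_{L^2}$. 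The pointwise majoration $|V(x)|^{1/2}\lesssim |x|^{-m}$ (valid globally under $s>2m$) then matches the weight of the local decay estimate \eqref{eq1.2}, closing the bound. The inhomogeneous endpoints \eqref{eq4.4} and \eqref{eq4.4''} follow by the symmetric dual argument.

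For part (ii), I would import the Ruzhansky--Sugimoto improved Strichartz estimates for the free higher-order group,
\[
\||D|^{2(m-1)/p}e^{itH_0}\psi_0\|_{L^p_tL^q_x}\lesssim \|\psi_0\|_{L^2},
\]
valid on every $n/2$-admissible pair, and run the same Duhamel argument with the sharp smoothing estimate \eqref{eq1.3} (corresponding to $\gamma=m-1/2$) replacing the role of the local decay estimate from part (i). Part (iii) is then immediate: substituting $F\equiv 0$ and the $n/2$-admissible pair $(p_1,q_1)=(2,2n/(n-2))$ into \eqref{eq4.3} one has $2(m-1)/p_1=m-1$, which is exactly the global $L^p$-smoothing estimate \eqref{eq4.5'}.

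The principal technical obstacle I anticipate is the redistribution of the fractional derivative $|D|^{2(m-1)/p_2}$ across $V$ in part (ii): since $|D|^\alpha$ does not commute with multiplication by $V$, and $V$ is only assumed bounded and polynomially decaying (no Sobolev regularity), a naive H\"older--Leibniz splitting loses either derivatives or decay. I expect to circumvent this by recasting the inhomogeneous estimate in the bilinear $TT^*$ formulation, inserting the smoothing inequality \eqref{eq1.3} symmetrically on both legs of the resulting bilinear form, so that $V$ appears only as a pointwise weight with $|V(x)|\lesssim\langle x\rangle^{-s}$ and never needs to carry fractional derivatives.
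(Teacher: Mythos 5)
Your overall scheme---Duhamel plus free Strichartz plus a smoothing/local-decay input, in the Rodnianski--Schlag style---is the right one and matches the paper. The homogeneous endpoint of (i) via the local decay estimate \eqref{eq1.2} and the bound $|V|^{1/2}\lesssim|x|^{-m}$ is correct as far as it goes. But there are two genuine gaps.

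The main one is in part (ii). You correctly identify that $|D|^{2(1-m)/p_2}$ does not commute with $V$, but your proposed fix---a $TT^*$ bilinear formulation with \eqref{eq1.3} inserted on both legs---does not actually address it: the derivative $|D|^{2(1-m)/p_2}$ still acts on the product $V\,\Gamma_H F$ inside the bilinear form, and symmetrizing the estimate does not make $V$ a pure weight. There is also a mismatch between the tools: \eqref{eq1.3} provides $\langle x\rangle^{-1/2-\epsilon}|D|^{m-1/2}$ smoothing, while after applying the free bound \eqref{eq4.6} at the endpoint pair $(p_2,q_2)=(2,2n/(n-2))$ one is left with $|D|^{-(m-1)}$ and a potential decaying like $\langle x\rangle^{-2m-\epsilon}$; neither the derivative order $m-1/2$ vs.\ $m-1$ nor the weight $\langle x\rangle^{-1/2-\epsilon}$ vs.\ $\langle x\rangle^{-m}$ matches. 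The paper side-steps the redistribution problem entirely: since $\Lambda_2^{-1}=|D|^{-(m-1)}$ maps $L^{2n/(n+2m)}\to L^{2n/(n+2)}$ by Sobolev embedding, the negative derivative is absorbed before it ever touches $V$, and one is left with $\|v\|_{L^{n/m}_x}\|wP_{ac}(H)\Gamma_H F\|_{L^2_tL^2_x}$, controlled via the $\gamma=0$ local decay \eqref{eq1.2}, not \eqref{eq1.3}. The logical order is also reversed from yours: the paper proves (ii) first and then deduces (i) from (ii) by the Sobolev inequality \eqref{Sobolev_2}, rather than proving (i) directly.

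The second, smaller gap is in the inhomogeneous estimate \eqref{eq4.4''} (and its analogue in (ii)). After one application of Duhamel to $\Gamma_H$, a $\Gamma_H$ still sits to the right of $V$, so a second Duhamel iteration is needed, at which point one must know that $wP_{ac}(H)\Gamma_H w$ is bounded on $L^2_tL^2_x$---a consequence of Corollary \ref{corollary_3} ($H$-supersmoothness of $wP_{ac}(H)$) together with Kato's theory, and one must also control $\|\Lambda_{p_2}^{-1}P_{ac}(H)F\|$ via Lemma \ref{lemma_projection} \eqref{eq_lemma_projection_3}. Describing this as ``the symmetric dual argument'' omits the actual mechanism. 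These steps need to be spelled out for the double endpoint, where Christ--Kiselev is unavailable.

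Part (iii) of your proposal is correct: it is an immediate specialization of \eqref{eq4.3}, exactly as in the paper.
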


\vskip0.4cm

Note that when $m=1$, the inequalities \eqref{eq4.4} and  \eqref{eq4.3} are the same and have been proved by Rodnianski-Schlag \cite{RoSc}. However, for $m>1$, the situations are different. In fact, if $(p,q)$ is $n/2$-admissible and $(p,q_1)$ is $n/(2m)$-admissible, then $p,q,q_1$ must satisfy $1/q-1/q_1=2(m-1)/(np)$. Sobolev's inequality then implies
\begin{align}
\label{Sobolev_2}
\norm{f}_{L^{q_1}}\lesssim \norm{|D|^{{2(m-1)}/{p}}f}_{L^{q}}.
\end{align}
Hence \eqref{eq4.4} immediately follows from \eqref{eq4.3}. In this sense, \eqref{eq4.3} has an additional smoothing effect compared with \eqref{eq4.4} in the higher-order case $m>1$. 

Moreover, we notice that such  Strichartz estimates with gain of regularity above have been extensively studied for the free case $H=(-\Delta)^m$ for all $m>1$ and played an important role in the study of higher-order and fractional NLS equations (see e.g. \cite{KPV}, \cite{Pau} and references therein). Therefore, we believe that Theorems \ref{theorem_4_1} have many potential applications to higher-order NLS equations with potentials satisfying $V(x)=O(\<x\>^{-2m-\epsilon})$.

Finally, we should mention that,  besides of Kato smoothing and Strichartz estimates above,  there are some recent interesting works on the pointwise time-decay estimates of $e^{itH}$ for higher-order Schr\"odinger operators with fast decay potentials. In case of $H=(-\Delta)^m+V$, for instance, we can refer to \cite{FSY}, \cite{GT} and \cite{Erdogan-Green-Toprak} for Kato-Jensen estimates or $L^1$-$L^\infty$ estimates  of $e^{itH}$ in the case $m=2$, and \cite{FSWY} for general $m\ge2$. However, comparing with the abundant point-wise decay results on  Schr\"odinger operators $-\Delta+V$ (see e.g. Journ\'{e}  et al \cite{JSS}, Yajima \cite{Yajima-JMSJ-95}, Schlag's survey \cite{Sch} and references therein), the $L^1$-$L^\infty$ decay estimates for higher-order Schr\"odinger operators are still far from completion and  deserve to be further investigated.

\subsection{Further remarks}
\label{fur-remark}We will make  more remarks on all theorems above. In particular, the optimality of the conditions for the operator $(-\Delta)^{m}+V(x)$ is discussed.
\begin{itemize}

\item ({\bf The restriction of dimension $n>2m$})  The dimensional restriction \( n > 2m \) plays a critical role in the validity of several important estimates, including the local decay estimates with \(\gamma = 0\) (as in equation (1.2)), uniform Sobolev estimates in Theorem \ref{Sobolev estimates}, and the endpoint Strichartz estimates in Theorem \ref{theorem_4_1} when either \( p_1 = 2 \) or \( p_2 = 2 \). This restriction ensures the necessary spatial and decay conditions for these estimates to hold.

For instance, without the condition \( n > 2m \), the estimates do not hold even for the free operator \( H = H_0 \). The failure of these estimates for the free operator implies that the methods used to establish the estimates for more general operators would also break down based on  the perturbation argument used in Sections below..

\vskip0.2cm
\item ({\bf The decay rate of potential $V$}) 
The decay index \( s > 2m \) is indeed critical for the validity of several estimates, particularly when dealing with Schr\"odinger operators (i.e., \( m = 1 \)). 
For example, in the case of Schr\"odinger operators (\( m = 1 \)), the work of Goldberg, Vega, and Visciglia \cite{GVV} demonstrated that Strichartz estimates fail to hold for certain classes of repulsive potentials that decay slower than \( |x|^{-2} \). This shows that for potentials with slower decay, even fundamental estimates as Strichartz estimates collapse, leaving only the trivial \( L^\infty_tL^2_x \) estimate. This highlights the delicate balance between the decay of the potential and the validity of key dispersive and smoothing estimates.

In contrast, in our previous paper \cite{MiYa1}, we considered higher-order and fractional Schr\"odinger operators with scaling-critical potentials of the form \( V(x) = O(\langle x \rangle^{-2m}) \), alongside certain conditions on \( \nabla V \). In that work, we successfully established Kato smoothing and Strichartz estimates for higher-order Schr\"odinger operators with critical-decay Hardy-type potentials of the form \( a|x|^{-2m} \). However, the approach in \cite{MiYa1} was based on Mourre theory, which is fundamentally different from the method used in the current paper.
It's important to note that the potential \( V(x) \) considered in this work generally does not satisfy the assumptions in \cite{MiYa1}, which is why the two results do not overlap.

\vskip0.2cm
\item  ({\bf The absence of positive eigenvalues of $H$})
The assumption regarding the absence of positive eigenvalues is indeed crucial for the dispersive analysis of the time-evolution operator \( e^{itH} \). For the case of \( m = 1 \), corresponding to the Schr\"odinger operator \( H = -\Delta + V \), Kato \cite{K} established that if the potential decays as \( |V(x)| = o(|x|^{-1}) \) as \( |x| \to \infty \), then the operator has no positive eigenvalues embedded in the continuous spectrum. This foundational result was later generalized by Agmon \cite{Agmon1}, Simon \cite{Simon3}, and Froese et al. \cite{FHHH},   Ionescu and Jerison \cite{IJ} and Koch and Tataru \cite{KoTa}.

However, for higher-order operators of the form \( H = (-\Delta)^m + V \) with \( m \geq 2 \), the situation is more complicated. In fact, for any even \( m = 2k \) (with \( k \in \mathbb{N} \)), it has been shown that there exist compactly supported smooth potentials \( V \) such that \( H = (-\Delta)^m + V \) can have positive eigenvalues.

Despite this, there are general virial criteria that provide conditions for the absence of positive eigenvalues for a wider class of higher-order (and even fractional) operators. These criteria often apply to repulsive potentials that satisfy conditions \( x \cdot \nabla V \leq 0 \).

\end{itemize}

\subsection{Organization of the paper}
In this subsection, we will describe the organization of paper and outline some specific results and ideas.

In Section \ref{section 2},   the limiting absorption principle of resolvent $R_0(z)$ of $H_0=(-\Delta)^m$  was first considered, which says that the resolvent operator $R_0(z)$ can continue into the spectrum point $\lambda\ge 0$ if one considers $R_0(z)$ as an operator function with value in $\mathbb B(L^2_s, L^2_{-s})$ for some $s>0$. For any $\lambda>0$ (regular values), we need only choose $s>1/2$ due to Agmon \cite{Agmon}. For $\lambda=0$ (threshold value), the limit is more complex than regular values since the {\it zero is the critical point} ( i.e. $\nabla |\xi|^{2m}=0 $ when $\xi=0$ ), we need to take $s>m$ for $\lambda=0$.

  Secondly, we also show the following  uniform Sobolev estimates in the sense of Kenig-Ruiz-Sogge type:
\begin{equation*}
\norm{|D|^{\alpha} u}_{L^{q}}\lesssim \norm{((-\Delta)^m-z)u}_{L^{p}},\ z\in\C, \ u\in C_0^\infty(\R^n),
\end{equation*}
where $2m-n<\alpha\le 2m-2n/(n+1)$, $1/p-1/q={(2m-\alpha)/n}$, $1<p<2n/(n+1)$ and $2n/(n-1)<q<\infty$. 

Note that when $2m-n<0$ and  $\alpha=0$, the uniform estimates have been proved by  Kenig-Ruiz-Sogge \cite{KRS} for $m=1$ and Huang-Yao-Zheng \cite{HYZ} for $m\ge2$.  When $\alpha\neq 0$, these estimates with derivatives are new  for all $m\ge 2$, which are indispensable to show  the sharp Kato smoothing estimates of $e^{itH}$ with potentials shown in Section \ref{local uniform resolvent estimates} below.

In Section \ref{section 3}, we will prove Theorems \ref{Kato-smoothing} and \ref{theorem_2}. The main point of the proofs is to establish the following uniform resolvent estimates of Kato-Yajima type (i.e. $H$-supersmoothing estimates):
\begin{align*}
\sup_{\lambda\in \R\,\theta\in (0,1]}\big\||x|^{-m+\gamma}|D|^\gamma P_{ac}(H)(H-\lambda\mp i\theta)^{-1}|D|^\gamma |x|^{-m+\gamma}\big\|_{L^2-L^2}<\infty,
\end{align*}
and
\begin{align*}
\sup_{\lambda\in \R\,\theta\in (0,1]}\big\|\<x\>^{-1/2-\epsilon}|D|^{m-1/2}P_{ac}(H)(H-\lambda\mp i\theta)^{-1}|D|^{m-1/2}\<x\>^{-1/2-\epsilon}\big\|_{L^2-L^2}<\infty,
\end{align*}
which in turn is based on the following resolvent formula:
\begin{equation*}
		R_V(z)=R_0(z)-R_0(z)v\big[M(z)\big]^{-1}vR_0(z).
	\end{equation*}
Here $v=\sqrt{|V|}$, $U=\sgn V(x)$ and  $M(z)=U+vR_0(z)v$ will be proved to have a uniformly bounded inverse on $L^{2}(\R^n)$ at the absolutely continuous spectra regime of $H$.

Section \ref{section 4} is devoted to proving Strichartz estimates in Theorem \ref{theorem_4_1}. The proof of Theorem \ref{theorem_4_1} relies on a perturbation method by Rodnianski-Schlag \cite{RoSc} (see also  \cite{BPST2}, \cite{BoMi}). Specifically,  let  $U_H,\Gamma_H$ be homogeneous and inhomogeneous Schr\"odinger evolutions defined by
\begin{align*}
U_Hf=e^{itH}f,\ \Gamma_HF=\int_0^t e^{i(t-s)H}F(s)ds.
\end{align*}
Then they satisfy the following Duhamel formulas
$$
U_H=U_{H_0}+i\Gamma_{H_0}VU_H,\quad \Gamma_H=\Gamma_{H_0}+i\Gamma_{H_0}V\Gamma_{H}=\Gamma_{H_0}+i\Gamma_{H}V\Gamma_{H_0}.
$$
Using these formulas, Sobolev's inequality \eqref{Sobolev_2}, \eqref{eq1.2} in Theorem \ref{Kato-smoothing}  and the same Strichartz estimates as \eqref{lemma_4_2} for $U_{H_0}$ and $\Gamma_{H_0}$, we can finish the proof of Theorem \ref{theorem_4_1}.

In Section \ref{section 5}, we will give the proofs of Theorems \ref{LIP-free case} and \ref{Sobolev estimates} based on many specific presentations of the free resolvent $R_0(z)=((-\Delta)^m-z)^{-1}$. For instance, in order to deal with limiting absorption principle at the zero energy $z=0$ in Theorems \ref{LIP-free case}, the following formula
\begin{equation*}
R_{0}(z)=\frac{1}{mz}\sum_{\ell=0}^{m-1}z_\ell\big(-\Delta-z_\ell\big)^{-1}, \ z_\ell=z^{\frac{1}{m}}e^{i\frac{2\ell\pi}{m}},\ z\in \C\setminus[0,\infty),
\end{equation*}
will be used. Moreover, in the proof of Theorem \ref{Sobolev estimates}, we will use oscillatory integral techniques from Harmonic analysis based on the asymptotic properties of the Fourier transform of the kernel of $R_0(z)$.  These details will be left to the last section.

\subsection{Notations}
In order to state our main results, we will use the following notations.
\begin{itemize}
\item $\<x\>$ stands for $\sqrt{1+|x|^2}$.
\item Let $\C^{\pm}=\{z\in\C; \ \pm\Im z>0\}$ denote the upper and lower complex planes, respectively, and $\overline{\C^{\pm}}$ be the closures of $\C^{\pm}$.
\item Let $\mathbb B(X,Y)$ be the Banach space of bounded operators from $X$ to $Y$,  $\mathbb B(X)=\mathbb B(X,X)$ and $\norm{\cdot}_{X\to Y}:=\norm{\cdot}_{\mathbb B(X,Y)}$. Let $\mathbb B_\infty(X)$ be  the set of compact operators on $X$. 
\item For $p\in [1,\infty]$, $p':=p/(p-1)$ is its H\"older conjugate exponent.
\item For each $s\in\R$, $\H^s(\R^n)$ denotes the $L^2$-based Sobolev space of order $s$ and $L^2_{s}(\R^n)$ denotes the usual weighted space consisting of the function $f$ with $\<x\>^sf\in  L^2(\R^n)$.
\item $\<f,g\>$ denotes the inner product $\int_{\R^n} f(x)\overline{ g(x)}dx$ in $L^2(\R^n)$, as well as the duality couplings $\<\cdot,\cdot\>_{L^{p'},L^{p}}$ and $\<\cdot,\cdot\>_{\H^{-m},\H^m}$.
\item $L^p_tL^q_x:=L^p(\R;L^q(\R^n))$ and $L^p_TL^q_x:= L^p([-T,T];L^q_x(\R^n))$.

\item For positive constants $A,B$, $A\lesssim B$ (resp. $A\gtrsim B$) means that  $A\le cB$ with some constant $c>0$ (resp, $A\ge cB$). $A\sim B$ means $cB\le A\le c'B$ with some $0<c<c'$.

\end{itemize}

\section{Free resolvent estimates for $H_0=(-\Delta)^m$}
\label{section 2}

\subsection{The limiting absorption principle for $R_0(z)$}
Let $H_{0}=(-\Delta)^m$ be the polyharmonic operator on $L^2(\R^n)$, where $m\in \N$, $m\ge2$, $n>2m$ and $\Delta=\sum_{j=1}^n\partial_{x_j}^2$ is the Laplacian.
It is well-known that $H_0$ is self-adjoint on $L^2(\R^n)$ with the domain $\mathcal{D}(H_0)=\H^{2m}(\R^n)$ and the spectrum of $H_0$ is $[0,\infty)$ by the Fourier transform presentation $\widehat{H_0f}(\xi)=|\xi|^{2m}\hat{f}(\xi)$.

For any $z\in \C\setminus[0,\infty)$, the resolvent $R_0(z)=(H_0-z)^{-1}$ is well-defined as a bounded operator on $L^2(\R^n)$  and its operator norm is $$\|(H_0-z)^{-1}\|_{L^2-L^2}=d(z, [0,\infty))^{-1}.$$
Moreover,  the resolvent $R_0(z)$ is analytic on $ z\in\C\setminus[0,\infty)$ in the uniform operator topology of $\mathbb B(L^2(\R^n))$. As $z$ closes to $\lambda\ge0$, it is clear that $R_0(z)$ can not continue into the spectrum point $\lambda\ge 0$ in the uniform operator topology of $\mathbb B(L^2(\R^n))$ (or any weak $L^2$ topology).  The celebrated limiting absorption principle, however, shows that such limits do exist if one considers $R_0(z)$ as an operator function with value in $\mathbb B(L^2_s, L^2_{-s})$ for some $s>0$. Indeed, for any $\lambda>0$ (regular values), we need only choose $s>1/2$. 

For $\lambda=0$ (the threshold value), the limit is more complex than the case with regular values since the zero is the critical point (i.e. $\nabla |\xi|^{2m}=0 $ when $\xi=0$). The studying situation depends on the specific operator.  In the present case (i.e. $H_0=(-\Delta)^m$), we need to take $s>m$ for $\lambda=0$.

In the following two theorems, we will present the limiting absorption principle of $R_0(z)$ and collect several interesting uniform resolvent estimates. The proofs of Theorems \ref{LIP-free case} and \ref{Sobolev estimates} will be given in Section 5 below.

\begin{theorem}\label{LIP-free case}
Let $n>2m$, $m\ge2$ and $H_{0}=(-\Delta)^m$. Consider $R_0(z)=(H_0-z)^{-1}$ as an analytic operator function on $\C\setminus [0,\infty)$ with values in in $\mathbb B(L^2_s, L^2_{-s})$ for some $s>0$. Then the following conclusions hold.
\vskip0.2cm
{\rm (i)} If $s>1/2$, then the following two limits exist for any $\lambda>0$ in the uniform operator topology of $\mathbb B(L^2_s, L^2_{-s})$:
\begin{equation}\label{eq2.1}
\lim_{\C^\pm \ni z\rightarrow \lambda} R_0(z)=R_0^\pm(\lambda),
\end{equation}
where $R_0^\pm(\lambda)$  can be written as follows:\begin{equation}\label{eq2.4}
\<R^\pm_0(\lambda)f,g\>={\rm p.v.}\int_{\R^n}\frac{\hat{f}(\xi)\overline{\hat{g}}(\xi)}{|\xi|^{2m}-\lambda}d\xi\pm \frac{1}{2m}\lambda^{1-2m\over 2m}\pi i\int_{|\xi|=\lambda^{1\over2m}}\hat{f}(\xi)\overline{\hat{g}}(\xi)d\sigma(\xi),
\end{equation}
for $f,\ g \in C_0^\infty(\R^n)$.
In other words, $R_0(z)$ is a uniform-operator-topology continuous function on $\overline{\C^{\pm}}\setminus\{0\}$ with values in $\mathbb B(L^2_s, L^2_{-s})$ for $s>1/2$.
 \vskip0.2cm
{\rm (ii)} If $s>m$, then $R_0(z)$ is an uniform-operator-topology continuous function on $\overline{\C^{+}}$  (also on $\overline{\C^{-}}$) with values in $\mathbb B(L^2_s, L^2_{-s})$, where  we take $R_0(0)=(-\Delta)^{-2m}$ and $R_0(z)=R^{\pm}_0(\lambda)$ for $z=\lambda\in \overline{\C^{\pm}}$ and $\lambda>0$.
\vskip0.2cm
{\rm (iii)} The following uniform estimates for $R_0(z)$ hold:
\begin{equation}\label{eq2.2}
\sup_{z\in\C\setminus[0,\infty)}\big\|R_0(z)\big\|_{L^2_s-L^2_{-s}}\le C_{s,m}<\infty,  \ \ s>m;
\end{equation}
\begin{equation}\label{eq2.3}
\sup_{z\in\C\setminus[0,\infty)}\big\||D|^{m-{1\over 2}}R_0(z)|D|^{m-{1\over 2}}\big\|_{L^2_s-L^2_{-s}}\le C'_{s,m}<\infty,  \ \ s>1/2.
\end{equation}
where $|D|=\sqrt{-\Delta}$, $C_{s,m}$ and $C'_{s,m}$ are the two positive constants independent of $z$.
\end{theorem}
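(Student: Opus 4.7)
The plan is to reduce everything to the classical Agmon limiting absorption principle for the Laplacian $-\Delta$ via the partial fraction decomposition advertised in the introduction. Factoring $w^m - z = \prod_{\ell=0}^{m-1}(w - z_\ell)$ with $z_\ell = z^{1/m}e^{i 2\ell\pi/m}$, a residue computation based on $P'(z_\ell) = m z_\ell^{m-1}$ together with $z_\ell^m = z$ yields the operator identity
\begin{equation*}
R_0(z) = \frac{1}{mz}\sum_{\ell=0}^{m-1} z_\ell\,(-\Delta - z_\ell)^{-1},\qquad z\in\C\setminus[0,\infty).
\end{equation*}
Since $(-\Delta - w)^{-1}$ is continuous from $\overline{\C^\pm}\setminus\{0\}$ into $\mathbb{B}(L^2_s, L^2_{-s})$ for $s>1/2$ by Agmon's classical theorem, the task reduces to transferring this continuity through the singular prefactor $1/z$ and tracking the Plemelj jump.

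For part (i), as $z\to\lambda>0$ from $\C^\pm$, exactly one root, say $z_0$, approaches $\lambda^{1/m}$ from the same half-plane, while the remaining $m-1$ roots stay uniformly bounded away from $[0,\infty)$, so continuity of the second-order resolvent immediately gives \eqref{eq2.1} in $\mathbb{B}(L^2_s, L^2_{-s})$ for $s>1/2$. The Plemelj-Sokhotski identity $(|\xi|^{2m}-\lambda\mp i0)^{-1} = {\rm p.v.}\,(|\xi|^{2m}-\lambda)^{-1}\pm i\pi\delta(|\xi|^{2m}-\lambda)$, combined with $|\nabla(|\xi|^{2m})|=2m|\xi|^{2m-1}=2m\lambda^{(2m-1)/(2m)}$ on the level set $|\xi|^{2m}=\lambda$, produces exactly the coefficient $\frac{1}{2m}\lambda^{(1-2m)/(2m)}$ appearing in \eqref{eq2.4}.

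Part (ii), the extension to $\lambda=0$, is the main obstacle, since $1/z$ diverges while all roots $z_\ell$ vanish simultaneously. The rescue is the orthogonality $\sum_{\ell=0}^{m-1} z_\ell^k = 0$ for $1\le k\le m-1$ and $\sum_\ell z_\ell^m = mz$. Iterating the resolvent identity $m$ times to obtain the exact Taylor-with-remainder formula
\begin{equation*}
(-\Delta - z_\ell)^{-1} = \sum_{j=0}^{m-1} z_\ell^j(-\Delta)^{-(j+1)} + z_\ell^m(-\Delta)^{-m}(-\Delta - z_\ell)^{-1},
\end{equation*}
and substituting it into the partial fraction, the orthogonality kills every $z^{-1}$-singular contribution and leaves the clean identity
\begin{equation*}
R_0(z) = (-\Delta)^{-m} + \frac{1}{m}\sum_{\ell=0}^{m-1} z_\ell\,(-\Delta)^{-m}(-\Delta - z_\ell)^{-1}.
\end{equation*}
The Hardy-Rellich inequality, valid since $n>2m$, gives $(-\Delta)^{-m}\in\mathbb{B}(L^2_s, L^2_{-s})$ for $s>m$, while Agmon's LAP at zero for $-\Delta$ controls $(-\Delta-z_\ell)^{-1}$ uniformly as $z_\ell\to 0$; since the second term carries an explicit factor $z_\ell\to 0$, it vanishes in the limit, proving $R_0(0)=(-\Delta)^{-m}$ and establishing continuity at zero in the weight $s>m$.

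The uniform bound \eqref{eq2.2} then follows by combining (i), (ii), and the trivial estimate $\|R_0(z)\|_{L^2\to L^2}\le d(z,[0,\infty))^{-1}$ for $z$ away from the spectrum. The sharp Kato-Yajima-type estimate \eqref{eq2.3} with $|D|^{m-1/2}$ on both sides requires a separate Fourier-analytic argument: its symbol $|\xi|^{2m-1}/(|\xi|^{2m}-z)$ already vanishes at the origin (so the threshold obstruction disappears) and decays like $|\xi|^{-1}$ at infinity, leaving only the resonance surface $|\xi|=(\Re z)^{1/(2m)}$ to analyze. I would handle this via the coarea formula $\int_{\R^n}=\int_0^\infty\!\int_{|\xi|=r}d\sigma\,dr$ combined with the uniform trace (restriction) estimate $\|\hat f|_{|\xi|=r}\|_{L^2(S^{n-1}_r)}\lesssim r^{(n-1)/2}\|\<x\>^s f\|_{L^2}$ for $s>1/2$; after rescaling, the weight $|\xi|^{2m-1}$ balances the denominator on the resonance surface, yielding the uniform bound. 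Alternatively, one can invoke the sharp Kato-Yajima smoothing estimate for $(-\Delta)^m$ of Ruzhansky-Sugimoto and convert it to resolvent form through Kato's smooth perturbation theory.
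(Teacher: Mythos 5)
The algebraic identity you derive for part (ii),
\begin{equation*}
R_0(z) = (-\Delta)^{-m} + \frac{1}{m}\sum_{\ell=0}^{m-1} z_\ell\,(-\Delta)^{-m}(-\Delta - z_\ell)^{-1},
\end{equation*}
is correct (indeed the remainder has Fourier symbol $z\,|\xi|^{-2m}(|\xi|^{2m}-z)^{-1}$). However, the key step where you claim the remainder vanishes as $z\to 0$ has a genuine gap. You argue that $(-\Delta)^{-m}\in\mathbb B(L^2_s,L^2_{-s})$ for $s>m$ and that the Laplacian resolvent $(-\Delta-z_\ell)^{-1}$ is controlled uniformly near $z_\ell=0$ by Agmon's LAP, so the explicit prefactor $z_\ell$ forces the term to zero. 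But these two boundedness statements live on mismatched weighted spaces and cannot simply be composed: $(-\Delta-z_\ell)^{-1}$ maps $L^2_s\to L^2_{-s}$, while $(-\Delta)^{-m}$ needs a positively weighted input space. More decisively, the composite $(-\Delta)^{-m}(-\Delta-z_\ell)^{-1}$ is \emph{not} uniformly bounded in $\mathbb B(L^2_s,L^2_{-s})$ as $z_\ell\to 0$: its symbol near $\xi=0$ behaves like $-z_\ell^{-1}|\xi|^{-2m}$, so the norm scales like $|z_\ell|^{-1}$. The factor $z_\ell$ in front exactly cancels this blow-up — producing a remainder of size $O(1)$, not $o(1)$ — so your argument does not establish continuity at $z=0$. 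This cancellation is precisely where the nontrivial analysis lies: one must show that although the symbol of $R_0(z)-(-\Delta)^{-m}$ is of size $\sim |z|^{-1}$ on the set $|\xi|\sim |z|^{1/(2m)}$, the weighted operator norm is nonetheless $O(|z|^\epsilon)$ for $s>m$. The paper handles this by invoking the kernel expansion $R_0(z)(x,y)=c_n|x-y|^{2m-n}+E(z,x,y)$ with $\|E(z)\|_{L^2_\sigma\to L^2_{-\sigma}}=O(|z|^\epsilon)$ (citing \cite[Proposition 2.4]{FSWY}), establishing continuity first in the weak, then the strong, and finally the uniform operator topology via a compactness and contradiction argument.

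Two smaller points: for estimate \eqref{eq2.2}, combining (i) and (ii) with the off-spectrum bound is not quite enough on its own — you also need the high-energy decay estimate $\|R_0(z)\|_{L^2_s\to L^2_{-s}}=O(|z|^{(1-2m)/(2m)})$ to control the regime $\lambda\to\infty$; the paper instead deduces \eqref{eq2.2} directly from the uniform Sobolev inequality via H\"older. For \eqref{eq2.3}, the paper cites Agmon's differential-operator lemma (estimate \eqref{eq2.9}), which is cleaner than the coarea/trace balancing you sketch; your alternative of converting the Ruzhansky--Sugimoto smoothing estimate back to resolvent form via Kato's equivalence is valid but indirect.
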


\begin{remark}
\label{remark 2.1}For Theorem \ref{LIP-free case} above, we  make several comments as follows:

\begin{itemize}

\item The statement (i) is actually well-known due to Agmon \cite{Agmon}, where general higher-order elliptic operators $P(D)$ or the differential operators of principal type were considered.  Note that by Stone's formula, we have
 \begin{equation}\label{eq2.4'}\<E'_{H_0}(\lambda)f,g\>=\frac{1}{2\pi i}\<(R^+_0(\lambda)-R^-(\lambda))f,g\>,\ \ \lambda>0,\end{equation}
where $E'(\lambda)$ is the spectral measure density of $H_0$. Hence for any $s>1/2$,  it follows by the trace lemma that (see e.g. \cite{Ben-Kla}):
\begin{equation}\label{eq2.4''}
|\<E'_{H_0}(\lambda)f,f\>|=\frac{1}{2m}\lambda^{{1-2m\over 2m}}\int_{|\xi|=\lambda^{1\over2m}}|\hat{f}(\xi)|^2d\sigma\lesssim\min(\lambda^{{2s-2m\over 2m}}, \lambda^{{1-2m\over 2m}})\|f\|^2_{L^2_s}.
\end{equation}
\vskip0.2cm
\item In the statement (ii), the main point is to prove that $R_0(z)$ is continuous at $z=0$ in the cost of higher weight index. We will adopt the arguments from Agmon \cite{Agmon} to give the proof of (ii) in Section \ref{section 5}, where we use the following resolvent decomposition formula:
\begin{equation}\label{deompformula}
R_{0}(z)=\big((-\Delta)^{m}-z\big)^{-1}=\frac{1}{mz}\sum_{\ell=0}^{m-1}z_\ell\big(-\Delta-z_\ell\big)^{-1},
\end{equation}
where $z_\ell=z^{\frac{1}{m}}e^{i\frac{2\ell\pi}{m}}, z\in \C\setminus[0,\infty)$. This can make us to use the resolvent kernel of $-\Delta$ to present the kernel of $R_0(z)$ for $z\neq0$. Moreover, note that $\|(-\Delta-z)\|_{L^2_s-L^2_{-s}}=O(|z|^{-1/2})$ for any $s>1/2$ as $|z|\rightarrow \infty$ (see. e.g. \cite[p. 59]{KK12}), we then immediately deduce the following high energy decay estimates from the formula \eqref{deompformula}:
 \begin{equation}\label{highenergy}
\|R_{0}(z)\|_{L^2_s-L^2_{-s}}\le C_{s,m,\delta}\ |z|^{(1-2m)/2m},\ \ |z|\ge\delta>0,\ s>1/2,
\end{equation}
where $C_{s,m,\delta}$ is a positive constant depending on $s, m, \delta$.
\vskip0.2cm
\item In the statement (iii), the two uniform estimates \eqref{eq2.2} and \eqref{eq2.3}  play critical roles in the present paper. In fact, by the analytic interpolation, we can deduce from these estimates that $\<x\>^{-m-\epsilon+\gamma}|D|^{\gamma}$ is $H$-supersmooth in the sense of Kato-Yajima(see \cite{KaYa}):
    \begin{equation}\label{eq2.5}
\sup_{z\in\C\setminus[0,\infty)}\big\|\<x\>^{-m-\epsilon+\gamma}|D|^{\gamma}\ (H_0-z)^{-1}\ |D|^{\gamma}\<x\>^{-m-\epsilon+\gamma}\big\|_{L^2-L^2}<\infty,
\end{equation}
for any $0\le \gamma\le m-\frac{1}{2}$ and $\epsilon>0$, which immediately implies the local decay and Kato smoothing estimates of $e^{itH_0}$, that is,
\begin{equation}
\label{eq2.6}
\norm{\<x\>^{-m-\epsilon+\gamma}|D|^\gamma e^{itH_0}\psi_0}_{L^2_tL^2_x}\lesssim \norm{\psi_0}_{L^2_x}.\end{equation}
 In particular, as $\gamma=0$,  the inequality \eqref{eq2.6} becomes the local decay estimate.
\end{itemize}
\end{remark}

\begin{remark} The further remarks about the estimates \eqref{eq2.2} and \eqref{eq2.3} are given as follows:
\vskip0.2cm
\begin{itemize}

\item The inequality \eqref{eq2.2} can be deduced from the following uniform Sobolev estimate:
\begin{equation}
\label{eq2.7}
\norm{u}_{L^{\frac{2n}{n-2m}}}\lesssim \norm{(H_0-z)u}_{L^{\frac{2n}{n+2m}}},\ z\in\C, \ u\in C_0^\infty(\R^n),
\end{equation}
which was proved by Kenig-Ruiz-Sogge \cite{KRS} for $m=1$ and \cite{HYZ} for $m\ge2$. In fact, if $s>m$, then it follows by H\"older's inequality that
\begin{equation}
\label{eq2.8}
\norm{\<x\>^{-s} R_0(z)\< x\>^{-s}}_{L^2-L^2} \le \norm{\<x\>^{-s}}_{L^{n/m}}^2\norm{R_0(z)}_{L^{{2n\over n+2m}}-L^{{2n\over n-2m}}}<\infty
\end{equation}
uniformly in $z\in \C\setminus[0,\infty)$, which gives the desired estimate \eqref{eq2.2}. Moreover, by means of the real interpolation theory, the estimate \eqref{eq2.7} can be refined to the estimate
$$
\norm{u}_{L^{\frac{2n}{n-2m},2}}\lesssim \norm{(H_0-z)u}_{L^{\frac{2n}{n+2m},2}}
$$
where $L^{p,q}(\R^n)$ is the Lorentz space (see \cite{Gra}). Since $|x|^{-m}\in L^{n/m, \infty}$, by the weak H\"older inequality (see \cite{BeLo}) and \eqref{eq2.7},  we have the following sharp uniform estimates:
\begin{equation}
\label{eq2.8'}
\sup_{z\in \C\setminus[0,\infty)}\big\||x|^{-m} R_0(z)|x|^{-m}\big\|_{L^2-L^2} \lesssim \sup_{z\in \C\setminus[0,\infty)}\norm{R_0(z)}_{L^{{2n\over n+2m},2}-L^{{2n\over n-2m},2}}<\infty.
\end{equation}
\vskip0.2cm
\item  The inequality \eqref{eq2.3} is due to Agmon \cite[Lemma A.2]{Agmon}, where he actually showed that for any $s>1/2$ and $u\in C_0^\infty(\R^n)$,
\begin{equation}
\label{eq2.9}
\int (1+|x_j|^2)^{-s}|P^j(D)u|^2 dx \le C_{m, s}\int (1+|x_j|^2)^{s}|(P(D)u|^2dx,
\end{equation}
where $P(D)$ is any differential operator of order $2m$  and $P^j(\xi)=\frac{\partial}{\partial \xi_j}P(\xi)$ for $j=1,2,\cdots,n$. Taking $P(D)=(-\Delta)^{m}-z$, then the desired inequality \eqref{eq2.3} immediately follows from the \eqref{eq2.9} above.
\end{itemize}
\end{remark}

Besides of the special uniform Sobolev estimates \eqref{eq2.7}, we actually can show the following uniform Sobolev estimates with derivatives, which will play key roles in sharp Kato smoothing estimates of $e^{itH}$ with potentials shown in Section \ref{local uniform resolvent estimates}.

\begin{theorem}
\label{Sobolev estimates}
Let $H_0=(-\Delta)^m$ with $n>2m$ and $R_0(z)=(H_0-z)^{-1}$ for $z\in \C\setminus[0,\infty)$. Then for any $2m-n<\alpha\le 2m-2n/(n+1)$,  the following uniform estimates hold:
\begin{equation}
\label{eq2.10}
\norm{|D|^{\alpha} u}_{L^{q}}\lesssim \norm{(H_0-z)u}_{L^{p}},\ z\in\C, \ u\in C_0^\infty(\R^n),
\end{equation}
where $1/p-1/q={(2m-\alpha)/n}$, $1<p<2n/(n+1)$ and $2n/(n-1)<q<\infty$. Moreover, the following uniform $L^p$-$L^q$ resolvent estimates hold:
 \begin{equation}
\label{eq2.11}
\sup_{z\in \C\setminus[0,\infty)}\big\||D|^{\alpha/2} R_0(z)|D|^{\alpha/2}\big\|_{L^p-L^{q}}=\big\||D|^{\alpha} R_0(z)\big\|_{L^p-L^{q}}<\infty.
\end{equation}
\end{theorem}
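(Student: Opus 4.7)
First note that \eqref{eq2.11} is merely a reformulation of \eqref{eq2.10}: since $|D|^{\alpha/2}$ and $R_0(z)$ commute as Fourier multipliers, $|D|^{\alpha/2}R_0(z)|D|^{\alpha/2}=|D|^\alpha R_0(z)$, so \eqref{eq2.10} is equivalent to $\||D|^\alpha R_0(z)\|_{L^p\to L^q}\lesssim 1$. The dilation $u\mapsto u(\lambda\,\cdot)$ shows that this inequality is scale-invariant precisely under $1/p-1/q=(2m-\alpha)/n$, so taking $\lambda=|z|^{1/(2m)}$ reduces matters to the case $|z|=1$, with $z$ in either half-plane and including the limits $z\to 1\pm i0$. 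It remains to bound the Fourier multiplier $m_z(\xi)=|\xi|^\alpha/(|\xi|^{2m}-z)$ from $L^p$ to $L^q$ uniformly for $|z|=1$. I decompose $m_z=m_z^{\mathrm{low}}+m_z^{\mathrm{mid}}+m_z^{\mathrm{high}}$ via a smooth Littlewood--Paley partition adapted to the critical sphere $|\xi|=1$ and treat each piece separately.

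For the high-frequency piece I factor $m_z^{\mathrm{high}}(\xi)=\bigl[(1-\psi(\xi))|\xi|^{2m}/(|\xi|^{2m}-z)\bigr]\cdot|\xi|^{\alpha-2m}$. The bracketed factor is smooth and satisfies the H\"ormander--Mikhlin multiplier condition uniformly in $|z|=1$, while the Riesz factor $|D|^{\alpha-2m}$ is bounded $L^p\to L^q$ by the Hardy--Littlewood--Sobolev inequality since $0<2m-\alpha<n$ by the hypothesis $\alpha>2m-n$. For the low-frequency piece, $m_z^{\mathrm{low}}$ is compactly supported in $\xi$ with a $|\xi|^\alpha$ singularity at the origin; a standard calculation shows that its inverse Fourier transform is smooth on compacta and decays like $|x|^{-n-\alpha}$ at infinity, hence lies in $L^r$ for $r>n/(n+\alpha)$, and Young's inequality (with $1/r=1-(2m-\alpha)/n$) yields the bound, the condition $r>1$ again being $\alpha>2m-n$.

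The delicate piece is $m_z^{\mathrm{mid}}$, localized near the singular sphere. Passing to the limit $z\to 1\pm i0$, the Plemelj--Sokhotski formula expresses $m_z^{\mathrm{mid}}$ as a principal value plus a multiple of $\chi(\xi)\delta(|\xi|-1)$. The delta contribution is the extension operator $f\mapsto\int_{S^{n-1}}\chi(\xi)e^{ix\cdot\xi}\hat f(\xi)\,d\sigma(\xi)$, which is bounded from $L^{2(n+1)/(n+3)}$ to $L^{2(n+1)/(n-1)}$ by the Stein--Tomas restriction theorem (with scaling $1/p-1/q=2/(n+1)$). The hypothesis $\alpha\le 2m-2n/(n+1)$ is exactly $(2m-\alpha)/n\ge 2/(n+1)$, so interpolation with either the trivial $L^1\to L^\infty$ estimate or a Riesz potential gain provides the required $L^p\to L^q$ bound on the whole prescribed range of $(p,q)$. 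The principal-value part is treated by the same mechanism: its kernel has oscillatory asymptotics $\sim|x|^{-(n-1)/2}e^{\pm i|x|}$ obtained by stationary phase on the sphere, and a $TT^*$ or bilinear Stein--Tomas argument yields the analogous $L^p\to L^q$ bound.

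The main obstacle will be the middle-frequency step: extracting the $|x|^{-(n-1)/2}$ oscillatory kernel asymptotics of $m_z^{\mathrm{mid}}(D)$ with constants uniform as $z$ approaches the unit circle from either side, and then matching the Stein--Tomas endpoint $\alpha=2m-2n/(n+1)$ to the continuous open range $2m-n<\alpha<2m-2n/(n+1)$ via interpolation or composition with Riesz potentials.
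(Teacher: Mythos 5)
Your proposal is essentially the same proof as the paper's, just organized differently. You split the multiplier into low/mid/high frequency pieces and pass to the Plemelj--Sokhotski limit to separate the mid piece into a delta (extension operator) part plus a principal value, invoking Stein--Tomas for the former and "a $TT^*$ or bilinear Stein--Tomas argument" for the latter. The paper instead keeps the $\varepsilon$-regularization ($z=(1+i\varepsilon)^{2m}$) explicit, splits the kernel into a near-sphere piece $K_1$ and an away piece $K_2$ (your low and high pieces combined), and after writing $K_1$ in polar coordinates and applying the stationary phase asymptotics for $\widehat{d\sigma}$, arrives at the same oscillatory kernel $|x|^{-(n-1)/2}e^{\pm i|x|}$, which it then estimates via a dyadic decomposition and the Carleson--Sj\"olin oscillatory integral theorem, interpolating with the crude $(1+|x|)^{-(n-1)/2}$ bound to sweep out the off-dual-line pairs, and handling the boundary line $1/p-1/q=2/(n+1)$ by weak-type endpoint estimates and real interpolation (citing \cite{Gut}, \cite{HZ}). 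Both routes rely on the identical core oscillatory integral input; the paper's $\varepsilon$-uniform kernel analysis is a bit more hands-on about the uniformity you flag as "the main obstacle," while your version makes the Stein--Tomas connection more transparent. The one place your sketch glosses over real work is precisely the principal-value piece: convolution with $|x|^{-(n-1)/2}e^{\pm i|x|}a(x)$ is not a direct $TT^*$ corollary of Stein--Tomas but requires the full Carleson--Sj\"olin machinery (or the equivalent dyadic oscillatory-integral argument the paper carries out), so naming it as such is accurate but should be spelled out to close the proof.
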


\vskip0.4cm
\begin{remark}
\label{remark_1}
For any $\alpha,p,q$ satisfying the conditions in Theorem \ref{Sobolev estimates}, one can find $p_0,p_1,$ $q_0,q_1\in (1,\infty)$ satisfying  $p_0<p<p_1$, $q_0<q<q_1$ and the conditions in Theorem \ref{Sobolev estimates}. The real interpolation theory (see \cite{BeLo})  implies that the estimate \eqref{eq2.11} still holds, where $L^p,L^q$ replaced by the Lorentz spaces $L^{p,2},L^{q,2}$, respectively. 

By virtue of the continuous embedding $L^p\subset L^{p,2},L^{q,2}\subset L^q$, this gives a slightly stronger estimate than \eqref{eq2.11} which will be used in proving Theorem \ref{local H-smoothing} below. We refer to \cite[Appendix C]{MiYa1} where the real interpolation theorems and basic properties of Lorentz spaces used in the present paper have been summarized.
\end{remark}

\begin{remark}
Some further comments on Theorem \ref{Sobolev estimates} are given as follows

\begin{itemize}
 \item Besides of the estimate \eqref{eq2.10} above, there exist actually more pairs $(p,q)$  such that the following Sobolev type estimates hold:
   \begin{equation}
\label{eq2.10'}
\norm{|D|^{\alpha} u}_{L^{q}}\lesssim |z|^{\frac{n}{2m}(\frac{1}{p}-\frac{1}{q})-\frac{2m-\alpha}{2m}}\norm{((-\Delta)^m-z)u}_{L^{p}},\ z\in\C\setminus\{0\},
\end{equation}
for  $2m-n<\alpha \le 2m-2n/(n+1)$ and
\begin{equation}\label{eq2.10''}
\frac{2}{n+1}\le \frac1p-\frac1q\le \frac{2m-\alpha}{n}<1,\quad 1< p<\frac{2n}{n+1},\quad \frac{2n}{n-1}<q<\infty.
\end{equation}
Clearly, the range of $\alpha$ is exactly from the conditions \eqref{eq2.10''}. When $\alpha=0$, the estimates \eqref{eq2.10'} have been proved by \cite{KRS} for $m=1$ and \cite{HYZ} for $m\ge2$.  

When $\alpha\neq 0$, these estimates \eqref{eq2.10'} are new  for all $m\ge 1$, which are indispensable to show  the sharp Kato smoothing estimates of $e^{itH}$ with potentials shown in Section \ref{local uniform resolvent estimates} below. In particular, note that uniform Sobolev estimates with $\alpha=0$ above have played a crucial role in the unique continuity of elliptic equations and the $L^p$-multiplier estimates (see e.g. \cite{KRS}, \cite{HYZ}), as well as in recent developments on Lieb-Thirring type inequalities (see e.g. \cite{Fra1}, \cite{Fra2}, \cite{Miz1} and reference therein). Hence, the authors believe that the estimates \eqref{eq2.10}  and \eqref{eq2.10'} with $\alpha \neq 0$ would potentially have more further applications.
\vskip0.2cm

\item  The range $2m-n<\alpha \le 2m-2n/(n+1)$ in Theorem \ref{Sobolev estimates} is optimal. Indeed, on the one hand, if the inequality \eqref{eq2.10} hold for $\alpha=2m-n$ , then we only have the pair $(p,q)=(1, \infty)$ satisfying the estimate \eqref{eq2.10}, which is impossible since the classical embedding estimate $\|u\|_{L^\infty}\lesssim \||D|^n u\|_{L^1}$ does not hold (if taking $z=0$ in \eqref{eq2.10}). One the other hand, if the inequality \eqref{eq2.10} hold for $\alpha>2m-2n/(n+1)$, then we can choose some $p_0>2(n+1)/(n+3)$ such that
   $$\sup_{z\in \C\setminus[0,\infty)} \||D|^{\alpha} R_0(z)\big\|_{L^{p_0}-L^{p_0'}}<\infty,$$ which leads to the following boundary resolvent estimate as $z\rightarrow \lambda$,
   \begin{equation}\label{eq2.11'}
   \big|\<|D|^{\alpha} R_0^\pm(\lambda)f,g\>\big|\lesssim \|f\|_{L^{p_0}}\|g\|_{L^{p_0}}, \ \ f,\ g\in C_0^\infty(\R^n).
   \end{equation}
   In view of the formula \eqref{eq2.4} of $R^{\pm}(\lambda)$, if $\lambda=1$, then the  \eqref{eq2.11'} gives the Fourier restriction estimate:
   \begin{equation}\label{eq2.11''}\int_{S^{n-1}}|\hat{f}(\xi)|^2d\sigma\lesssim\big|\<|D|^{\alpha} (R_0^+(1)-R_0^-(1))f,f\>\big|\lesssim\|f\|^2_{L^{p_0}}, \end{equation}
   for $p_0>2(n+1)/(n+3)$. This contradicts with the famous Stein-Tomas theorem (see e.g. \cite{Gra} ), which says that Fourier restriction operator $$\mathfrak{R}: f\in L^p(\R^n)\mapsto \widehat{f}\mid_{S^{n-1}}\in L^2(S^{n-1}),$$ is bounded if only if $1\le p\le 2(n+1)/(n+3)$.
  \vskip0.2cm

\item If $q=p'$, since $|x|^{-m+\gamma}\in L^{n/(m-\gamma),\infty}$, we learn by weak H\"older's inequality (see \cite{BeLo}) that
\begin{equation}
\label{eq2.8''}
\big\||x|^{-m+\gamma}|D|^{\gamma} R_0(z)|D|^{\gamma}|x|^{-m+\gamma}\big\|_{L^2-L^2} \lesssim \big\||D|^\gamma R_0(z)|D|^\gamma \big\|_{L^{p,2}-L^{p',2}},
\end{equation}
if  $1/p-1/p'=2(m-\gamma)/n$. Hence for any $m-n/2<\gamma\le m-n/(n+1)$, it follows immediately from the estimate \eqref{eq2.11} and Remark \eqref{remark_1} that the uniform estimates hold:
\begin{equation}
\label{eq2.8'''}
\sup_{z\in \C\setminus[0,\infty)}\big\||x|^{-m+\gamma}|D|^{\gamma} R_0(z)|D|^{\gamma}|x|^{-m+\gamma}\big\|_{L^2-L^2}<\infty.
\end{equation}
However, we remark that the range of $\gamma$ above is not sharp.  Actually, the optimal range of $\gamma$ such that the \eqref{eq2.8'''} holds is $(m-n/2, m-1/2)$ (see e.g. Kato-Yajima \cite{KaYa},  Ruzhansky and Sugimoto \cite{RuSu} ).
\end{itemize}
\end{remark}

\subsection{The compactness of $\<x\>^{-s} R_0(z)\< x\>^{-s}$  }

In the subsection, we first prove that operators  $\<x\>^{-s} R_0(z)\< x\>^{-s}$ is compact on $L^2(\R^n)$ for any $z\in \C\setminus[0,\infty)$ for any $s>0$, then by the continuity in Theorem \ref{LIP-free case}, we can extend the compactness to the boundary value operators  $\<x\>^{-s} R^{\pm}_0(\lambda)\< x\>^{-s}$ for  sufficiently large $s$.

Let $s>0$, since $\<x\>^{-s}$ is bounded, it is enough to prove that $\<x\>^{-s} R_0(z)$ is compact for any $s>0$. Moreover, by virtue of the resolvent formula $$R_0(z)=R_0(z')-(z-z')R_0(z')R_0(z),\quad z, z'\in \C\setminus[0,\infty),$$ it hence suffices to show that $\<x\>^{-s} (1+H_0)^{-1}$ is compact on $L^2(\R^n)$ for any $s>0$. Indeed, since the bounded functions $f(x)=\<x\>^{-s}$ and $g(x)=(1+|x|^{2m})^{-1}$ both decay to $0$ as $|x|\rightarrow\infty$, it hence follows that the operator $f(X)g(D)=\<x\>^{-s}(H_0+1)^{-1}$ is a compact operator of $\mathbb B(L^2)$ (see e.g. Simon \cite[p. 160]{Simon}).  Thus, by Theorem \ref{LIP-free case} (ii) and the closeness of the family of compact operators $\mathbb B_\infty(L^2)$ in $\mathbb B(L^2)$, we immediately conclude the following result:
\vskip0.2cm
\begin{theorem}
\label{compactness}
If $s>m$, then $z\mapsto\<x\>^{-s} R_0(z)\<x\>^{-s}$ is an uniform-operator-topology continuous function on $\overline{\C^{+}}$  (also on $\overline{\C^{-}}$) with compact operator values in $\mathbb B_\infty(L^2)$, where  we set $R_0(0)=(-\Delta)^{-2m}$ and $R_0(z)=R^{\pm}_0(\lambda)$ as $z=\lambda\in \overline{\C^{\pm}}$ and $\lambda>0$.
\end{theorem}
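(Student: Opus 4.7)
The plan is to combine the continuity statement from Theorem \ref{LIP-free case} (ii) with a compactness result for a single, off-spectrum value of $z$, and then use the fact that $\mathbb{B}_\infty(L^2)$ is norm-closed in $\mathbb{B}(L^2)$.

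First I would establish compactness at a convenient point, say $z=-1$. Since $s>m>0$, the multiplication operator by $\<x\>^{-s}$ and the Fourier multiplier $(1+|\xi|^{2m})^{-1}$ are both bounded and both tend to $0$ at infinity on $\R^n$. By the classical criterion that $f(X)g(D)$ is compact on $L^2(\R^n)$ whenever $f,g\in L^\infty$ decay to zero at infinity (see, e.g., Simon \cite[p.~160]{Simon}), the operator
\[
\<x\>^{-s}(H_0+1)^{-1}
\]
is compact, and hence so is $\<x\>^{-s}(H_0+1)^{-1}\<x\>^{-s}$.

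Next, I would transfer this compactness to every $z\in\C\setminus[0,\infty)$ by means of the resolvent identity
\[
R_0(z)=(H_0+1)^{-1}+(z+1)(H_0+1)^{-1}R_0(z).
\]
Multiplying on both sides by $\<x\>^{-s}$, the first term is compact by the previous step, and the second term factors as
\[
(z+1)\bigl[\<x\>^{-s}(H_0+1)^{-1}\bigr]\bigl[R_0(z)\<x\>^{-s}\bigr],
\]
which is the composition of a compact operator with a bounded operator (the boundedness of $R_0(z)\<x\>^{-s}$ on $L^2$ is automatic for $z$ off the spectrum). Thus $\<x\>^{-s}R_0(z)\<x\>^{-s}\in\mathbb{B}_\infty(L^2)$ for every $z\in\C\setminus[0,\infty)$.

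The final step is to pass to the boundary. By Theorem \ref{LIP-free case} (ii), the map $z\mapsto \<x\>^{-s}R_0(z)\<x\>^{-s}$ extends by continuity from $\C^{\pm}$ to $\overline{\C^{\pm}}$ in the uniform operator topology of $\mathbb{B}(L^2_s,L^2_{-s})$, and since $\<x\>^{\pm s}$ are bounded from $L^2$ to $L^2_{\mp s}$ and $L^2_{\pm s}$ to $L^2$ respectively, this extension is equally continuous with values in $\mathbb{B}(L^2)$. Because $\mathbb{B}_\infty(L^2)$ is closed in $\mathbb{B}(L^2)$ under the operator-norm topology, the boundary values $\<x\>^{-s}R_0^{\pm}(\lambda)\<x\>^{-s}$ for $\lambda>0$, and $\<x\>^{-s}R_0(0)\<x\>^{-s}$, are themselves compact. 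This completes the proof. The main (minor) obstacle is simply checking that the compactness criterion for $f(X)g(D)$ applies with our concrete choices of $f$ and $g$, which is immediate once one notes $s>0$ and $n>2m$; no genuinely hard step arises, as Theorem \ref{LIP-free case} (ii) already did the heavy lifting.
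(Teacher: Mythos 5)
Your proposal is correct and follows essentially the same route as the paper: compactness of $\<x\>^{-s}(H_0+1)^{-1}$ via the $f(X)g(D)$ criterion, extension to all $z\in\C\setminus[0,\infty)$ through the resolvent identity, and then passage to the boundary by combining the uniform-operator-topology continuity from Theorem \ref{LIP-free case}(ii) with the norm-closedness of $\mathbb B_\infty(L^2)$ in $\mathbb B(L^2)$. The only cosmetic difference is that the paper first observes that compactness of the one-sided operator $\<x\>^{-s}R_0(z)$ already suffices, whereas you handle the two-sided weighted operator directly; this is immaterial.
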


Note that, by Theorem \ref{LIP-free case}(i), if $\lambda>0$ then we only need $s>1/2$  to show that the operators $\<x\>^{-s} R_0^\pm(\lambda)\<x\>^{-s}$ are compact. For $\lambda=0$,  the restriction $s>m$ is essentially optimal in the sense that $\<x\>^{-m} R_0(0)\<x\>^{-m}$ is not compact, although it is bounded on $L^2(\R^n)$.

\section{Kato smoothing estimates of $e^{itH}$}
\label{section 3}

In this section, we will show sharp Kato smoothing estimates of $e^{itH}$ with a general bounded decaying potential $V$. Let us first discuss the spectrum of $H=H_0+V$ in Subsection \ref{Spectrum} and then give our main results in Subsection \ref{local uniform resolvent estimates}.
\subsection{The spectrum of $H_0+V$ }
\label{Spectrum}
Let $n>2m$, $H=H_0+V$, $H_0=(-\Delta)^{m}$ and $V(x)$ be a real valued measurable function satisfying $|V(x)|\lesssim \<x\>^{-s} $ for some $s>0$.

 It is well known that $V$ is a relatively compact perturbation of $H_0$ (by using the compactness of $\<x\>^{-s}(1+H_0)^{-1}$ mentioned above). Hence, $H$ is a self-adjoint operator with the same domain as $\mathcal{D}(H_0)=\H^{2\sigma}(\R^n)$ by Kato-Rellich's theorem and its essential spectrum is the same set $[0,\infty)$ as $H_0$ by Weyl's theorem. Moreover, the spectrum located at $(-\infty, 0)$  is only discrete eigenvalues of finite multiplicity with a possible limiting point at zero.

In particular, if $s>1$ (i.e.  the potential $V$ is short-range), then the positive eigenvalues embedding in the essential spectrum $(0,\infty )$ of $H$ are also discrete and of finite multiplicity, as well as their only possible limiting point is zero point (see e.g. Agmon \cite{Agmon}). Now let us sum up some spectral results of $H$ as follows:

\begin{lemma}  \label{lemma_spectrum} Let $n>2m$, $H=(-\Delta)^{m}+V$ and $V(x)$ be a real valued measurable function satisfying $|V(x)|\le C\<x\>^{-s} $ for $s>2m$. Then the following conclusions hold:
\vskip0.2cm
{\rm (i)} The spectrum $\sigma(H)=\{\lambda_1\le \lambda_2\le \cdots\le \lambda_{N_0}<0 \}\cup [0,\infty)$, where each $\lambda_j$ is the negative discrete eigenvalue of $H$ and $N_0$ denotes the number of negative eigenvalues by counting its finite multiplicity, which satisfies  with the following bound:
\begin{equation}
\label{eq3.0}
N_0=|\{\lambda_1\le \lambda_2\le \cdots\le \lambda_{N_0}<0 \}|\le C_{n, m}\int_{\R^n}|V(x)|^{n/2m}dx<\infty.
\end{equation}
Moreover, the positive eigenvalues embedding in $(0,\infty )$ are also discrete and of finite multiplicity, as well their only possible limiting point is zero point. Finally, the singular continuous spectrum is absent.
\vskip0.2cm
{\rm (ii)} If $m=2k\ (k\in \N)$, then there exists an even $C_0^\infty(\R^n)$-potential $V$ such that $H=H_0+V$ has at least one positive eigenvalue.
\vskip0.2cm
{\rm (iii)} If the potential $V$ further satisfies with the repulsive condition $(x\cdot\nabla)V(x)\le 0$, then the point spectrum $\sigma_p(H)=\emptyset$. In particular, $H$ has no any embedding positive eigenvalues.

\end{lemma}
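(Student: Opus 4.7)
My plan is to separate the two qualitatively different statements. Since $|V(x)|\lesssim\<x\>^{-s}$ with $s>2m$ and (as in the preceding subsection) $\<x\>^{-s}(1+H_0)^{-1}$ is compact on $L^2(\R^n)$, $V$ is $H_0$-compact; Weyl's theorem then gives $\sigma_{\mathrm{ess}}(H)=\sigma_{\mathrm{ess}}(H_0)=[0,\infty)$, so the spectrum on $(-\infty,0)$ is purely discrete of finite multiplicity with possible accumulation only at $0$. The bound on $N_0$ is the Cwikel--Lieb--Rozenblum inequality for $(-\Delta)^m$, available since $n>2m$: via the Birman--Schwinger principle $N_0$ is dominated by the number of eigenvalues $\ge 1$ of the compact operator $|V|^{1/2}H_0^{-1}|V|^{1/2}$, which is controlled by $\int|V|^{n/(2m)}dx$ through a weak-Schatten bound on $H_0^{-1/2}|V|^{1/2}$ derived from the Sobolev mapping $H_0^{-1/2}:L^2\to L^{2n/(n-2m)}$. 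For the remaining claims (discreteness with finite multiplicity of positive embedded eigenvalues, their accumulation only at $0$, and absence of singular continuous spectrum), the plan is to invoke the limiting absorption principle for $H$ proved in Section \ref{local uniform resolvent estimates} through a Fredholm analysis of $M(z)=U+vR_0(z)v$ (using Theorem \ref{compactness}), together with the standard Privalov-type argument ruling out singular continuous mass.

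\textbf{Part (ii).} I will exhibit an explicit even compactly supported $V$, deferring the detailed construction to Remark \ref{remark 3.1}. The structural point is the factorization
\begin{equation*}
(-\Delta)^{2k}-\lambda=\big((-\Delta)^k-\sqrt{\lambda}\big)\big((-\Delta)^k+\sqrt{\lambda}\big),\qquad \lambda>0,
\end{equation*}
available precisely when $m=2k$ is even. Starting from a carefully chosen radial even $\phi\in C_0^\infty(\R^n)$, one applies the two factors in turn so that $((-\Delta)^{2k}-\lambda)\phi=-V\phi$ with $V$ an even $C_0^\infty(\R^n)$ potential supported inside $\mathrm{supp}\,\phi$; the smoothness and boundedness of $V$ is ensured by arranging that $((-\Delta)^{2k}-\lambda)\phi$ vanishes at the same order as $\phi$ on the boundary of $\mathrm{supp}\,\phi$, which is delicate but achievable thanks to the freedom in choosing $\phi$ to solve an auxiliary radial ODE on an interior ball.

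\textbf{Part (iii).} The tool is the virial identity associated with the dilation generator $A:=\tfrac12(x\cdot D+D\cdot x)$. The $2m$-homogeneity of $H_0$ under scaling gives $[H_0,iA]=2m H_0$, while a direct commutator computation yields $[V,iA]=-(x\cdot\nabla V)$, so $[H,iA]=2m H_0-(x\cdot\nabla V)$. For any eigenpair $H\phi=\lambda\phi$ with $\phi\neq 0$, self-adjointness of $A$ and $H$ formally force $\<\phi,[H,iA]\phi\>=0$, that is,
\begin{equation*}
2m\<\phi,H_0\phi\>=\<\phi,(x\cdot\nabla V)\phi\>.
\end{equation*}
Under $(x\cdot\nabla V)\le 0$ the right-hand side is $\le 0$, whereas the left-hand side equals $2m\norm{(-\Delta)^{m/2}\phi}_{L^2}^2$, which is strictly positive because the only $L^2$-element in $\Ker H_0$ is $0$ (by Fourier transform). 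The resulting contradiction gives $\sigma_p(H)=\emptyset$, so in particular $H$ carries no embedded positive eigenvalues. The main obstacle is the rigorous justification of this virial identity, since $\phi$ must be placed in a space where both sides make sense. My plan is to regularize $A$ by the bounded operators $A_R:=\chi(\cdot/R)A\chi(\cdot/R)$ for a cut-off $\chi\in C_0^\infty$ equal to $1$ near the origin, verify the quadratic-form identity for $A_R$ by a direct computation, and pass to the limit $R\to\infty$ using polynomial decay of $\phi$ inherited from the eigenvalue equation $(H_0-\lambda)\phi=-V\phi$ and the decay $|V(x)|\lesssim\<x\>^{-s}$ with $s>2m$ (invoking, if necessary, Froese--Herbst-type exponential decay at positive embedded eigenvalues).
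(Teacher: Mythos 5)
The paper itself does not give a self-contained proof of this lemma: Remark \ref{remark 3.1} simply cites Birman--Solomyak \cite{BS} for the CLR bound \eqref{eq3.0}, Agmon \cite{Agmon} (via the short-range hypothesis $s>2m>1$) for the discreteness of embedded positive eigenvalues and absence of singular continuous spectrum, and \cite[Section 7]{FSWY} for parts (ii) and (iii), while supplying only the sketch of the embedded-eigenvalue example. Your parts (i) and (iii) are in line with those sources. For (i), the Weyl/Birman--Schwinger/Fredholm roadmap matches the paper's intent, although note that the LAP of Theorem \ref{local H-smoothing} is proved \emph{under} the assumption of no positive eigenvalues, so you cannot quote that theorem's conclusion here; rather, it is the analytic Fredholm property of $M(z)=I+wR_0(z)v$ combined with the compactness from Theorem \ref{compactness} (Agmon's argument) that yields discreteness, not the theorem itself. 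For (iii), the virial/dilation-generator argument with $[H,iA]=2mH_0-(x\cdot\nabla V)$ is exactly the mechanism; the justification must use the rapid decay \eqref{eq3.4} (obtained from Agmon--H\"ormander, as the paper does, rather than Froese--Herbst), but that is a cosmetic difference.

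Part (ii) contains a genuine gap. You propose to find a \emph{compactly supported} eigenfunction $\phi\in C_0^\infty(\R^n)$ and then set $V=-\phi^{-1}\bigl((-\Delta)^{2k}-\lambda\bigr)\phi$, claiming boundedness of $V$ can be arranged by matching vanishing orders on $\partial\,\supp\phi$. This cannot be made to work: for a $C_0^\infty$ cutoff $\phi$ the quantity $D^\beta\phi/\phi$ blows up as one approaches $\partial\,\supp\phi$ (e.g.\ for $\phi=e^{-1/t}$, $t=1-|x|^2$, one has $\phi''/\phi\sim t^{-4}\to\infty$), so $V$ is necessarily unbounded near the boundary of the support, regardless of how $\phi$ is tuned. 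The paper's construction avoids this entirely by taking the \emph{eigenfunction} to be globally defined and only exponentially decaying (not compactly supported), namely a smooth positive $\phi$ with $\phi(x)=|x|^{-1}e^{-|x|}$ for $|x|>\delta$ in $n=3$ (Bessel kernel of $(1-\Delta)^{-1}$ in general $n$), so that $(-\Delta)\phi=-\phi$ and hence $(-\Delta)^{2k}\phi=\phi$ outside $B(0,\delta)$; then $V=\phi^{-1}\bigl(\phi-(-\Delta)^{2k}\phi\bigr)$ is automatically $C_0^\infty$ because the numerator vanishes identically outside $B(0,\delta)$ and $\phi>0$ everywhere. It is the \emph{potential} $V$, not the eigenfunction, that is compactly supported. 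Your factorization observation $( -\Delta)^{2k}-\lambda=\bigl((-\Delta)^k-\sqrt{\lambda}\bigr)\bigl((-\Delta)^k+\sqrt{\lambda}\bigr)$ does point to the right mechanism (the evenness of $m$ lets one exploit the exponentially decaying factor $(-\Delta+1)$), but the subsequent step with $\phi\in C_0^\infty$ should be replaced by the paper's exponentially decaying $\phi$.
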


\begin{remark}
\label{remark 3.1}Some remarks are given as follows:

\begin{itemize}
\item In the case of $m=1$
the number estimate of negative eigenvalues \eqref{eq3.0} is well-known and usually called by {\it Cwickel-Lieb-Rozenbljum bound}. see e.g. Simon \cite[p. 674]{Simon}. For the cases $m\ge 2$, it is due to Birman and Solomyak \cite{BS}.  The statements (ii ) and (iii) have been proved in \cite[Section 7]{FSWY}. In particular, the conclusion (iii) also works for much general elliptic operator $P(D)+V$ and fractional operator $(-\Delta)^s+V$ with $s>0$.
\vskip0.2cm
\item For even $m\ge2$, there exist higher-order Schr\"{o}dinger type operators $H=(-\Delta)^m+V$ with a positive eigenvalue embedded in the continuous spectrum, even for $C_{0}^{\infty}$-potentials. Indeed, if we have a strictly positive smoothing function $\phi$ such that $\phi(x)=(-\Delta)^m\phi(x)$ for $|x|>\delta>0$, then the potential
	\begin{equation}\label{V}
		V(x)=\phi^{-1}(x)\big(\phi(x)-(-\Delta)^m\phi(x)\big).
	\end{equation}
	has compact support in $ B(0, \delta)$ and satisfies $(-\Delta)^m\phi+V\phi=\phi$ (i.e. $1$ is an eigenvalue of $H$).

 For instance, in $n=3$,  since
		\begin{equation}
			\Delta(r^{-1}e^{-r})=\Big(\frac{d^{2}}{dr^{2}}+\frac{2}{r}\frac{d}{dr}\Big)(r^{-1}e^{-r})=r^{-1}e^{-r},\ \ r=|x|>\delta>0.
		\end{equation}
so we can define a radial function $\phi(x)>0$ such that $\phi(x)=|x|^{-1}e^{-|x|}$ for $|x|>\delta>0$, which clearly satisfies $\phi(x)=(-\Delta)^m\phi(x)$ for $|x|>\delta>0$.  Thus by \eqref{V}, we construct a potential $V(x)\in C_{0}^{\infty}(\R^{3})$ such that $((-\Delta)^m+V)\phi=\phi$ for each even integer $m\ge 2$. For any other dimension $n$, we also obtain the same results if choosing the Bessel kernel function $G(|x|)$ of $(1-\Delta)^{-1}$, instead of $|x|^{-1}e^{-|x|}$ (see e.g. \cite[Section 7]{FSWY} for more details and references therein).
\end{itemize}
\end{remark}

%

\subsection{Uniform resolvent estimates and Kato smoothing estimates }
\label{local uniform resolvent estimates}
In the subsection, we will show the following local $H$-supersmoothing estimates, which imply Kato smoothing estimates of $e^{itH}$.  For the end, let us give the definition of zero resonance of $H$.
\begin{definition}
\label{resonace}
Zero is said to be a resonance of $H$ if there exists some $$0\neq \psi\in \bigcap_{\sigma>m} L^{2}_{-\sigma}(\R^n)\setminus L^2(\R^n)$$  such that $ (-\Delta)^m\psi+V\psi=0$ in the distributional sense. In particular, we say that zero is an eigenvalue of $H$ if $\psi\in L^2(\R^n)$.
\end{definition}
Note that under the assumptions on $V=O(\<x\>^{-s})$ for $s>2m$ with $n>2m$, the solution $\psi\in \bigcap_{\sigma>m} L^{2}_{-\sigma}(\R^n)$ which satisfies with $ (-\Delta)^m\psi+V\psi=0$,  must belong to the smaller space $\bigcap_{\sigma>2m-\frac{n}{2}} L^{2}_{-\sigma}(\R^n)$ if $2m<n\le 4m$ and be an eigenfunction in $L^{2}(\R^n)$ if $n>4m$. This means that zero is not a resonance and only is a possible eigenvalue if $n>4m$. In particular, as $m=1$, these situations exactly return to the case of Schr\"odinger operator $-\Delta+V$. For more details and comments, one may see Jensen and Kato \cite{JK} for $m=1$ and Feng et al. \cite{FSWY} for $m\ge 2$.
\vskip0.3cm
Let $\mathcal E_\nu=\{\lambda\in \R\ |\ \mathrm{\mathop{dist}}_{\,\R}(\lambda,\sigma_p(H))<\nu\}$ be the $\nu$-neighborhood of $\sigma_p(H)$ in $\R$. If $H$ has no eigenvalues, we set $\mathcal E_\nu=\emptyset$. Note that if $H$ has no nonnegative eigenvalues nor a zero resonance, then $\sigma_p(H)$ consists of finitely many negative discrete eigenvalues due to Lemma \ref{lemma_spectrum} (i).

\begin{theorem}
\label{local H-smoothing}
Let $n>2m$, $H=(-\Delta)^m+V$ and $|V(x)|\le C \<x\>^{-s}$ for some $s>2m$. Assume that $H$ has no positive eigenvalues and no zero resonance/eigenvalue. Then, for any $\nu>0$,
\begin{equation}
\label{eq3.1'}
\sup_{\lambda\in\R\setminus\mathcal E_{\nu},0<\theta\le1}\Big\||x|^{-m+\gamma}|D|^{\gamma}\ (H-\lambda\mp i\theta)^{-1}\ |D|^{\gamma}|x |^{-m+\gamma}\Big\|_{L^2-L^2}\le C_\nu<\infty,
\end{equation}
for any $m-{n\over 2}<\gamma< m-\frac{1}{2}$.  Moreover,  if $\gamma=m-\frac{1}{2}$, then for any $\epsilon>0$
\begin{equation}
\label{eq3.1}
\sup_{\lambda\in\R\setminus\mathcal E_{\nu},0<\theta\le1}\Big\|\<x\>^{-1/2-\epsilon}|D|^{m-1/2}\ (H-\lambda\mp i\theta)^{-1}\ |D|^{m-1/2}\<x\>^{-1/2-\epsilon}\Big\|_{L^2-L^2}\le C_\nu<\infty,
\end{equation}

\end{theorem}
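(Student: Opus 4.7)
The plan is to build on the Birman--Schwinger symmetric resolvent identity. Writing $V=Uv^2$ with $v:=|V|^{1/2}$, $U:=\sgn V$, and setting $M(z):=U+vR_0(z)v$, one has
\[
R_V(z)=R_0(z)-R_0(z)\,v\,M(z)^{-1}\,v\,R_0(z),
\]
valid wherever $M(z)$ is invertible on $L^2$. I would sandwich both sides by $A_\gamma:=|x|^{-m+\gamma}|D|^\gamma$ (or by $A_\gamma:=\<x\>^{-1/2-\epsilon}|D|^{m-1/2}$ in the endpoint case) and its adjoint, so the assertion reduces to three uniform bounds for $z=\lambda\pm i\theta$ with $\lambda\in\R\setminus\mathcal{E}_\nu$ and $\theta\in(0,1]$: (i) $\sup_z\|A_\gamma R_0(z)A_\gamma^*\|<\infty$; (ii) $\sup_z\|A_\gamma R_0(z) v\|<\infty$ (together with its adjoint); and (iii) $\sup_z\|M(z)^{-1}\|_{L^2\to L^2}<\infty$.

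For (i), in the interior range $m-n/2<\gamma<m-1/2$ the estimate is the sharp free Kato--Yajima supersmoothing bound flagged in the final bullet of the remark following Theorem \ref{Sobolev estimates}; the endpoint $\gamma=m-1/2$ is \eqref{eq2.3}. For (ii), since $|V|\le C\<x\>^{-s}$ with $s>2m$ one can choose $\epsilon>0$ with $s/2>m+\epsilon$, so $v\le C\<x\>^{-m-\epsilon}$, and \eqref{eq2.5} immediately gives $\sup_z\|vR_0(z)v\|<\infty$. A standard bilinear smoothing inequality (the cross term $\|AR_0(z)B^*\|$ of two $H_0$-supersmooth operators is uniformly bounded) then yields the mixed bound in (ii).

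The core of the argument is (iii). Because $v\le C\<x\>^{-s/2}$ with $s/2>m$, Theorem \ref{compactness} shows that $vR_0(z)v$ is compact on $L^2$ and extends norm-continuously from $\C\setminus[0,\infty)$ up to the boundary of each closed half-plane $\overline{\C^\pm}$. Hence $M(z)=U+vR_0(z)v$ is Fredholm of index zero and norm-continuous in $z$. Invertibility is automatic for $\Im z\ne 0$; at the boundary $\lambda\pm i0$, the Birman--Schwinger principle identifies $\ker M(\lambda\pm i0)$ with the space of eigenfunctions of $H$ when $\lambda\ne 0$, and (at $\lambda=0$) with the span of zero eigenfunctions and zero resonance functions. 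The hypotheses of no nonnegative eigenvalue and no zero resonance, combined with $\lambda\notin\mathcal{E}_\nu$, therefore force $M(z)^{-1}$ to exist pointwise throughout the set of interest, with norm-continuous dependence on $z$.

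The main obstacle will be promoting this pointwise invertibility to a \emph{uniform} bound over the unbounded strip $\{\lambda\pm i\theta:\lambda\in\R\setminus\mathcal{E}_\nu,\,\theta\in(0,1]\}$. For the high-energy regime I would invoke the decay $\|R_0(z)\|_{L^2_s\to L^2_{-s}}=O(|z|^{(1-2m)/2m})$ from \eqref{highenergy}, which forces $\|vR_0(z)v\|\to 0$ as $|z|\to\infty$, so a Neumann series gives $\|M(z)^{-1}\|\le 2$ for $|z|\ge N$, some large $N$. On the complementary compact set, norm-continuity of $M(z)^{-1}$ together with pointwise invertibility yields the desired uniform bound. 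Inserting (i)--(iii) into the symmetric resolvent identity then completes the proof.
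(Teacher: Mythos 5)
Your overall strategy---the symmetric resolvent identity $R_V(z)=R_0(z)-R_0(z)vM(z)^{-1}vR_0(z)$, the Fredholm/Neumann-series analysis of $M(z)$ split between a high-energy regime and a compact set, and the sandwiching by $A_\gamma$---is exactly the structure of the paper's proof (Steps~1 and~2 of Theorem~\ref{local H-smoothing}), and your reduction to the three uniform bounds (i)--(iii) is correct. The gap is in your item~(ii).

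You dispose of the cross term $\sup_z\|A_\gamma R_0(z)v\|$ by appealing to ``a standard bilinear smoothing inequality'' stating that the cross term of two $H_0$-supersmooth operators is automatically uniformly bounded. This is not a standard fact, and is not true in the generality you state. For the \emph{imaginary part} of the resolvent the cross bound $\|A\,\Im R_0(z)\,B^*\|\le\|A\,\Im R_0(z)\,A^*\|^{1/2}\|B\,\Im R_0(z)\,B^*\|^{1/2}$ does follow from Cauchy--Schwarz because $\Im R_0(z)\ge 0$; but supersmoothness concerns the \emph{full} resolvent, whose real part is an indefinite (Hilbert-transform type) object, and for an indefinite self-adjoint $R$ one can easily have $\|ARA^*\|$ and $\|BRB^*\|$ small while $\|ARB^*\|$ is not---so no pointwise Cauchy--Schwarz argument works, and no abstract lemma of this form is available without extra structure. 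This is precisely why the paper does \emph{not} argue abstractly here: it proves the cross-term bounds \eqref{eq3.7} and \eqref{eq3.11} directly by Hölder's inequality from the new uniform Sobolev estimates with derivatives, $\sup_z\||D|^\alpha R_0(z)\|_{L^p\to L^q}<\infty$ (Theorem~\ref{Sobolev estimates}, together with its Lorentz-space refinement from Remark~\ref{remark_1}). The paper explicitly describes those Sobolev estimates as indispensable for this step; your sketch bypasses them, and in doing so loses the crucial derivative-weighted input. To repair the argument you should prove $\sup_z\||D|^{m-1/2}\langle x\rangle^{-1/2-\epsilon}R_0(z)w\|_{L^2\to L^2}<\infty$ and $\sup_z\||x|^{-m+\gamma}|D|^\gamma R_0(z)w\|_{L^2\to L^2}<\infty$ along the lines of \eqref{eq3.9} and \eqref{eq3.13}, i.e.\ via Theorem~\ref{Sobolev estimates}.

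A smaller but non-cosmetic point concerns item~(iii): asserting that the ``Birman--Schwinger principle identifies $\ker M(\lambda\pm i0)$ with eigenfunctions'' at a positive boundary point $\lambda>0$ is not a formal identity. From $\psi\in\ker M(\lambda+i0)$ one only gets $g=R_0^+(\lambda)v\psi\in L^2_{-\beta}$ solving $(H-\lambda)g=0$, and one must still show $g\in L^2$ before the no-positive-eigenvalue hypothesis can be used. The paper supplies this by invoking Agmon--H\"ormander scattering theory (H\"ormander \cite[Theorem~14.5.2]{Ho}) to show $g$ is rapidly decreasing (estimate \eqref{eq3.4}); you should make this step explicit.
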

\begin{proof} The proof are divided into the following two steps.
\vskip0.2cm
{\it Step 1}.  Let $M(z)=I+wR_0(z)v$ for $z\in \C\setminus[0,\infty)$, where $v(x)=\sqrt{|V|}$ and $w(x)=v(x)\sgn V(x)$.  In the sequel, we will show that the inverse $M^{-1}(\lambda\pm i\theta)$ exists  on $L^2(\R^n)$ for $(\lambda,\theta)\in(\R\setminus\mathcal E_{\nu})\times[0,1]$ and that $M^{-1}(\lambda\pm i\theta)$ is continuous on $(\R\setminus\mathcal E_{\nu})\times [0,1]$, satisfying
\begin{equation}
\label{eq3.2}\sup_{\lambda\in\R\setminus\mathcal E_{\nu},\,0\le\theta\le1}\big\| M^{-1}(\lambda\pm i\theta)\big\|_{L^2-L^2}<\infty.
\end{equation}
Clearly $V(x)=v(x)w(x)$ and $|v(x)|=|w(x)|\le C\<x\>^{-s/2}$. In view of the compactness and continuity of $\<x\>^{-s/2} R_0(z)\<x\>^{-s/2}$ on $\overline{\C^{+}}$  ( also on $\overline{\C^{-}}$ ) from Theorem \ref{compactness}, it follows that the operator $wR_0(z)v$ is also compact on $L^2(\R^n)$ for each $z\in \C\setminus[0,\infty)$. In particular, as an operator valued function, $z\mapsto wR_0(z)v$ is analytic in $\C^{\pm}$ and extends continuously to the boundary set $\R$ in the uniform topology of $\mathbb B(L^2)$. Note that by the high energy estimate \eqref{highenergy}, we have that $\|wR_0(z)v\|_{L^2-L^2}\le 1/2 $ if  $z\in \C\setminus[0,\infty)$ and $|z|\ge r$ for some $r>0$. Hence by Neumann series expansion, it follows that the series
$$M^{-1}(z)=\sum_{j=0}^{\infty}(-1)^j (wR_0(z)v)^j$$
converges uniformly in the operator norm for $|z|\ge r$ and $z\in \C\setminus[0,\infty)$. Thus by the continuity in $z$ of $wR_0(z)v$, we have that  $M^{-1}(\lambda\pm i\theta)$ is continuous on $[r,\infty)\times [0,1]$ and  \begin{equation} \label{eq3.2'}
\sup_{\lambda\ge r,\, 0\le\theta\le1}\|M^{-1}(\lambda\pm i\theta)\|_{L^2-L^2}\le 2.
\end{equation}

It remains to deal with the invertibility and continuity of $M(z)$ on the domain $\Omega_\pm:=((-\infty,r]\setminus\mathcal E_{\nu})\pm i[0,1]$. Let $z\in \Omega_\pm$. Since $wR_0(z)v$ is compact on $L^2(\R^n)$, it follows by Fredholm-Riesz theory that the inverse $M^{-1}(z)$ exists in $\mathbb B(L^2)$ if and only if $\Ker_{L^2}(M(z))=\{0\}$. To show $\Ker_{L^2}(M(z))=\{0\}$, we suppose there exists $\psi\in L^2$ such that
\begin{equation} \label{eq3.3}
M(z)\psi=\psi+wR_0(z)v\psi=0.
 \end{equation}
 Set $f=v\psi\in L^2_{s/2}(\R^n)$ with some $s>2m$ and $g=R_0(z)f$. Then we have  $(H_0+V-z)g=0$. In the sequel, we will divide the three cases to show $\psi=0$.
\vskip0.2cm
Case (i). If $z\in \Omega_\pm$ and $\Im z>0$ or $z=\lambda<0$, then $g=R_0(z)f\in L^2(\R^n)$ and $g$ must be 0 due to $z\notin\sigma(H)$. So $f=v\psi=0$, which implies that $\psi=0$ from the  equation \eqref{eq3.3}.
\vskip0.2cm
  Case (ii). If $\Omega_\pm \ni z=\lambda>0$, then $g=R_0^{+}(\lambda)f\in L^2_{-\beta}(\R^n)$ for $\beta\ge s>1/2$ by Theorem \ref{LIP-free case}(i) and $(H_0+V-\lambda)g=0$. In fact, by Agmon-H\"ormander scattering theory (see e.g. H\"ormander \cite[Theorem 14.5.2]{Ho}), we can obtain that $g$ is a rapidly decreasing eigenfunction, i.e.
 \begin{equation} \label{eq3.4}
 \int_{\R^n}(1+|x|^2)^N|(D^\alpha g)(x)|^2 dx<\infty\quad\text{for all $N\in \N$ and  $|\alpha|\le 2m$}.
 \end{equation}
Note that \eqref{eq3.4} also holds for all eigenfunctions associated with negative eigenvalues of $H$.
This means $\lambda>0$ must be an eigenvalues of $H$, which will contract our assumption unless $g=0$.  Thus as shown in the case (i), we can deduce $\psi=0$.
\vskip0.2cm
 Case (iii). If $z=0$, then $g=R_0(0)f\in \bigcap_{\sigma>m}L^2_{-\sigma}(\R^n)$  by Theorem \ref{LIP-free case}(ii) and $(H_0+V)g=0$. Since zero is not neither resonance nor zero eigenvalue of $H$ from the assumption, so $g=0$, which again deduce $\psi=0$.
\vskip0.2cm
Now we need to prove the inverse operator function $M^{-1}(z)$ is continuous for $z\in\Omega_\pm$. We may consider the case $z\in \Omega_+$ only. In fact,  let $z_0\in \Omega_+$, since $wR_0(z)v$ is continuous on $z$ in $\mathbb B(L^2)$, hence for any $\epsilon>0$, there exists a $\delta>0$ depending on $z_0$ such that when $|z-z_0|<\delta$,
\begin{equation*} \label{eq3.5}
\big\|M^{-1}(z)-M^{-1}(z_0)\big\|_{L^2-L^2}
=
\Big\|\Big(\big(I+\big(wR_0(z)v-wR_0(z_0)v\big)M^{-1}(z_0)\big)^{-1}-I\Big)M^{-1}(z_0)\Big\|_{L^2-L^2}<\epsilon.
\end{equation*}
Thus the continuity of $M^{-1}(z)$ can give
$\sup_{(\lambda,\theta)\in \Omega_+}\big\| M^{-1}(\lambda\pm i\theta)\big\|_{L^2-L^2}<\infty$, which combining with the \eqref{eq3.2'}, leads to the desired \eqref{eq3.2}.
\vskip0.3cm
{\it Step 2}. Let $z=\lambda\pm i\theta\in (\R\setminus \mathcal E_\nu)\pm i(0,1]$. Firstly, recall that the following resolvent formula
\begin{equation}
\label{resolvent formula}
R(z)=(H_0+V-z)^{-1}=R_0(z)-R_0(z)wM^{-1}(z)vR_0(z)
\end{equation}
holds, where $M(z)=I+wR_0(z)v$ and $V(x)=v(x)w(x)$ defined as in Step 1 above. To prove \eqref{eq3.1}, in view of the inequality \eqref{eq2.5} of $R_0(z)$,  it suffices to show the following estimate:
\begin{equation}
\label{eq3.6}
\sup_{\lambda\in\R\setminus \mathcal E_\nu,0<\theta\le1}\Big\|\<x\>^{-1/2-\epsilon}|D|^{m-1/2}\ R_0(z)wM^{-1}(z)vR_0(z)\ |D|^{m-1/2}\<x\>^{-1/2-\epsilon}\Big\|_{L^2-L^2}<\infty,
\end{equation}
for any $\epsilon>0$. Since $\<x\>^{-1/2-\epsilon}|D|^{m-1/2} R_0(z)w(x)$ is essentially dual each other with the operator $v(x)R_0(z) |D|^{m-1/2}\<x\>^{-1/2-\epsilon}$, using of the uniform estimate \eqref{eq3.2} for $M^{-1}$(z), it suffices to show
\begin{equation}
\label{eq3.7}
\sup_{z\in \C\setminus[0,\infty)}\bignorm{\<x\>^{-1/2-\epsilon}|D|^{m-1/2}\ R_0(z)w}_{L^2-L^2}<\infty.
\end{equation}
We now apply Theorem \ref{Sobolev estimates} to show \eqref{eq3.7} as follows. Choose $(1/p_0,1/q_0)=((n+2m)^-/2n),$ $(n-1)^+/2n)$ be such that the inequality \eqref{eq2.11} holds, where $a^{+}$( resp. $a^{-}$ ) denotes some number which is arbitrarily close but larger (resp. less) than $a$.  Then by H\"older inequality  we have
 \begin{equation}
\label{eq3.9}
\Big\|\<x\>^{-\frac{1}{2}-\epsilon}|D|^{m-\frac{1}{2}}\ R_0(z)w\Big\|_{L^2-L^2}\le \|\<x\>^{-\frac{1}{2}-\epsilon}\|_{L^{(2n)^-}}\big\||D|^{m-\frac{1}{2}}\ R_0(z)\big\|_{L^{p_0}-L^{q_0}}\|w\|_{L^{(n/m)^-}},
\end{equation}
where we have used the estimate $|w(x)|\lesssim \<x\>^{-m-}$ and $\<x\>^{-m-}\in L^{(n/m)^-}$.
Hence we get \eqref{eq3.7} and then the desired  estimate \eqref{eq3.1}.

To prove the \eqref{eq3.1'}, recall that the same uniform estimate for the free resolvent
\begin{equation}
\label{eq3.10}
\sup_{z\in \C\setminus[0,\infty)}\big\||x|^{-m+\gamma}|D|^{\gamma} R_0(z)|D|^{\gamma}|x|^{-m+\gamma}\big\|_{L^2-L^2}<\infty
\end{equation}
holds for any $m-n/2<\gamma<m-1/2$ (see \cite{RuSu}). Then in view of the resolvent formula \eqref{resolvent formula} and a similar argument above, it suffices to prove
\begin{equation}
\label{eq3.11}
\sup_{z\in \C\setminus[0,\infty)}\big\||x|^{-m+\gamma}|D|^{\gamma}\ R_0(z)w\big\|_{L^2-L^2}<\infty.
\end{equation}
To this end, we note that, as mentioned in Remark \ref{remark_1}, \eqref{eq2.11} together with the real interpolation theory implies
\begin{equation}
\label{eq3.12}\sup_{z\in \C\setminus[0,\infty)}\big\||D|^{\gamma} R_0(z)\big\|_{L^{p_0,2}-L^{q_0,2}}<\infty,\end{equation}
where  $(1/p_0,1/q_0)=((n+2m)/2n, (n-2m+2\gamma)/2n )$. Hence combining with weak H\"older's inequality, we obtain from \eqref{eq3.12} that
  \begin{equation}
\label{eq3.13}\big\||x|^{-m+\gamma}|D|^{\gamma} R_0(z)w\big\|_{L^2-L^2}\le \||x|^{-m+\gamma}\|_{L^{{n\over m-\gamma},\infty}}\big\||D|^{\gamma} R_0(z)\big\|_{L^{p_0,2}-L^{q_0,2}}\|w\|_{L^{\frac nm,\infty}}\lesssim 1
\end{equation}
uniformly in $z\in \C\setminus[0,\infty)$, which immediately deduce the desired bound \eqref{eq3.11}.
\end{proof}

If $H$ has no eigenvalues (in which case $\mathcal E_\nu=\emptyset$), then Theorem \ref{local H-smoothing} means that $|x|^{-m+\gamma}|D|^{\gamma}$ and $\<x\>^{-1/2-\epsilon}|D|^{m-1/2}$ are $H$-supersmooth in the sense of Kato-Yajima \cite{KaYa}, which implies Kato smoothing estimates, i.e. Theorem \ref{Kato-smoothing}. However, if $H$ has eigenvalues, then Theorem \ref{local H-smoothing} is not sufficient to achieve Theorem \ref{Kato-smoothing} since we have to deal with not only $e^{itH}$,  but also its absolutely continuous part $e^{itH}P_{ac}(H)$. To this end, we need to replace the resolvent ($H-z)^{-1}$ by its absolutely continuous part in Theorem \ref{local H-smoothing}, which is the main point of the following corollary.

\begin{corollary}
\label{corollary_3}
Let $P_{ac}(H)$ denote the projection onto the absolutely continuous spectral space of $H=(-\Delta)^m+V$. Then, under the conditions in Theorem \ref{local H-smoothing}, $|x|^{-m+\gamma}|D|^{\gamma}P_{ac}(H)$ and $\<x\>^{-1/2-\epsilon}|D|^{m-1/2}P_{ac}(H)$ are $H$-supersmooth, i.e. the following uniform estimates hold:

\begin{align*}
\sup_{\lambda\in \R\,\theta\in (0,1]}\big\||x|^{-m+\gamma}|D|^\gamma P_{ac}(H)(H-\lambda\mp i\theta)^{-1}P_{ac}(H)|D|^\gamma |x|^{-m+\gamma}\big\|_{L^2-L^2}&<\infty,
\end{align*}
and
\begin{align*}
\sup_{\lambda\in \R\,\theta\in (0,1]}\big\|\<x\>^{-1/2-\epsilon}|D|^{m-1/2}P_{ac}(H)(H-\lambda\mp i\theta)^{-1}P_{ac}(H)|D|^{m-1/2}\<x\>^{-1/2-\epsilon}\big\|_{L^2-L^2}&<\infty.
\end{align*}
\end{corollary}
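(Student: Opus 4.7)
Under the hypotheses of Theorem \ref{local H-smoothing}, Lemma \ref{lemma_spectrum}(i) implies that $\sigma_p(H) = \{\lambda_j\}_{j=1}^{N_0}$ is a finite set of negative discrete eigenvalues of finite multiplicity, with Riesz projections $P_j$ onto the corresponding eigenspaces. Combining the decomposition of the identity $I = P_{ac}(H) + \sum_j P_j$ with $(H-z)^{-1}P_j = P_j/(\lambda_j-z)$ yields the basic identity
\begin{equation*}
P_{ac}(H)(H-z)^{-1}P_{ac}(H) = (H-z)^{-1} - \sum_{j=1}^{N_0}\frac{P_j}{\lambda_j - z}, \qquad z\in\C\setminus\sigma(H). \tag{$*$}
\end{equation*}
Writing $A$ for either $|x|^{-m+\gamma}|D|^\gamma$ or $\<x\>^{-1/2-\epsilon}|D|^{m-1/2}$, the goal is a uniform bound on $\|AP_{ac}(H)(H-z)^{-1}P_{ac}(H)A^*\|_{L^2\to L^2}$ for $z=\lambda\mp i\theta$ with $\lambda\in\R$ and $\theta\in(0,1]$. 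Each eigenfunction of $H$ is smooth and rapidly decreasing by the Agmon-H\"ormander bound \eqref{eq3.4}, and the local $L^2$-integrability of $|x|^{-m+\gamma}$ at the origin (guaranteed by $\gamma>m-n/2$) makes every finite-rank operator $AP_jA^*$ bounded on $L^2$.

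Next I would fix $\nu>0$ so small that $\mathcal E_\nu=\bigsqcup_{j=1}^{N_0}B(\lambda_j,\nu)\subset(-\infty,0)$ is a disjoint union and $\nu$ is less than half the minimal eigenvalue gap. In Region I, $\lambda\in\R\setminus\mathcal E_\nu$, Theorem \ref{local H-smoothing} bounds the first term in $(*)$ and $|\lambda_j-z|\ge\nu$ bounds each summand in the second; this settles the region. In Region II, $\lambda\in B(\lambda_{j_0},\nu)$ for some $j_0$, I would use the Riesz decomposition $(H-z)^{-1} = P_{j_0}/(\lambda_{j_0}-z) + \tilde R_{j_0}(z)$, where $\tilde R_{j_0}(z)=(H-z)^{-1}(I-P_{j_0})$ is analytic in $z$ on a complex disk around $\lambda_{j_0}$ with values in $\mathbb B(L^2)$, to rewrite $(*)$ as
\begin{equation*}
AP_{ac}(H)(H-z)^{-1}P_{ac}(H)A^* = A\tilde R_{j_0}(z)A^* - \sum_{j\ne j_0}\frac{AP_jA^*}{\lambda_j-z},
\end{equation*}
in which the sum is uniformly bounded by the choice of $\nu$. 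It then remains to bound $A\tilde R_{j_0}(z)A^*$ uniformly for $z$ near $\lambda_{j_0}$.

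For this remaining step I plan to apply Cauchy's integral formula along a rectangular contour $\Gamma$ surrounding $\lambda_{j_0}$, with vertical sides $\{\Re\zeta = \lambda_{j_0}\pm 2\nu\}\subset\R\setminus\mathcal E_\nu$ and horizontal sides $\{\Im\zeta = \pm\nu\}$. On the vertical sides Theorem \ref{local H-smoothing} bounds $A(H-\zeta)^{-1}A^*$, and then $A\tilde R_{j_0}(\zeta)A^* = A(H-\zeta)^{-1}A^* - AP_{j_0}A^*/(\lambda_{j_0}-\zeta)$ inherits a uniform bound after subtracting the already-bounded finite-rank correction. On the horizontal sides $|\Im\zeta|=\nu>0$ makes $M(\zeta)$ automatically invertible (Step 1 Case (i) of the proof of Theorem \ref{local H-smoothing}), and continuity of $M(\zeta)^{-1}$ on this compact subset of $\C\setminus\R$ combined with the resolvent formula \eqref{resolvent formula} and the free-resolvent bounds \eqref{eq2.5}, \eqref{eq3.7} and \eqref{eq3.11} yields a uniform weighted bound on $A(H-\zeta)^{-1}A^*$, and hence on $A\tilde R_{j_0}(\zeta)A^*$. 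Cauchy's formula then transfers this uniform $\Gamma$-bound to the interior, completing Region II.

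The main obstacle is precisely the horizontal portion of $\Gamma$, on which $\Re\zeta$ sweeps through $\mathcal E_\nu$ so that Theorem \ref{local H-smoothing} as stated does not apply. The observation that defuses this is that the exclusion $\lambda\notin\mathcal E_\nu$ in Theorem \ref{local H-smoothing} is only needed to guarantee invertibility of the boundary value $M(\lambda\pm i0)$; whenever $\Im\zeta\ne 0$ the same Fredholm argument (together with the high-energy bound \eqref{highenergy}) gives invertibility of $M(\zeta)$, and only continuity on a compact subset of $\C\setminus\R$ is required to conclude.
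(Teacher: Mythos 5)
Your proposal takes a genuinely different route from the paper's. The paper first establishes Lemma \ref{lemma_projection}, namely $\|GP_{ac}(H)f\|_{L^2}\lesssim \|Gf\|_{L^2}$ for $G=|x|^{-m+\gamma}|D|^\gamma$ (and similarly for the other weight), via the Stein--Weiss Lemma \ref{lemma_fractional} together with the Agmon--H\"ormander decay \eqref{eq3.4} of the eigenfunctions; it then splits $\lambda\in\R\setminus\mathcal E_\nu$ from $\lambda\in\mathcal E_\nu$. The first case follows from Theorem \ref{local H-smoothing} plus Lemma \ref{lemma_projection}, while the second is dispatched with a purely spectral-theoretic bound: since the absolutely continuous spectrum lives on $[0,\infty)$ and $\mathrm{dist}(\mathcal E_\nu,[0,\infty))\ge\nu$, one gets $\|GP_{ac}(H)(H-z)^{-1}G^*\|\lesssim \||D|^mP_{ac}(H)(H-z)^{-1}|D|^m\|\lesssim\sup_{t\ge0}\<t\>|t-z|^{-1}\lesssim\nu^{-1}$; the negative-$\gamma$ range is then obtained by writing $|x|^{-m+\gamma}|D|^\gamma=\big(|x|^{-m+\gamma}|D|^\gamma|x|^m\big)\cdot|x|^{-m}$ and invoking Stein--Weiss again. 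Your identity $(*)$ plus contour argument replaces this and avoids the need to control $(G^{-1})^*\psi_j$ (only $A\psi_j\in L^2$ is needed), which is a small economy.

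However, there are two genuine gaps in your Region II treatment. First, your contour $\Gamma$ encloses only $\{\,|\Re\zeta-\lambda_{j_0}|<2\nu,\ |\Im\zeta|<\nu\,\}$, yet Region II consists of $z=\lambda\mp i\theta$ with $\lambda\in B(\lambda_{j_0},\nu)$ and $\theta\in(0,1]$. For $\theta\in(\nu,1]$ the point $z$ lies outside $\Gamma$, so Cauchy's formula gives nothing; you would have to treat $\theta\ge\nu$ separately (which is feasible, since on the compact set $\{\,|\Re z-\lambda_{j_0}|\le\nu,\ \nu\le|\Im z|\le 1\,\}$ the resolvent formula \eqref{resolvent formula} together with continuity and compactness-based invertibility of $M(z)$ gives the weighted bound directly, after which subtracting the pole $P_{j_0}/(\lambda_{j_0}-z)$ is harmless), but this is not in the proposal as written. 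Second, applying Cauchy's integral formula to $A\tilde R_{j_0}(\zeta)A^*$ presupposes analyticity of that $\mathbb B(L^2)$-valued function inside $\Gamma$; since $A$ is an unbounded operator and $\tilde R_{j_0}(\zeta)$ is only known to be analytic as a $\mathbb B(L^2)$-valued map, this is not automatic. It can be repaired by testing against $f,g\in C_0^\infty(\R^n\setminus\{0\})$, observing that $\zeta\mapsto\<\tilde R_{j_0}(\zeta)A^*g,A^*f\>$ is a scalar analytic function, applying the scalar Cauchy formula, and then using density to extend, but that step must be supplied. In contrast, the paper's direct spectral-theorem bound for $\lambda\in\mathcal E_\nu$ avoids any contour argument and any analyticity issue, which is why it is the shorter route.
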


To prove this corollary, we prepare two lemmas. The first one, which is a special case of \cite[Theorem B$^*$]{StWe}, concerns with the weighted $L^2$-boundedness of the fractional integral operator.

\begin{lemma}[{\cite[Theorem B$^*$]{StWe}}]
\label{lemma_fractional}
Let $0<\lambda<n$, $\alpha,\beta<n/2$, $\alpha+\beta\ge0$ and $\lambda+\alpha+\beta=n$. Then $|x|^{-\beta}|D|^{-n+\lambda} |x|^{-\alpha}$ extends to a bounded operator on $L^2(\R^n)$.
\end{lemma}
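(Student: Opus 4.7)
The plan is to represent the operator by a positive integral kernel and apply Schur's test with a power-weight test function.

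First, since $0<n-\lambda<n$, the Riesz potential $|D|^{-(n-\lambda)}$ acts as convolution with $c_{n,\lambda}|x|^{-\lambda}$, so that for $f,g\in\S(\R^n)$,
\begin{equation*}
\big\<|x|^{-\beta}|D|^{-(n-\lambda)}|x|^{-\alpha}f,g\big\>
=c_{n,\lambda}\iint_{\R^n\times\R^n}\frac{f(y)\,\overline{g(x)}}{|x|^{\beta}\,|x-y|^{\lambda}\,|y|^{\alpha}}\,dy\,dx.
\end{equation*}
The stated $L^2$-boundedness is therefore equivalent, by duality, to the positive-kernel estimate
\begin{equation*}
\iint K(x,y)\,|f(y)|\,|g(x)|\,dy\,dx\lesssim \|f\|_{L^2}\|g\|_{L^2},\qquad K(x,y):=|x|^{-\beta}|x-y|^{-\lambda}|y|^{-\alpha},
\end{equation*}
which, by density of $\S(\R^n)$ in $L^2(\R^n)$, gives the full result.

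Second, I apply Schur's test with the common weight $w(x)=|x|^{-a}$, where $a$ is chosen in the interval $\max(\alpha,\beta)<a<n-\max(\alpha,\beta)$. This interval is nonempty precisely because of the hypothesis $\alpha,\beta<n/2$. The main computation reduces to the homogeneous convolution identity
\begin{equation*}
\int_{\R^n}|x-y|^{-s}|y|^{-t}\,dy=C(n,s,t)\,|x|^{n-s-t},\qquad 0<s<n,\ t<n,\ s+t>n,
\end{equation*}
which I stress imposes no sign restriction on $t$ and therefore applies equally when one of $\alpha,\beta$ is negative (the hypotheses force only $\alpha+\beta=n-\lambda>0$, not individual positivity). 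Applying it with $(s,t)=(\lambda,\alpha+a)$, and using $\alpha+\beta+\lambda=n$, yields
\begin{equation*}
\int K(x,y)\,|y|^{-a}\,dy=C_1\,|x|^{-\beta}\cdot|x|^{n-\lambda-\alpha-a}=C_1\,|x|^{-a},
\end{equation*}
and a symmetric computation with $(s,t)=(\lambda,\beta+a)$ gives $\int K(x,y)|x|^{-a}\,dx=C_2|y|^{-a}$. The three convergence conditions $\lambda<n$, $\alpha+a<n$, $\lambda+\alpha+a>n$ (i.e.\ $a>\beta$), and symmetrically for $\beta$, are exactly the endpoints of the chosen interval for $a$.

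Third, Schur's test then delivers $\|T\|_{L^2\to L^2}\le\sqrt{C_1 C_2}$, completing the proof after extending from $\S(\R^n)$ by density. The main delicate point is the simultaneous convergence of both homogeneous convolutions; this is precisely what pins down the range $\max(\alpha,\beta)<a<n-\max(\alpha,\beta)$ and explains why the restriction $\alpha,\beta<n/2$ is sharp in this Schur-test scheme. Bookkeeping the case of negative exponents (which can occur as long as $\alpha+\beta\ge 0$) is handled by the remark that the homogeneity argument underlying the convolution identity proceeds by the scaling substitution $y=|x|z$ and a constant integral over $z$ whose finiteness depends only on the three conditions above, not on signs.
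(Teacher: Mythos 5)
Your proof is correct. The paper itself does not prove Lemma \ref{lemma_fractional}: it is quoted verbatim as Theorem B$^*$ of Stein--Weiss \cite{StWe}, where it appears as the $p=q=2$ case of their $L^p$--$L^q$ weighted fractional-integration inequality and is established there by a more involved interpolation argument covering the whole scale. What you supply is a self-contained Schur-test proof of precisely the $L^2$ endpoint, which is the standard elementary route in the Hilbert-space setting. The computation checks out: representing $|D|^{-(n-\lambda)}$ by the Riesz kernel $c_{n,\lambda}|x-y|^{-\lambda}$ (valid since $0<n-\lambda<n$), choosing the Schur weight $|x|^{-a}$ with $\max(\alpha,\beta)<a<n-\max(\alpha,\beta)$ (nonempty exactly when $\alpha,\beta<n/2$), and invoking $\int_{\R^n}|x-y|^{-s}|y|^{-t}\,dy=C\,|x|^{n-s-t}$ under $0<s<n$, $t<n$, $s+t>n$ closes both Schur bounds via $\lambda+\alpha+\beta=n$. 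Your remark that the convolution identity imposes no sign constraint on $t$ is the right thing to say, and one can even observe the issue never arises: since $a>\max(\alpha,\beta)$ and $\alpha+\beta=n-\lambda>0$, both exponents $\alpha+a$ and $\beta+a$ are automatically positive, so the scaling substitution $y=|x|z$ runs through without case distinctions. In short, this is a correct, more elementary and more explicit argument than the bare citation the paper gives, at the cost of being specific to $L^2$ rather than the full $L^p$--$L^q$ range of Stein--Weiss.
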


The second one is concerned with several mapping properties of the projection $P_{ac}(H)$. Note that the estimates \eqref{eq_lemma_projection_1} and \eqref{eq_lemma_projection_2} are sufficient to obtain Corollary \ref{corollary_3}, while \eqref{eq_lemma_projection_3} will be used to prove Strichartz estimates (i.e. Theorem \ref{theorem_4_1}) in the next section.

\begin{lemma}
\label{lemma_projection}
Let $n>2m$, $m\in \N$, $0\le \gamma<m-1/2$ and $H=(-\Delta)^m+V$ be as in Theorem \ref{local H-smoothing}. Then
\begin{align}
\label{eq_lemma_projection_1}
\||x|^{-m+\gamma}|D|^{\gamma}P_{ac}(H)f\|_{L^2}&\lesssim \||x|^{-m+\gamma}|D|^{\gamma}f\|_{L^2};
\end{align}
\begin{align}
\label{eq_lemma_projection_2}
\|\<x\>^{-1/2-\epsilon}|D|^{m-1/2}P_{ac}(H)f\|_{L^2}&\lesssim \|\<x\>^{-1/2-\epsilon}|D|^{m-1/2}f\|_{L^2},\quad \epsilon>0.
\end{align}
Moreover, for any admissible pair $(p,q)$ satisfying \eqref{admissible} with $\alpha=n/2$,
\begin{align}
\label{eq_lemma_projection_3}
\||D|^{-2(m-1)/p}P_{ac}(H)f\|_{L^{q'}}\lesssim \||D|^{-2(m-1)/p}f\|_{L^{q'}}.
\end{align}
\end{lemma}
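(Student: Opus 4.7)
\textbf{Proof plan for Lemma \ref{lemma_projection}.}

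Under the hypotheses and Lemma \ref{lemma_spectrum}(i), the point spectrum of $H$ consists of finitely many negative eigenvalues of finite multiplicity; letting $\{\psi_j\}_{j=1}^N$ be an $L^2$-orthonormal basis of the associated bound states, one has
$$ P_{ac}(H) = I - K, \qquad Kf = \sum_{j=1}^N \<f,\psi_j\>\psi_j. $$
By elliptic regularity and the Agmon--H\"ormander estimate \eqref{eq3.4}, each $\psi_j$ is smooth and decays faster than any polynomial, so we may treat it as a Schwartz-class function throughout. Since $K$ is a finite sum of rank-one operators, proving each of \eqref{eq_lemma_projection_1}--\eqref{eq_lemma_projection_3} reduces to establishing the corresponding bound for a single rank-one map $f\mapsto\<f,\psi\>\psi$ with $\psi$ Schwartz-like.

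Let $A$ denote the weight-and-derivative operator appearing on the left-hand side of the target estimate. Using the formal self-adjointness of $|x|^{a}$, $\<x\>^{a}$ and $|D|^{b}$, write
$$ \<f,\psi\> = \<Af,\, B\psi\>, \qquad B := (A^*)^{-1}, $$
so that H\"older's inequality gives
$$ \|A(\<f,\psi\>\psi)\|_X = |\<f,\psi\>|\,\|A\psi\|_X \le \|Af\|_X\,\|A\psi\|_X\,\|B\psi\|_{X^*}, $$
with $X = L^2$ for \eqref{eq_lemma_projection_1}--\eqref{eq_lemma_projection_2} and $X = L^{q'}$ for \eqref{eq_lemma_projection_3}. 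The task thus reduces to showing that both $\|A\psi\|_X$ (routine from Schwartz decay and smoothness of $\psi$) and the Riesz-potential-type norm $\|B\psi\|_{X^*}$ are finite.

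For \eqref{eq_lemma_projection_1}, $B\psi = |x|^{m-\gamma}|D|^{-\gamma}\psi$; factor
$$ |x|^{m-\gamma}|D|^{-\gamma}\psi = \bigl(|x|^{m-\gamma}|D|^{-\gamma}|x|^{-m}\bigr)\,(|x|^{m}\psi) $$
and apply the Stein--Weiss inequality (Lemma \ref{lemma_fractional}) with $\alpha = m$, $\beta = \gamma - m$, $\lambda = n - \gamma$: the constraints $\alpha, \beta < n/2$ hold since $\alpha = m < n/2$ (by $n > 2m$) and $\beta \le 0$, while $\alpha+\beta = \gamma \in [0, m-\tfrac{1}{2}) \subset [0,n)$, so the parenthesised operator is bounded on $L^2$; rapid decay of $\psi$ then gives $|x|^{m}\psi \in L^2$. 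For \eqref{eq_lemma_projection_3}, the Riesz potential $|D|^{-2(m-1)/p}\psi$ decays like $|x|^{-n + 2(m-1)/p}$ at infinity; a short computation using the admissibility identity $2/p + n/q = n/2$ reduces membership in $L^{q'}$ at infinity to the inequality $q < 2m/(m-1)$, which is forced by $n > 2m$ since it yields $q \le 2n/(n-2) < 2m/(m-1)$.

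The main obstacle is \eqref{eq_lemma_projection_2}, for which $B\psi = \<x\>^{1/2+\epsilon}|D|^{-(m-1/2)}\psi$. The Riesz potential $|D|^{-(m-1/2)}\psi$ enjoys only algebraic decay $|x|^{-n + m - 1/2}$ at infinity (the leading constant being a multiple of $\hat\psi(0) = \int \psi$, which is generically nonzero), so $\<x\>^{1/2+\epsilon}|D|^{-(m-1/2)}\psi \in L^2$ demands $\epsilon < (n-2m)/2$. The hypothesis $n > 2m$ ensures this range of $\epsilon$ is nonempty and delivers \eqref{eq_lemma_projection_2} for all sufficiently small $\epsilon > 0$, which is precisely what is needed in the proof of Theorem \ref{local H-smoothing}; this delicate compatibility between weight and dimension is the crux of the argument.
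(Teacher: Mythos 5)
Your approach coincides with the paper's: decompose $P_{ac}(H)=I-\sum_{j=1}^{N_0}\<\cdot,\psi_j\>\psi_j$, reduce to rank-one estimates by duality, and check finiteness of the two weighted eigenfunction norms using Stein--Weiss (Lemma \ref{lemma_fractional}) together with the rapid decay \eqref{eq3.4}. Your Stein--Weiss indices for \eqref{eq_lemma_projection_1} are exactly the paper's, and your treatment of \eqref{eq_lemma_projection_3} via pointwise decay of the Riesz potential is a minor variant of the paper's Sobolev route $\||D|^{-2(m-1)/p}\psi_j\|_{L^{q'}}\lesssim\|\psi_j\|_{L^r}$ with $1/r=1/2+2m/(np)\in(1/2,1)$, leading to the same verification.

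The substantive point in your write-up is the one about \eqref{eq_lemma_projection_2}, and it is a valid observation that the paper elides with the remark that the proof is ``essentially the same'' as for \eqref{eq_lemma_projection_1}. Applying Stein--Weiss to $\<x\>^{1/2+\epsilon}|D|^{-(m-1/2)}\psi_j = \bigl(\<x\>^{1/2+\epsilon}|D|^{-(m-1/2)}|x|^{-m-\epsilon}\bigr)\bigl(|x|^{m+\epsilon}\psi_j\bigr)$ forces $\alpha=m+\epsilon<n/2$, i.e. $\epsilon<(n-2m)/2$, matching your heuristic from the decay $|D|^{-(m-1/2)}\psi_j(x)\sim c\bigl(\int\psi_j\bigr)|x|^{m-\frac12-n}$. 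When some $\int\psi_j\neq 0$, the rank-one map $f\mapsto\<f,\psi_j\>\psi_j$ is genuinely unbounded on the space normed by $\|\<x\>^{-1/2-\epsilon}|D|^{m-1/2}\cdot\|_{L^2}$ once $\epsilon\ge(n-2m)/2$, so \eqref{eq_lemma_projection_2} is slightly overstated for large $\epsilon$. As you note, this is harmless downstream: Corollary \ref{corollary_3} and Theorems \ref{Kato-smoothing}--\ref{theorem_2} place $\<x\>^{-1/2-\epsilon}$ on the favourable side and improve monotonically in $\epsilon$, so the small-$\epsilon$ case is all that is needed. One cosmetic slip: in your rank-one scheme for \eqref{eq_lemma_projection_3} the quantity you compute, $\||D|^{-2(m-1)/p}\psi\|_{L^{q'}}$, is $\|A\psi\|_X$ rather than $\|B\psi\|_{X^*}=\||D|^{2(m-1)/p}\psi\|_{L^q}$; both are required, and you treat the less obvious one.
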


\begin{proof}
Under the assumption of $H$, we know that $H$ has at most finitely many negative eigenvalues and there are neither embedded eigenvalues nor singular continuous spectrum (see Lemma \ref{lemma_spectrum}). Hence, with some finite integer $N_0\ge0$, $P_{ac}(H)$ is of the form $$P_{ac}(H)f=f-\sum_{j=1}^{N_0}\<\psi_j,f\>\psi_j,$$
where $\psi_1,...,\psi_{N_0}$ are eigenfunctions associated with the negative eigenvalues of $H$.

Let $G:=|x|^{-m+\gamma}|D|^{\gamma}$ and $(G^{-1})^*=|x|^{m-\gamma}|D|^{-\gamma}$. Then one has
\begin{align}
\label{projection estimate}
\norm{GP_{ac}(H)f}_{L^2}\le \|Gf\|_{L^2}+\sum_{j=1}^{N_0}\|G\psi_j\|_{L^2}\|(G^{-1})^*\psi_j\|_{L^2}\|Gf\|_{L^2}.
\end{align}
Note that Lemma \ref{lemma_fractional} with $(\lambda,\alpha,\beta)=(n-m+\gamma,0,m-\gamma)$ and \eqref{eq3.4}  imply
$$
\|G\psi_j\|_{L^2}=\||x|^{-m+\gamma}|D|^{-m+\gamma}\cdot |D|^m\psi_j\|_{L^2}\lesssim \||D|^m\psi_j\|_{L^2}<\infty.
$$
Again, using Lemma \ref{lemma_fractional} with $(\lambda,\alpha,\beta)=(n-\gamma,m,-m+\gamma)$  and \eqref{eq3.4}, we also have
$$
\|(G^{-1})^*\psi_j\|_{L^2}=\||x|^{m-\gamma}|D|^{-\gamma}|x|^{-m}\cdot |x|^m\psi_j\|_{L^2}\lesssim \||x|^m\psi_j\|_{L^2}<\infty.
$$
Therefore, the desired estimate \eqref{eq_lemma_projection_1} follows from \eqref{projection estimate}. The proof of \eqref{eq_lemma_projection_2} is essentially the same as \eqref{eq_lemma_projection_1} by using  the variant of Lemma \ref{lemma_fractional} and we thus omit it.

Next, let us consider the last estimate \eqref{eq_lemma_projection_3}. By the same argument, it suffices to show that $|D|^{2(m-1)/p}\psi_j\in L^{q}$ and $|D|^{-2(m-1)/p}\psi_j\in L^{q'}$, which can be also  deduced from \eqref{eq3.4} as follows. Firstly, since $p,q$ satisfy $2/p=n(1/2-1/q)$, the Sobolev inequality implies
$$
\||D|^{2(m-1)/p}\psi_j\|_{L^q}\lesssim \| |D|^{2/p}|D|^{2(m-1)/p}\psi_j\|_{L^2}=\||D|^{2m/p}\psi_j\|_{L^2}\le \|\<D\>^{m}\psi_j\|_{L^2}<\infty,
$$
where we have used the fact $p\ge2$ and \eqref{eq3.4}. Secondly, by \eqref{eq3.4} with $N>n/2$ and H\"older's inequality, we have $\psi_j\in L^1\cap L^2$ and hence $\psi_j\in L^r$ for any $1\le r\le2$. This fact, combined with Sobolev's inequality, shows that
$$
\||D|^{-2(m-1)/p}\psi_j\|_{L^{q'}}\lesssim \|\psi_j\|_{L^r}<\infty,
$$
where $1/r=1/q'+2(m-1)/(np)=1/2+2m/(np)>1/2$. Therefore, we have proved the desired estimate \eqref{eq_lemma_projection_3}.
\end{proof}

\begin{proof}[\underline{Proof of Corollary \ref{corollary_3}}]
We only prove the result for $G:=|x|^{-m+\gamma}|D|^{\gamma}$, since the proof for the operator $\<x\>^{-1/2-\epsilon}|D|^{m-1/2}$ is analogous. Note that $P_{ac}(H)=P_{ac}(H)^2$, it is enough to show
\begin{align}
\label{eq3.14}
\sup_{\lambda\in \R,\,\theta\in (0,1]}\norm{GP_{ac}(H)(H-\lambda\mp i\theta)^{-1}G^*}_{L^2-L^2}<\infty.
\end{align}
The proof of \eqref{eq3.14} is divided into two cases $0\le \gamma<m-1/2$ and $m-n/2<\gamma<0$.
\vskip0.2cm
We first let $0\le \gamma<m-1/2$ and observe from Lemma \ref{lemma_projection} that $P_{ac}(H)$ satisfies
\begin{align}
\label{eq3.15}
\norm{GP_{ac}(H)f}_{L^2}\lesssim \norm{Gf}_{L^2},\quad f\in D(G).
\end{align}
Let $\nu>0$ be so small that $ \mathrm{\mathop{dist}}_{\,\R}(\mathcal E_\nu,[0,\infty))\ge \nu$ which is possible since $\sigma_p(H)$ consists of finitely many negative eigenvalues under our assumption (see Lemma \ref{lemma_spectrum} above). We consider two cases $\lambda\in \R\setminus\mathcal E_\nu$ or $\lambda\in \mathcal E_\nu$ separately. If $\lambda\in \R\setminus\mathcal E_\nu$, \eqref{eq3.14} follows from \eqref{eq3.1'} and \eqref{eq3.15}. If $\lambda\in \mathcal E_\nu$, then since $\|Gf\|_{L^2}\lesssim \||D|^mf\|_{L^2}$ by Hardy's inequality ( also as seen in the proof of Lemma \ref{lemma_projection} ), we have
$$
\norm{GP_{ac}(H)(H-\lambda\mp i\theta)^{-1}G^*}_{L^2-L^2}\lesssim \||D|^mP_{ac}(H)(H-\lambda\mp i\theta)^{-1}|D|^m\|_{L^2-L^2}.
$$
Since $\||D|^m\<H\>^{-1/2}\|_{L^2-L^2}\lesssim 1$ by the fact $D(H)=\mathcal H^{2m}(\R^n)$, the spectral theorem implies
\begin{align*}
\||D|^mP_{ac}(H)(H-\lambda\mp i\theta)^{-1}|D|^m\|_{L^2-L^2}
&\lesssim \|P_{ac}(H)\<H\>(H-\lambda\mp i\theta)^{-1}\|_{L^2-L^2}\\
&=\sup_{t\in [0,\infty)}(\<t\>|t-\lambda\mp i\theta|^{-1})
\lesssim \nu^{-1}
\end{align*}
where we have used the fact $ \mathrm{\mathop{dist}}_{\,\R}(\lambda,[0,\infty))\ge \nu$. This proves \eqref{eq3.15} for $0\le \gamma<m-1/2$.

Next, the case $m-n/2<\gamma<0$ follows easily from the previous case. Indeed, by letting
$
\lambda=n+\gamma,\ \alpha=-m$ and $\beta=m-\gamma
$
in Lemma \ref{lemma_fractional}, one has $|x|^{-m+\gamma}|D|^\gamma|x|^m\in \mathbb B(L^2)$. We also have $|x|^m|D|^{\gamma}|x|^{-m+\gamma}\in \mathbb B(L^2)$ by taking the adjoint. Therefore, \eqref{eq3.14} with $\gamma=0$ implies that
\begin{align*}
&\||x|^{-m+\gamma}|D|^\gamma P_{ac}(H)(H-\lambda\mp i\theta)^{-1}|D|^\gamma |x|^{-m+\gamma}f\|_{L^2}\\
&\lesssim \||x|^{-m}P_{ac}(H)(H-\lambda\mp i\theta)^{-1}|x|^{-m}\|_{L^2-L^2}\||x|^m|D|^{\gamma}|x|^{-m+\gamma}f\|_{L^2}\\
&\lesssim \|f\|_{L^2}
\end{align*}
for $m-n/2<\gamma<0$ uniformly in $\lambda\in \R$ and $\theta\in (0,1]$. This completes the proof of \eqref{eq3.14}.
\end{proof}

Now we are in a position to give the proof of Theorems \ref{Kato-smoothing} and \ref{theorem_2}, which actually are the direct consequences of the following theorem.
\begin{theorem}
\label{Kato-smoothing_2}
Let $n>2m$, $H=(-\Delta)^m+V$ and $|V(x)|\le C \<x\>^{-s}$ for $s>2m$. Assume that $H$ has no positive eigenvalues and no zero resonance/eigenvalue. Then the following statements ( i.e. Kato smoothing estimates ) were proved:
\vskip0.2cm
 (i) If $m-n/2< \gamma< m-1/2$,  then
\begin{align*}
\big\||x|^{-m+\gamma}|D|^\gamma e^{itH}P_{ac}(H)\psi_0\big\|_{L^2_tL^2_x}&\lesssim \norm{\psi_0}_{L^2_x},\\
\left\||x|^{-m+\gamma}|D|^\gamma \int_0^t e^{-i(t-s)H}P_{ac}(H)F(s)ds\right\|_{L^2_tL^2_x}&\lesssim \norm{|x|^{m-\gamma}|D|^{-\gamma}F}_{L^2_tL^2_x}.
\end{align*}
In particular, as $\gamma=0$, the following local decay estimate holds:
\begin{align*}
\big\||x|^{-m}e^{itH}P_{ac}(H)\psi_0\big\|_{L^2_tL^2_x}&\lesssim \norm{\psi_0}_{L^2_x},\\
\left\||x|^{-m} \int_0^t e^{-i(t-s)H}P_{ac}(H)F(s)ds\right\|_{L^2_tL^2_x}&\lesssim \norm{|x|^{m}F}_{L^2_tL^2_x}.
\end{align*}
(ii) If $\gamma=m-1/2$, then for any $\epsilon>0$,
\begin{align*}
\big\|\<x\>^{-1/2-\epsilon} |D|^{m-1/2}e^{itH}P_{ac}(H)\psi_0\big\|_{L^2_tL^2_x}&\lesssim \norm{\psi_0}_{L^2_x},\\
\left\|\<x\>^{-1/2-\epsilon} |D|^{m-1/2}\int_0^t e^{-i(t-s)H}P_{ac}(H)F(s)ds\right\|_{L^2_tL^2_x}&\lesssim \norm{\<x\>^{1/2+\epsilon} |D|^{-m+1/2}F}_{L^2_tL^2_x}.
\end{align*}
\end{theorem}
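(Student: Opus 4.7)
The plan is to combine the uniform weighted resolvent bounds furnished by Corollary \ref{corollary_3} with Kato's theory of smooth perturbations. Recall that a closed densely defined operator $A$ on $L^{2}(\R^{n})$ is called $H$-supersmooth if
\begin{equation*}
M(A):=\sup_{z\in\C\setminus\R}\|A(H-z)^{-1}A^{*}\|_{L^{2}\to L^{2}}<\infty,
\end{equation*}
and by Kato's theorem this is equivalent to the smoothing estimate $\|Ae^{itH}\phi\|_{L^{2}_{t}L^{2}_{x}}\lesssim M(A)^{1/2}\|\phi\|_{L^{2}}$. Moreover, by the bilinear version (Kato \cite{Kat}, and the abstract framework of D'Ancona \cite{DAn}), if $A,B$ are both $H$-supersmooth then
\begin{equation*}
\Big\|A\int_{0}^{t}e^{-i(t-s)H}B^{*}F(s)\,ds\Big\|_{L^{2}_{t}L^{2}_{x}}\lesssim \sqrt{M(A)M(B)}\,\|F\|_{L^{2}_{t}L^{2}_{x}}.
\end{equation*}

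For the homogeneous estimates, I would set $A:=G P_{ac}(H)$ with $G=|x|^{-m+\gamma}|D|^{\gamma}$ in case (i) and $G=\<x\>^{-1/2-\ep}|D|^{m-1/2}$ in case (ii). Since $P_{ac}(H)$ is self-adjoint, $A^{*}=P_{ac}(H)G^{*}$, hence
\begin{equation*}
A(H-z)^{-1}A^{*}=GP_{ac}(H)(H-z)^{-1}P_{ac}(H)G^{*},
\end{equation*}
which is uniformly bounded on $L^{2}$ by Corollary \ref{corollary_3}. Combining this with $P_{ac}(H)e^{itH}=e^{itH}P_{ac}(H)$ then yields exactly the homogeneous estimates $\|Ge^{itH}P_{ac}(H)\psi_{0}\|_{L^{2}_{t}L^{2}_{x}}\lesssim \|\psi_{0}\|_{L^{2}}$, for both choices of $G$.

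For the inhomogeneous estimates, I would observe that the right-hand side weight is precisely $(G^{*})^{-1}F$ with the same $G$ as above; indeed $(|D|^{\gamma}|x|^{-m+\gamma})^{-1}=|x|^{m-\gamma}|D|^{-\gamma}$ and similarly in case (ii). Writing $\tilde F:=(G^{*})^{-1}F$ so that $F=G^{*}\tilde F$, and using $P_{ac}(H)^{2}=P_{ac}(H)$ together with the commutation of $P_{ac}(H)$ with $e^{-i(t-s)H}$, one rewrites
\begin{equation*}
GP_{ac}(H)\int_{0}^{t}e^{-i(t-s)H}F(s)\,ds=A\int_{0}^{t}e^{-i(t-s)H}A^{*}\tilde F(s)\,ds,
\end{equation*}
where $A=GP_{ac}(H)$. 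Applying the bilinear version of Kato's theorem with $B=A$ then yields the desired bound $\lesssim \|\tilde F\|_{L^{2}_{t}L^{2}_{x}}$, which is precisely the right-hand side of the statement.

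The only delicate point in this plan is the use of the bilinear theorem for the \emph{truncated} Duhamel integral $\int_{0}^{t}$ rather than the full-line $\int_{\R}$: in the diagonal $L^{2}_{t}\to L^{2}_{t}$ regime the Christ-Kiselev lemma is unavailable, so one must rely instead on the spectral-theoretic $TT^{*}$ argument of Kato, suitably formalized as in \cite{DAn}, which handles the truncation without loss. With that abstract input in hand, both cases (i) and (ii) of the theorem (as well as the special $\gamma=0$ local decay case) follow by exactly the same template; the only change between them is the explicit form of the operator $G$ and the corresponding range of resolvent bounds supplied by Corollary \ref{corollary_3}.
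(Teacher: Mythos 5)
Your proposal matches the paper's proof: the paper deduces the theorem directly from Corollary~\ref{corollary_3} (the $H$-supersmoothness of $G\,P_{ac}(H)$ for the two choices of $G$) and Kato's smooth perturbation theory \cite{Kat}, citing \cite{DAn} precisely for the truncated Duhamel integral in the inhomogeneous estimate, which is the same delicate point you flag. The only difference is that you spell out the $TT^{*}$/bilinear mechanics which the paper compresses into a one-line reference.
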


\begin{proof}
These estimates are direct consequences of Corollary \ref{corollary_3} ( i.e.  the $H$-supersmoothness of $|x|^{-m+\gamma}|D|^{-\gamma}P_{ac}(H)$ and $\<x\>^{-1/2-\epsilon}|D|^{m-1/2}P_{ac}(H)$ ) and Kato's smooth perturbation theory \cite{Kat} (see also \cite{DAn} for the inhomogeneous estimates).
\end{proof}

Finally, note that  if $(x\cdot\nabla) V(x)\le 0$ and $\lim\limits_{x\to\infty}V(x)=0$, then $V(x)\ge0$. In fact, it can be easily concluded by the following integral
$$V(x)=-\int^\infty_1\frac{d}{ds}\big(V(sx)\big)ds\ge0, \ \ x\neq0,$$
where $\frac{d}{ds}\big(V(sx)\big)=\frac{1}{s}(sx\cdot \nabla)V(sx)\le 0$.  Thus, we  obtain  under these conditions that $H=H_0+V$ is a nonnegative self-adjoint operator and that $P_{ac}(H)=\Id$ in the previous results.

\section{Strichartz estimates of $e^{itH}$}
 \label{section 4}
In this section, we prove Theorem \ref{theorem_4_1}. The proof relies on a method by Rodnianski-Schlag \cite{RoSc} (see also \cite{BPST2} for the homogenous endpoint estimate and \cite{BoMi} for the double endpoint estimate). This method requires the corresponding estimates for the free evolutions which are summarized the following lemma.

\begin{lemma}
\label{lemma_4_2}
Let $H_0=(-\Delta)^m$ with $m\in \N$, $(p_1,q_1),(p_2,q_2)$ satisfy \eqref{admissible} with $\alpha=n/2$. Then,
\begin{align}
\label{eq4.5}
\||D|^{2(m-1)/p_1}e^{itH_0}\psi_0\|_{L^{p_1}_tL^{q_1}_x}&\lesssim \|\psi_0\|_{L^2_x},\\
\label{eq4.6}
\left\||D|^{2(m-1)/p_1}\int_0^te^{i(t-s)H_0}F(s)ds\right\|_{L^{p_1}_tL^{q_1}_x}&\lesssim \||D|^{2(1-m)/p_2}F\|_{L^{p_2'}_tL^{q_2'}_x}.
\end{align}
\end{lemma}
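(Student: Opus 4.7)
This is a classical Strichartz estimate with gain of regularity, of Kenig--Ponce--Vega type, for the free polyharmonic Schr\"odinger group. The plan is to combine a frequency-localized $L^1\to L^\infty$ dispersive bound with the abstract Keel--Tao endpoint Strichartz theorem, and then reassemble via Littlewood--Paley theory. Throughout, let $P_k$ denote a standard Littlewood--Paley projector to $|\xi|\sim 2^k$, and $\tilde P_k$ a slight enlargement with $P_k\tilde P_k=P_k$. On the support of $\tilde P_k$ the Hessian of $|\xi|^{2m}$ is non-degenerate of size $2^{2k(m-1)}$, so after the rescaling $\xi=2^k\eta$ the kernel of $P_ke^{itH_0}$ becomes $2^{kn}\int e^{i2^kx\cdot\eta+it2^{2mk}|\eta|^{2m}}\chi(\eta)d\eta$ with $\chi$ smooth and supported in $|\eta|\sim 1$. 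Stationary phase on this compact annulus yields the uniform bound $(1+|t|2^{2mk})^{-n/2}$ for the phase integral, so
\[
\|P_ke^{itH_0}\|_{L^1\to L^\infty}\lesssim 2^{kn(1-m)}|t|^{-n/2},\quad t\ne 0.
\]

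\textbf{Single-scale Strichartz via Keel--Tao.} Viewing $U_k(t):=P_ke^{itH_0}$ as a family of $L^2$-bounded operators, one has $\|U_k(t)\|_{L^2\to L^2}\le 1$ and $\|U_k(t)U_k(s)^*\|_{L^1\to L^\infty}\lesssim 2^{kn(1-m)}|t-s|^{-n/2}$. The Keel--Tao endpoint theorem with dispersive exponent $\sigma=n/2$ then gives, for any $n/2$-admissible pairs $(p_1,q_1),(p_2,q_2)$,
\[
\|P_ke^{itH_0}\psi_0\|_{L^{p_1}_tL^{q_1}_x}\lesssim 2^{-2k(m-1)/p_1}\|P_k\psi_0\|_{L^2_x},
\]
\[
\Big\|P_k\int_0^te^{i(t-s)H_0}\tilde P_kF(s)\,ds\Big\|_{L^{p_1}_tL^{q_1}_x}\lesssim 2^{-2k(m-1)(1/p_1+1/p_2)}\|\tilde P_kF\|_{L^{p_2'}_tL^{q_2'}_x}.
\]
The exact dependence of the constants on the dispersive size $2^{kn(1-m)}$ is pinned down by the time rescaling $U(t)\mapsto U(\lambda t)$ with $\lambda=2^{2mk}$, which normalizes the $L^1\to L^\infty$ decay to have unit constant and produces the exponent $1/(\sigma p_j)=1/2-1/q_j$ after changing back.

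\textbf{Reassembly and the main obstacle.} Since $|D|^{2(m-1)/p_1}P_k$ acts on a given function as a bounded Fourier multiplier with norm $\sim 2^{2k(m-1)/p_1}$, multiplying both sides of the single-scale bounds by $|D|^{2(m-1)/p_1}$ on the left and by $|D|^{2(1-m)/p_2}$ on the inhomogeneity absorbs the $k$-dependent prefactors, giving clean frequency-localized estimates. Square-summing in $k$ then completes the proof: on the $L^{p_1}_tL^{q_1}_x$ side, the Littlewood--Paley square function theorem (valid for $1<p_1,q_1<\infty$) together with Minkowski's inequality in the form $L^{p_1}L^{q_1}(\ell^2)\supset \ell^2(L^{p_1}L^{q_1})$ (valid because $\min(p_1,q_1)\ge 2$) bounds the full norm by $(\sum_k\|P_k(\cdot)\|^2)^{1/2}$; on the $L^{p_2'}_tL^{q_2'}_x$ side, the same two tools apply in the opposite direction (first Minkowski via $\max(p_2',q_2')\le 2$, then Littlewood--Paley) to bound the $\ell^2_k$-sum of frequency-localized norms by the unprojected norm. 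Plancherel finally identifies $\sum_k\|P_k\psi_0\|_{L^2}^2\sim\|\psi_0\|_{L^2}^2$. The delicate point is the double endpoint $(p_1,q_1)=(p_2,q_2)=(2,2n/(n-2))$: the Christ--Kiselev lemma does not apply (its hypothesis $p_1>p_2'$ fails), so one cannot reduce the retarded integral to $\int_\R$; fortunately Keel--Tao supplies the retarded inhomogeneous estimate at the endpoint directly, which is exactly what is needed. A secondary bookkeeping issue is tracking the precise scaling of the Keel--Tao constants with the dispersive prefactor $2^{kn(1-m)}$, which is cleanly handled by the time-dilation argument above.
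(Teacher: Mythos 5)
Your proof is correct and follows essentially the same route the paper takes, namely frequency-localized dispersive bounds combined with Keel--Tao and Littlewood--Paley reassembly; the paper itself simply cites this as ``dispersive estimates for $|D|^{n(m-1)}e^{itH_0}$ and Keel--Tao's theorem'' and refers to Pausader and \cite{MiYa1} for details, which is precisely the argument you have written out. Your scaling bookkeeping for the Keel--Tao constants and the observation that Keel--Tao (rather than Christ--Kiselev) must supply the retarded estimate at the double endpoint are both accurate.
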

\begin{proof}
The lemma follows from dispersive estimates for $|D|^{n(m-1)}e^{itH_0}$ and Keel-Tao's theorem \cite{KeTa}. We refer to \cite[Section 3]{Pau} and \cite[Appendix A]{MiYa1} for details.
\end{proof}

\begin{proof}[\underline{Proof of Theorem \ref{theorem_4_1}}]
Note that \eqref{eq4.4} follows from \eqref{eq4.3} and Sobolev's inequality since
$$
\frac{1}{q_1}-\frac{1}{q}=\left(\frac{2m}{n}-\frac{2}{n}\right)\frac{1}{p_1}=\frac{2(m-1)}{p_1n}
$$
as long as $(p_1,q_1)$ is $n/2$-admissible and $(p_1,q)$ is $n/(2m)$-admissible. It is thus enough to show \eqref{eq4.3}. Let $\Lambda_p=|D|^{2(m-1)/p}$ for short. For a given self-adjoint operator $A$, we set
$$
\Gamma_AF(t,x)=\int_0^t e^{i(t-s)A}F(s,x)ds.
$$
Then \eqref{eq4.3} can be decomposed into the following two estimates
\begin{align}
\label{eq4.7}
\|\Lambda_{p_1}e^{itH}P_{ac}(H)\psi_0\|_{L^{p_1}_tL^{q_1}_x}&\lesssim \|\psi_0\|_{L^2_x},\\
\label{eq4.8}
\|\Lambda_{p_1}\Gamma_HP_{ac}(H)F\|_{L^{p_1}_tL^{q_1}_x}&\lesssim \|\Lambda_{p_2}^{-1}F\|_{L^{p_2'}_tL^{q_2'}_x}.
\end{align}

We first consider \eqref{eq4.8} whose proof relies on the following Duhamel formulas:
\begin{align*}
\Gamma_H=\Gamma_{H_0}-i\Gamma_{H_0}V\Gamma_H=\Gamma_{H_0}-i\Gamma_{H}V\Gamma_{H_0}
\end{align*}
(see e.g. \cite[Section 4]{BoMi} for the proof of these formulas). By \eqref{eq4.6}, one first has
\begin{align}
\nonumber
\|\Lambda_{p_1}\Gamma_HP_{ac}(H)F\|_{L^{p_1}_tL^{q_1}_x}
&\le \|\Lambda_{p_1}\Gamma_{H_0}P_{ac}(H)F\|_{L^{p_1}_tL^{q_1}_x}
+\|\Lambda_{p_1}\Gamma_{H_0}V\Gamma_HP_{ac}(H)F\|_{L^{p_1}_tL^{q_1}_x}\\
\label{eq4.9}
&\lesssim \|\Lambda_{p_2}^{-1}P_{ac}(H)F\|_{L^{p'_2}_tL^{q_2'}_x}+\|\Lambda_{p_1}\Gamma_{H_0}VP_{ac}(H)\Gamma_HF\|_{L^{p_1}_tL^{q_1}_x}.
\end{align}
Since Lemma \ref{lemma_projection} implies
$$
\|\Lambda_{p_2}^{-1}P_{ac}(H)F\|_{L^{q_2'}_x}\lesssim \|\Lambda_{p_2}^{-1}F\|_{L^{q_2'}_x},
$$
hence the term $\Lambda_{p_2}^{-1}P_{ac}(H)F$ in \eqref{eq4.9} satisfies the desired estimate
\begin{align}
\label{eq4.10}
\|\Lambda_{p_2}^{-1}P_{ac}(H)F\|_{L^{p'_2}_tL^{q_2'}_x}\lesssim \|\Lambda_{p_2}^{-1}F\|_{L^{p_2'}_tL^{q_2'}_x}.
\end{align}
For the operator $\Lambda_{p_1}\Gamma_{H_0}VP_{ac}(H)\Gamma_HF$, we first apply \eqref{eq4.6} with $(p_2,q_2)=(2,\frac{2n}{n-2})$ to obtain
\begin{align}
\label{eq4.11}
\|\Lambda_{p_1}\Gamma_{H_0}VP_{ac}(H)\Gamma_HF\|_{L^{p_1}_tL^{q_1}_x}\lesssim \|\Lambda_2^{-1}VP_{ac}(H)\Gamma_HF\|_{L^2_tL^{\frac{2n}{n+2}}_x}.
\end{align}
Recall here that $V=vw$ with $v,w\in L^{n/m}$. With the equalities
$$
\frac{n+2m}{2n}-\frac{n+2}{2n}=\frac{m-1}{n},\quad \frac{n+2m}{2n}=\frac mn+\frac12
$$
at hand, we see from Sobolev's and H\"older's inequalities that
\begin{align}
\label{eq4.12}
\|\Lambda_2^{-1}VP_{ac}(H)\Gamma_HF\|_{L^2_tL^{\frac{2n}{n+2}}_x}\lesssim \|v\|_{L^\frac nm_x}\|wP_{ac}(H)\Gamma_HF\|_{L^2_tL^2_x}.
\end{align}
Now we use again the Duhamel formula to estimate the right hand side of \eqref{eq4.12} as
\begin{align}
\label{eq4.13}
\|wP_{ac}(H)\Gamma_HF\|_{L^2_tL^2_x}\le \|wP_{ac}(H)\Gamma_{H_0}F\|_{L^2_tL^2_x}+\|wP_{ac}(H)\Gamma_{H}V\Gamma_{H_0}F\|_{L^2_tL^2_x}.
\end{align}
Since $\|wP_{ac}(H)f\|_{L^2}\lesssim \|wf\|_{L^2}$ (which can be verified by the same proof as that of \eqref{eq_lemma_projection_1}), the first term of the right hand side of \eqref{eq4.13} is dominated by $\|w\Gamma_{H_0}F\|_{L^2_tL^2_x}$. Moreover, since $|x|^{-m}P_{ac}(H)$ is $H$-supersmooth by Corollary \ref{corollary_3}, so is $wP_{ac}(H)$ and hence $wP_{ac}(H)\Gamma_{H}w\in \mathbb B(L^2_tL^2_x)$ by the same argument as in the proof of Theorem \ref{Kato-smoothing_2}. Therefore, the second term of the right hand side of \eqref{eq4.13} is dominated by $\|v\Gamma_{H_0}F\|_{L^2_tL^2_x}$. Since $|v|=|w|$, we conclude that
\begin{align}
\nonumber
\|wP_{ac}(H)\Gamma_HF\|_{L^2_tL^2_x}
&\lesssim \|w\Gamma_{H_0}F\|_{L^2_tL^2_x}\\
\nonumber
&\lesssim \|w\|_{L^{\frac nm}_x}\|\Lambda_2\Gamma_{H_0}F\|_{L^2_tL^{\frac{2n}{n-2}}_x}\\
\label{eq4.14}
&\lesssim \|F\|_{L^{p_2'}_tL^{q_2'}_x},
\end{align}
where we have used H\"older's and Sobolev's inequalities in the second line and \eqref{eq4.6} with $(p_1,q_1)=(2,\frac{2n}{n-2})$ in the last line. Finally, \eqref{eq4.9}--\eqref{eq4.14} yield the desired bound \eqref{eq4.8}.

Next, the estimate \eqref{eq4.7} can be obtained similarly by using \eqref{eq1.2}, \eqref{eq4.5}, \eqref{eq4.6} and the usual Duhamel formula $U_H=U_{H_0}-i\Gamma_{H_0}VU_H$, where $U_{H_0}=e^{itH_0}$ and $U_H=e^{itH}$. Since the proof is essentially same as (or even simpler than) that of \eqref{eq4.8}, we omit these details.
\end{proof}

\section{The proofs of Theorems \ref{LIP-free case} and \ref{Sobolev estimates}  }

\label{section 5}
\subsection{The proof of Theorem \ref{LIP-free case}} The statements (i) and (iii) in  Theorems \ref{LIP-free case}  are due to Agmon \cite{Agmon} (also see Remark \ref{remark 2.1} for more comments). Hence, it remains to show the statement (ii) of Theorems \ref{LIP-free case}. Actually, comparing with the conclusion (i) of Theorems \ref{LIP-free case}, it suffices to add the continuity of the resolvent $R_0(z)$ at  $z=0 $ in the uniform operator topology of $\mathbb B(L^2_s, L^2_{-s})$ if $s>m$.  
Following the arguments of Agmon \cite[Theorem 4.1]{Agmon}, we first prove  the weak continuity of $R_0(z)$ on $\overline{\C^{\pm}}$, and then lift to the uniform-topology continuity of $R_0(z)$ by the compact arguments.

\begin{proof}[\underline{The proof of Theorem \ref{LIP-free case}(ii)}:]  The proof is divided into the following three steps:
\vskip0.2cm
{\it Step 1}. Let $s>m$ and $f,g\in L^2_{s}$. We first prove that  $\<R_0(z)f, g\>$, which is an analytic function on $\C\setminus[0,\infty)$, has a continuous boundary value on the two sides of  $[0,\infty)$. Clearly, for any $\lambda>0$, by the statement (i) of  Theorem \ref{LIP-free case}, we have
\begin{equation}
\label {eq5.1}
\lim_{\C^\pm \ni z\rightarrow \lambda}\<R_0(z)f, g\>=\<R^\pm_0(\lambda)f, g\>,
\end{equation}
where $R^\pm_0(\lambda)=R_0(\lambda\pm i0)$ are defined in \eqref{eq2.4}. For $\lambda=0$, we need to show the limit
\begin{equation}
\label {eq5.2}\lim_{\C^\pm \ni z\rightarrow 0}\<R_0(z)f, g\>=\<R_0(0)f, g\>
 \end{equation} holds, where $R_0(0)=(-\Delta)^{-m}$. Note that $|\<R_0(z)f, g\>|\le C \|f\|_{L^2_s}\|g\|_{L^2_s}$ holds uniformly in $z\in \C\setminus[0,\infty)$.  So we may assume $f,\ g\in S(\R^n)$ (the Schwartz function class) by a density argument. By the kernel function expansion of $(-\Delta-z)^{-1}$ and the decomposition formula
\begin{equation}
R_{0}(z)=\frac{1}{mz}\sum_{\ell=0}^{m-1}z_\ell\big(-\Delta-z_\ell\big)^{-1}, \ z_\ell=z^{\frac{1}{m}}e^{i\frac{2\ell\pi}{m}},\ z\in \C\setminus[0,\infty),
\end{equation}
 we can establish the kernel expansion of $R_0(z)$ as $z$ close to $0$:
$$R_0(z)(x,y)=c_n|x-y|^{2m-n}+E(z,x,y),$$
where $c_n|x-y|^{2m-n}$ is the kernel of $R_0(0)$ and the integral operator $E(z)$ with the kernel $E(z,x,y)$ belongs to $\mathbb B(L^2_\sigma,L^2_{-\sigma})$ for any $\sigma>n/2+2$ and satisfies $\|E(z)\|_{L^2_\sigma-L^2_{-\sigma}}=O(|z|^\epsilon)$ for some $\epsilon>0$ depending on $n, m$ (see e.g. \cite[Proposition  2.4]{FSWY}). Hence it follows that
$$|\<(R_{0}(z)-R_0(0))f, g\>|\le \|E(z)\|_{L^2_\sigma-L^2_{-\sigma}} \|f\|_{L^2_\sigma}\|g\|_{L^2_\sigma}\rightarrow 0,$$
as $z\rightarrow0$  for any $f,g\in S(\R^n)$,  from  which we thus conclude \eqref{eq5.2} by a density argument.

\vskip0.2cm
{\it Step 2}. Let $\H^k_\sigma(\R^n)$ denote the weighted Sobolev space defined by the norm $$\|f\|_{\H^k_\sigma(\R^n)}=\|(1-\Delta)^{k/2}f\|_{L^2_\sigma(\R^n)}.$$  Also define $R^\pm_0(z)=R^\pm(\lambda)$ if $z=\lambda>0$, and $R^\pm_0(z)=R(z)$ if $z\in \overline{\C^\pm}\setminus(0,\infty)$. By Step 1, we know that the operator function $R^\pm{(z)}$ is weak continuous on $\overline{\C^\pm}$.  In this step, we will show that $R^\pm{(z)}$ is continuous on $\overline{\C^\pm} $ in the strong operator topology of $\mathbb B(L^2_s, L^2_{-s})$ for $s>m$.  First, note that the equality $$(1+H_0)R_0(z)f=f+(z+1)R_0(z)f$$ hold for $z\in \C\setminus[0,\infty)$, which leads to the following uniform bounds for any $M>0$.
\begin{equation}
\label{eq5.3}
\|R_0(z)f\|_{\H^{2m}_{-s}(\R^n)}\le C_M \|f\|_{L^2_s(\R^n)}, \ z\in \C\setminus[0,\infty), \ |z|\le M.
\end{equation}
By the weak continuity of $R^\pm_0(z)$ on $\mathbb B(L^2_s, L^2_{-s})$ and \eqref{eq5.3}, we can actually obtain for any $f\in L^2_s$ and any $z_0\in \overline{\C^\pm}$ that
\begin{equation}\label{eq5.4}
\wlim_{\C^\pm \ni z\rightarrow \lambda} R_0^\pm(z)f=R_0^\pm(z_0 )f,\  {\rm in}\  \H^{2m}_{-s}(\R^n), \end{equation}
and $R^{\pm}_0(z)\in B(L^2_s, \H^{2m}_{-s})$ for any $z\in \overline{\C^\pm}$ and $ s>m$.  Since $R^\pm_0(z) f\in\H^{2m}_{-s'} $ for any $f\in L^2_s(\R^n)$ and the  space embedding relation $\H^{2m}_{-s'}\hookrightarrow L^2_{-s}(\R^n)$ is compact for any $0<s'<s$, hence by the compactness, we can lift the weak convergence of \eqref{eq5.4} up to the following strong convergence:
\begin{equation}\label{eq5.5}
\slim_{\C^\pm \ni z\rightarrow z_0} R_0^\pm(z)f=R_0^\pm(z_0 )f,\  {\rm in}\  L^2_{-s}(\R^n),
\end{equation}
where $f\in L^2_s(\R^n)$. Thus we conclude that $R^\pm{(z)}$ is  continuous on $\overline{\C^\pm} $ in the strong operator topology of $\mathbb B(L^2_s, L^2_{-s})$ for $s>m$.
\vskip0.2cm
{\it  Step 3.} Finally, we come to prove that $R^\pm{(z)}$ is  continuous on $\overline{\C^\pm} $ in the uniform operator topology of $\mathbb B(L^2_s, L^2_{-s})$ for $s>m$. To the end, suppose that it is not true by contradiction. Then there exist sequences $\{z_j\}\subset \overline{\C^\pm}$ with $z_j\rightarrow z_0\in \overline{\C^\pm}$, and $\{f_j\}\subset L^2_s(\R^n)$ with $\|f_j\|_{L^2_s}=1$, such that
\begin{equation}\label{eq5.6}\limi_{j\to\infty}\|(R^\pm_0(z_j)-R^\pm_0(z_0))f_j\|_{L^2_{-s}}>0.\end{equation}
Note that $\{f_j\}$ always has a weak convergent subsequence,  so we may assume that $f_j\rightharpoonup f$ in $L^2_{s}(\R^n)$ (in weak sense). Now by using a similar argument in Step 2, we actually can prove that the following convergence holds:
 \begin{equation}\label{eq5.7}
\slim_{j\rightarrow \infty} R_0^\pm(z_j)f_j=R_0^\pm(z_0 )f,\  {\rm in}\  L^2_{-s}(\R^n),
\end{equation}
which clearly gives a contradiction to \eqref{eq5.6}. Thus summing up three Steps above, we have finished the proof of Theorem \ref{LIP-free case} (ii).
\end{proof}

\subsection{The proof of Theorem \ref{Sobolev estimates}}

Theorem \ref{Sobolev estimates} is actually the special case (i.e. $1/p-1/q=(2m-\alpha)/n$) of the estimates \eqref{eq2.3-fract} in the following lemma, which will be proved based on the Fourier method involved with the famous Carleson-Sj\"olin oscillatory integral argument (see \cite[p.69]{So}). Note that the argument of the proof have been used similarly in Sikora-Yan-Yao \cite{SYY}. Here we emphasize that the following results are new as $\alpha\neq 0$ even for the second order case,  and crucial to Kato smoothing estimates studies in this paper.

\begin{lemma}\label{thm2.111}  Let $n>2m$, $H_0=(-\Delta)^m$ and $z\in \mathbb{C}$. Consider
arbitrary auxiliary cutoff function $\psi$ such that
$\psi\in C_0^{\infty}({\mathbb R}),
 \psi(s)\equiv 1$ if $s\in [1/2, 2]$ and  $ \psi$ is supported in the interval  $[1/4, 4]$.
Suppose  also that
 exponent $(1/p, 1/q)\in (0,1)^2$ satisfy the following conditions:
\begin{eqnarray}\label{eq2.4-fract}\min\Big(\frac{1}{p}-\frac{1}{2},\
\frac{1}{2}-\frac{1}{q}\Big)> {1\over 2n},\ \ \frac{2}{n+1}\le\Big(\frac{1}{p}-\frac{1}{q}\Big)\le 1.
\end{eqnarray}
%
Then there exists  positive constants $C_{p,q,\alpha}$ independent of $|z|$ such that
\begin{eqnarray}\label{eq2.3-psi}
\|\left|D|^{\alpha}(H_0-z\right)^{-1}\psi(H_0/|z|) \|_{p\to q}\le
C_{p,q,\alpha}\ |z|^{\frac{n}{2m}(\frac{1}{p}-\frac{1}{q})-\frac{2m-\alpha}{2m}},
\end{eqnarray}
for all $z\in \mathbb{C}^{\pm}\setminus\{0\}$  and $\alpha\in \R$, where $|D|=\sqrt{-\Delta}$. Furthermore, besides of the conditions \eqref{eq2.4-fract},  assume  that $\frac{1}{p}-\frac{1}{q}
\le \frac{2m-\alpha}{n}$  and $2m-n<\alpha \le 2m-{2n\over n+1}$. Then, for all $z\in \mathbb{C}^{\pm}\setminus\{0\}$,
\begin{eqnarray}\label{eq2.3-fract}
\|\left|D|^{\alpha}(H_0-z\right)^{-1}\|_{p\to q}\le C_{p,q,\alpha}\ |z|^{\frac{n}{2m}(\frac{1}{p}-\frac{1}{q})-\frac{2m-\alpha}{2m}}.
\end{eqnarray}
\end{lemma}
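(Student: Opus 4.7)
The plan is to combine the Fourier-side oscillatory-integral method of Kenig-Ruiz-Sogge and Huang-Yao-Zheng \cite{KRS,HYZ} with the scaling structure of $H_0=(-\Delta)^m$, first proving the localized estimate \eqref{eq2.3-psi} and then lifting it to \eqref{eq2.3-fract} by a Littlewood-Paley decomposition in frequency. The key reduction is a dilation $x\mapsto |z|^{-1/(2m)}x$: writing $z=|z|e^{i\theta}$, this turns $|D|^{\alpha}(H_0-z)^{-1}\psi(H_0/|z|)$ into an operator $T_\theta^\alpha$ with symbol $|\xi|^{\alpha}\psi(|\xi|^{2m})/(|\xi|^{2m}-e^{i\theta})$ times an overall scale factor; tracking how $L^p\to L^q$ norms change under this dilation produces exactly the claimed power of $|z|$. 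It thus suffices to prove $\|T_\theta^\alpha\|_{p\to q}\le C_{p,q,\alpha}$ uniformly in $\theta\in[0,2\pi)$.

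For the localized estimate I would analyze the convolution kernel of $T_\theta^\alpha$ in polar coordinates,
\begin{equation*}
K_\theta(x)=c_n\int_0^\infty\frac{r^{n-1+\alpha}\psi(r^{2m})}{r^{2m}-e^{i\theta}}\,\widehat{d\sigma}(r|x|)\,dr,
\end{equation*}
where $\widehat{d\sigma}$ satisfies the stationary-phase decay $|\widehat{d\sigma}(y)|\lesssim\langle y\rangle^{-(n-1)/2}$. When $|\sin\theta|\gtrsim 1$ the denominator is bounded below and a standard multiplier argument gives a uniform bound. For $|\sin\theta|\ll 1$, split
\begin{equation*}
\frac{1}{r^{2m}-e^{i\theta}}=\frac{r^{2m}-\cos\theta}{(r^{2m}-\cos\theta)^{2}+\sin^{2}\theta}+\frac{i\sin\theta}{(r^{2m}-\cos\theta)^{2}+\sin^{2}\theta}.
\end{equation*}
The imaginary part concentrates on the sphere $r^{2m}=\cos\theta$ and, after localization in $r$, reduces to the restriction-extension operator of that sphere, for which Stein-Tomas-Keel-Tao yields the required $L^p\to L^q$ bound precisely in the range \eqref{eq2.4-fract}. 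The principal-value (real) part is handled by the Carleson-Sj\"olin oscillatory-integral estimate exactly as in \cite{So,KRS,HYZ}: the extra factor $r^\alpha$ only modifies the amplitude $\psi(r^{2m})\mapsto r^\alpha\psi(r^{2m})$, which remains smooth and compactly supported in the annulus $r\sim 1$, so the kernel bounds on $K_\theta$ carry over without structural change.

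To upgrade \eqref{eq2.3-psi} to \eqref{eq2.3-fract}, I would choose a Littlewood-Paley partition $1=\sum_{j\in\Z}\widetilde\psi(2^{-2mj}\cdot)$ with $\widetilde\psi$ supported in a dyadic annulus and decompose $|D|^{\alpha}(H_0-z)^{-1}=\sum_j|D|^{\alpha}(H_0-z)^{-1}\widetilde\psi(H_0/2^{2mj})$. The pieces with $2^{2mj}\sim|z|$ are controlled by \eqref{eq2.3-psi}, while for the remaining (non-resonant) pieces the symbol $|\xi|^{\alpha}\widetilde\psi(|\xi|^{2m}/2^{2mj})/(|\xi|^{2m}-z)$ behaves like $|\xi|^{\alpha-2m}\widetilde\psi(|\xi|^{2m}/2^{2mj})$ uniformly in $z$, and Hardy-Littlewood-Sobolev / H\"ormander-Mikhlin then gives the required $L^p\to L^q$ bound under the hypotheses $1/p-1/q\le(2m-\alpha)/n$ and $\alpha>2m-n$. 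A geometric summation over $j$ produces \eqref{eq2.3-fract}. The main obstacle is the critical-region analysis in Step 2, namely obtaining the uniform-in-$\theta$ Stein-Tomas and Carleson-Sj\"olin bounds as $\sin\theta\to 0$; this is precisely where the threshold $1/p-1/q\ge 2/(n+1)$ and the sharp upper bound $\alpha\le 2m-2n/(n+1)$ come from, and any slackness in the endpoint conditions of \eqref{eq2.4-fract} would violate the Stein-Tomas restriction theorem.
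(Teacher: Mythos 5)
Your proposal is correct and uses the same core ingredients as the paper (scaling to reduce to $|z|\sim 1$, stationary-phase analysis of the resolvent kernel, Stein--Tomas restriction and the Carleson--Sj\"olin oscillatory-integral estimate in the range \eqref{eq2.4-fract}), but the decompositions are organized differently. The paper keeps $z=(1+i\varepsilon)^{2m}$ and splits the kernel in frequency as $K^\varepsilon=K_1+K_2$ (where $K_1$ carries the cutoff $\psi(H_0)$, exactly the localized estimate \eqref{eq2.3-psi}), handles $K_2$ by direct $S^{-2m+\alpha}_{1,0}$ symbol estimates and Young's inequality, then applies stationary phase to $\int_{S^{n-1}}e^{isx\cdot\omega}\,d\omega$ to extract the factor $|x|^{-(n-1)/2}e^{\pm i|x|}$, splits spatially $K_1=K'+K''$ and runs a dyadic Carleson--Sj\"olin argument on $K''$; the real and imaginary parts of the resolvent are never separated. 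You instead split $1/(r^{2m}-e^{i\theta})$ into its real (principal-value) and imaginary (approximate delta on the sphere) parts, treating the imaginary piece as a restriction--extension operator via Stein--Tomas and the principal-value piece by Carleson--Sj\"olin, then recover \eqref{eq2.3-fract} from \eqref{eq2.3-psi} by a Littlewood--Paley sum in frequency using $\alpha>2m-n$ and $1/p-1/q\le(2m-\alpha)/n$. Your route is closer to the original Kenig--Ruiz--Sogge organization and makes the spectral-measure/restriction structure explicit; the paper's route avoids the real/imaginary split (which only cleanly corresponds to the boundary value $\varepsilon\to 0$) and bounds the full oscillatory kernel at once, which is slightly more direct when one must keep the $\varepsilon$-dependence explicit. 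One small caution in your version: Stein--Tomas in its usual form only gives the dual pairs $q=p'$ with $1/p-1/q\ge 2/(n+1)$; to reach the whole non-dual range \eqref{eq2.4-fract} for the restriction--extension piece you still need the oscillatory-integral/Carleson--Sj\"olin machinery (or the Guti\'errez-type extension estimates), just as the paper invokes for $K''$, so the imaginary part does not really come for free from restriction theory alone.
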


\begin{proof}
By a scaling argument, we may assume $z=e^{i\theta}$ with
$0<|\theta|\le \pi$. If $\delta<|\theta|\le\pi$ for any small $\delta>0$,
  then $|D|^\alpha (H_0-e^{i\theta})^{-1}$ is  a standard constant coefficient Fourier multiplier
  operator of order $-2m+\alpha$ with the symbol $|\xi|^\alpha(|\xi|^{2m}-e^{i\theta})^{-1}$.
  Hence the estimate  \eqref{eq2.3-fract}
  follows from the standard Sobolev estimates.
 A similar argument shows that for any $p \le q$ the multiplier $|D|^\alpha(H_0-e^{i\theta})^{-1}\psi(H_0)$
 is bounded as as operator from $L^p$ to $L^q$.
Thus we may assume that $0<|\theta|\le \delta$ (that is, $z$ belongs to cone neighborhood containing positive real line),  and by symmetry it is enough  to consider only the case $\Im z>0$.

Using the reduction above, we  may set $z=(\lambda+i\lambda\varepsilon)^{2m}$ for some $\lambda\sim 1$
 and $0<\varepsilon\ll1$. Since $|z|\sim\lambda^{2m}\sim 1$, by a scaling argument again in $\lambda$,
 it suffices to estimate $|D|^\alpha(H_0-(1+i\varepsilon)^{2m})^{-1
 }$     and $|D|^\alpha(H_0-(1+i\varepsilon)^{2m})^{-1
 }\psi(H_0)$
uniformly  for  $0<\varepsilon\ll1$.
Let $K^\varepsilon$ be the convolution kernel of $|D|^\alpha(H_0-(1+i\varepsilon))^{-1 }$.  Then Fourier transform gives that
$$K^\varepsilon=\mathcal{F}^{-1}\Big(|\xi|^\alpha \big(|\xi|^{2m}-(1+i\varepsilon)^{2m}\big)^{-1}\Big).$$
Decompose $K^\varepsilon=K_1+K_2$, where
$$K_1=\mathcal{F}^{-1}\Big(\frac{|\xi|^\alpha\psi(|\xi|^{2m})}{|\xi|^{2m}-(1+i\varepsilon)^{2m}}\Big),\quad K_2=\mathcal{F}^{-1}\Big( \frac{|\xi|^\alpha(1-\psi(|\xi|^{2m}))}{|\xi|^{2m}-(1+i\varepsilon)^{2m}}\Big).$$
To show \eqref{eq2.3-psi}
and \eqref{eq2.3-fract},  it is crucial to verify  that the operator $K_1*f$ satisfies \eqref{eq2.3-psi},
since \eqref{eq2.3-fract} can immediately follows by combining $K_1*f$ with the simpler part $K_2*f$.

\vskip0.2cm

{\it Estimate for  $K_2*f$.}  By the support property of $\psi$ the symbol of $K_2$ satisfies that
 $$\Big|D^\beta\Big( \frac{|\xi|^\alpha(1-\psi(|\xi|^{2m}))}{|\xi|^{2m}-(1+i\varepsilon)^{2m}}\Big)\Big|\le
 C_{\alpha\beta}(1+|\xi|)^{-2m+\alpha-|\beta|}, \ \ \xi\neq 0,$$
for any $\beta\in \N_0^n$. Hence by the Fourier transform  we obtain that $|K_2(x)|\le C_N |x|^{2m-\alpha-n-N}$ for any $N\in \mathbb{N}_0$. Then
  Young's inequality and interpolation (note that $2m-\alpha<n$) give that
 \begin{equation}\label{eq2.7-fract}\|K_2*f\|_{L^q}\le C_{p,q}\|f\|_{L^p}
  \end{equation}
  for all $(p,q)$ satisfying $0\le\frac{1}{p}-\frac{1}{q}\le {2m-\alpha\over n}$ and $1<p\le q<\infty$.
\vskip0.2cm
{\it Estimate for  $K_1*f$.} In order to apply the stationary phase method to $K_1$, we first write
  \begin{equation}\label{eq2.8-fract}K_1(x)=\int _{\R ^n}\frac{e^{ix\xi}\
  \widetilde{\psi}(|\xi|)}{|\xi|^{2m}-(1+i\varepsilon)^{2m}}\ d\xi=
  \int_0^\infty\frac{s^{n-1}\widetilde{\psi}(s)}{s-1-i\varepsilon}\
  \Big(\int_{S^{n-1}}e^{isx\omega}d\omega\Big) ds,\end{equation}
where $\xi=s\omega$, $\widetilde{\psi}(s)=s^\alpha \psi(s^{2m})(s^{2m-1}+s^{2m-2}(1+i\varepsilon)+\ldots+(1+i\varepsilon)^{2m-1})^{-1}$.


Note that $K_1$ is the Fourier transform of a compactly supported distribution including taking
limits with $\varepsilon $ goes to $\pm0$ so
$|K_1(x)|\le C$  for  all $|x|\le 1$.  To handle the remaining
case $|x|>1$, we recall the following stationary phase formula for the Fourier
transform of a smooth measure on hypersurface~$S^{n-1}$(see e.g. \cite[p.51]{So}):
\begin{equation}\label{surface-fract}\int_{S^{n-1}}e^{iy\omega}d\omega
=|y|^{-\frac{n-1}2}c_+(y)e^{i|y|}+|y|^{-\frac{n-1}2}c_-(y)e^{-i|y|},
\end{equation}
where, for $|y|\ge 1/4$, the coefficients satisfy
\begin{equation}\label{surface estmates-fract}
\Bigl|\frac{\partial^\beta}{\partial y^\beta}c_+(y)\Bigr|
+\Bigl|\frac{\partial^\beta}{\partial y^\beta}c_-(y)\Bigr|
\le C_\alpha|y|^{-|\beta|}, \quad \beta\in \mathbb{N}_0.
\end{equation}
Thus combining \eqref{eq2.8-fract} with \eqref{surface-fract}, one has
 \begin{eqnarray}\label{eq2.11-fract}
 K_1(x)&=&\sum_{\pm}\int_0^\infty\frac{s^{n-1}\widetilde{ \psi}(s)}{s-1-i\varepsilon}\
 \Big(|sx|^{-\frac{n-1}2}c_\pm(sx)e^{\pm is|x|}\Big) ds \nonumber\\
& =& \sum_{\pm}|x|^{-\frac{n-1}{2}}b_\varepsilon^\pm (x) e^{\pm i|x|},\ \ |x|>1/4,
 \end{eqnarray}
where
$$ b_\varepsilon^\pm(x)=\int_{-\infty}^\infty\frac{(s+1)^{\frac{n-1}{2}}
\widetilde{ \psi}(s+1)}{s-i\varepsilon} c_\pm((s+1)x)e^{\pm is|x|} ds.$$
Note that  the  function $s\mapsto (s+1)^{\frac{n-1}{2}}\widetilde{ \psi}(s+1)c_\pm((s+1)x)$ is smooth and compactly supported near $s=0$.
So one can obtain  uniformly in $\varepsilon>0$ that
$$|\partial^\beta b_\varepsilon^\pm(x)|\le C_\beta |x|^{-|\beta|}, \ \ |x|>1/4.
$$
Hence in view of \eqref{eq2.11-fract}, we can further smoothly decompose $K_1=K'+K''$
  in  such a way that $\supp K'\subset B(0, 1)$ (the unit ball of $\R^n$), $K'$ is bounded
   and  $K''$  can  be expressed as
   \begin{eqnarray}\label{eq2.11'-fract}
  K''(x)=\sum_{\pm}|x|^{-\frac{n-1}{2}}a_\pm(x) e^{\pm i|x|},
   \end{eqnarray}
where $a_\pm \in C^\infty(\R ^n)$ satisfy $a_{\pm}(x)=0$ for $|x|\le 1/2$ and
$|\partial^\beta a_\pm(x)|\le C_\beta |x|^{-|\beta|}$ for any $\beta\in \mathbb{N}_0$.
By Young's inequality, we have for all $1\le p \le q \le \infty$ that
\begin{equation}\label{eq2.12-fract}
\|K'*f\|_{L^q}\le C\|f\|_{L^p}
\end{equation}

To estimate  $K''$, we first note that  $|K''(x)|\le (1+|x|)^{-(n-1)/2}$ from the expression \eqref{eq2.11'-fract}.
Hence, for all $1<p\le q<\infty$ satisfying $\frac{n+1}{2n}\le\frac{1}{p}-\frac{1}{q}\le1$, one has
\begin{equation}\label{eq2.13-fract}
\|K''*f\|_{L^q}\le C\|f\|_{L^p}.
\end{equation}
However, this argument does not
give the whole range
of pairs $(p,q)$ for which  \eqref{eq2.13-fract} holds. It is possible to  extend it by using the
oscillatory factor $e^{\pm i|x|}$  of $K''(x)$ in  the formula \eqref{eq2.11'-fract}, i.e.
\begin{equation}\label{eq2.13'-fract}
K''*f(x)=\sum_{\pm}\int_{\R^ n}|x-y|^{-\frac{n-1}{2}}a_\pm(x-y) e^{\pm i|x-y|}f(y)dy.
\end{equation}
Indeed, the phase function $|x-y|$ satisfies the so-called $n\times n$-Carleson-Sj\"olin
conditions, see \cite[p.69]{So}. Hence the celebrated  Carleson-Sj\"olin
argument  can be used to estimate $K''*f$.
\vskip0.2cm
Let $\phi(s)\in C_0^\infty(\R )$ be a such function that supp~$\phi\in [\frac{1}{2},2]$ and
$\sum_{\ell=0}^\infty\phi(2^{-\ell}s)=1$ for $s\ge {1/2}$. Set $K''_\ell(x)=
\phi(2^{-\ell}|x|)K''(x)$ for all $\ell=0,1,2,\ldots$, so
\begin{equation}\label{eq2.13''-fract}
 K''*f(x)=\sum_{\ell=0}^\infty (K_\ell''*f)(x),
 \end{equation}
where
\begin{equation}\label{eq2.14'-fract}
K_\ell''*f(x):=\int_{\R^ n}|x-y|^{-\frac{n-1}{2}}\phi(2^{-\ell}|x-y|)a_\pm((x-y)) e^{\pm i|x-y|}f(y)dy.
\end{equation}
Put $\lambda=2^{\ell}$. Then the scaling gives
\begin{equation}\label{eq2.15-fract}(K_\ell''*f)(\lambda x)=\lambda^{\frac{n+1}{2}}\int_{\R ^n} w(x-y)e^{\pm \lambda i|x-y|}f(\lambda y)dy,
\end{equation}
where
$
w(x)=|x|^{-\frac{n-1}{2}}\phi(|x|)a_\pm(\lambda x))\in C_0^\infty(\R^n \setminus 0)$
satisfying  $|\partial^\beta w(x)|\le C_\beta$ for any $\beta\in \N_0^n$.  Now we can apply  Carleson-S\"ojlin argument
(see \cite[p.69]{So}) to \eqref{eq2.15-fract}, obtaining that
$$ \|K_\ell''*f\|_q\le C \lambda^{-n/p+(n+1)/2}\|f\|_{L^p},\ \ \lambda=2^\ell, \ \ell=0,1,\ldots,$$
and hence
\begin{equation}\label{eq2.14-fract}
\|K''*f\|_{L^q}\le C\|f\|_{L^p},
\end{equation}
for all $q=\frac{n+1}{n-1}p'$, $1\le p<2n/(n+1)$  as $n\ge3$. Furthermore,
 by interpolating between the estimates
 \eqref{eq2.13-fract} and \eqref{eq2.14-fract},  we can conclude that
 \begin{equation}\label{eq2.15'-fract}
\|K''*f\|_{L^q}\le C\|f\|_{L^p},
\end{equation}
 for all $(p,q)$ such that $\frac{2}{n+1}< \frac{1}{p}-\frac{1}{q}\le1$ and
 $$\min\Big(\frac{1}{p}-\frac{1}{2},\ \frac{1}{2}-\frac{1}{q}\Big)> {1\over 2n}.$$
 Therefore the estimates \eqref{eq2.7-fract}, \eqref{eq2.12-fract} together with \eqref{eq2.15'-fract} yield the estimate
 \eqref{eq2.3-fract} besides of the  boundary line $2/(n+1)=1/p-1/q$. But based on the oscillatory integral presentations \eqref{eq2.13'-fract} and \eqref{eq2.15-fract} of $K''$, this can be proved by showing the weak estimates of the endpoint case, and using duality and real interpolation method.  We refer reader to see  \cite{Gut} or \cite{HZ} for such technical details of the remained cases.  Thus we have finished the proof of Lemma~\ref{thm2.111}.
\end{proof}

\section*{Acknowledgments}
H. Mizutani is partially supported by JSPS KAKENHI Grant-in-Aid for Scientific Research (C) \#JP21K03325.  X. Yao are partially supported by NSFC grants No.11771165 and 12171182. Finally, we also thank the anonymous referee for helpful comments which greatly improve the present version.



\begin{thebibliography}{99}

\bibitem{Agmon}S.  Agmon, \emph{Spectral properties of {S}chr\"{o}dinger operators and
  scattering theory}, Ann. Scuola Norm. Sup. Pisa Cl. Sci. (4) \textbf{2} (1975), no.~2, 151--218.

  \bibitem{Agmon1}
S. Agmon, \emph{Lower bounds for solutions of {S}chr\"{o}dinger equations},
  J. Analyse Math. \textbf{23} (1970), 1--25.

  \bibitem{Ben-Devi}
M. Ben-Artzi and A. Devinatz, \emph{The limiting absorption principle for partial differential operators}, Mem. Amer. Math. Soc. \textbf{66} (1987), no.~364, iv+70.

\bibitem{Ben-Dev} M. Ben-Artzi and A. Devinatz, {\it Local smoothing and convergence properties of
Schr\"odinger-type equations}, J. Funct. Anal. \textbf{101} (1991), 231--254.

\bibitem{Ben-Kla} M. Ben-Artzi, S. Klainerman, \emph{Decay and regularity for the Schr\"odinger equation}, J. Analyse  Math. \textbf{58} (1992), 25--37.


\bibitem{BeLo} J. Bergh, J. L\"ofstr\"om, \emph{Interpolation spaces. An introduction}, Springer-Verlag, Berlin, 1976, Grundlehren der Mathematischen Wissenschaften, No. 223.

 \bibitem{BS} M.~Sh. Birman and M.~Z. Solomyak, \emph{Estimates for the number of negative
  eigenvalues of the {S}chr\"{o}dinger operator and its generalizations},
  Estimates and asymptotics for discrete spectra of integral and differential
  equations ({L}eningrad, 1989--90), Adv. Soviet Math., vol.~7, Amer. Math.
  Soc., Providence, RI, 1991, pp.~1--55.

\bibitem{BoMi} J.-M. Bouclet, H. Mizutani, {\it Uniform resolvent and Strichartz estimates for Schr\"odinger equations with critical singularities}, Trans. Amer. Math. Soc. \textbf{370} (2018), 7293--7333.

%
%

\bibitem{BPST2} N. Burq, F. Planchon, J. G. Stalker, A.S. Tahvildar-Zadeh, {\it Strichartz estimates for the wave and Schr\"odinger equations with potentials of critical decay}, Indiana Univ. Math. J. \textbf{53}  (2004), 1665--1680.


\bibitem{Con-Saut1}P. Constantin, Jean-Claude Saut, {\it Local Smoothing Properties of dispersive Equations}, Journal of the Amer. Math. Soc.  \textbf{1} (1988), 413--419.

\bibitem{Con-Saut2}P. Constantin, Jean-Claude Saut, {\it Local Smoothing Properties of Schr\"odinger Equations}, Indiana Univ. Math. J.  \textbf{38} (1989), 791--810

\bibitem{DAn} P. D'Ancona, \emph{Kato smoothing and Strichartz estimates for wave equations with magnetic potentials}, Commun. Math. Phys. \textbf{335}  (2015), 1--16.

\bibitem{DDY}Q. Deng, Y. Ding, and X. Yao, \emph{Gaussian bounds for
  higher-order elliptic differential operators with {K}ato type potentials}, J.
  Funct. Anal. \textbf{266} (2014), no.~8, 5377--5397.


\bibitem{Da}E. B. Davies,
\emph{Kato class potentials for higher-order elliptic operators}, J.
  London Math. Soc. (2) \textbf{58} (1998), no.~3, 669--678.


\bibitem{DaHi} E. B. Davies, A. M. Hinz, {\it Explicit constants for Rellich inequalities in $L^p(\Omega)$}, Math. Z. \textbf{227} (1998) 511--523.


\bibitem{Erdogan-Green-Toprak}
M. Erdo\u{g}an, W. Green, and E.Toprak, \emph{On the fourth
  order schr\"odinger equation in three dimensions: dispersive estimates and
  zero energy resonance},  J. Differential Equations, \textbf{271} (2021), 152-185.

\bibitem{Fra1}
R. L. Frank, {\it Eigenvalue bounds for Schr\"odinger operators with complex potentials}, Bull. Lond. Math. Soc. \textbf{43} (2011), 745--750
\bibitem{Fra2}
R. L. Frank, {\it Eigenvalue bounds for Schr\"odinger operators with complex potentials. III}, Trans. Amer. Math. Soc. \textbf{370} (2018), 219--240

\bibitem{FrSi} R. L. Frank, B. Simon, {\it Eigenvalue bounds for Schr\"odinger operators with complex potentials. II}, J. Spectral Theory, \textbf{3} (2017), 633--658.

\bibitem{FSWY} H. Feng, A. Soffer, Z. Wu, X. Yao, {\it Decay estimates for higher-order elliptic operators}, Trans. Amer. Math. Soc., \textbf{373} (2020) , 2805-2859.

\bibitem{FSY}H. Feng, A. Soffer, X. Yao, {\it Decay estimates and Strichartz estimates of fourth-order Schr\"odinger operator}. J. Funct. Anal. \textbf{274} (2018), 605--658

\bibitem{FHHH}
R. Froese, I. Herbst, M. Hoffmann-Ostenhof, and T.
  Hoffmann-Ostenhof, \emph{On the absence of positive eigenvalues for one-body
  {S}chr\"{o}dinger operators}, J. Analyse Math. \textbf{41} (1982), 272--284.

\bibitem{GVV} M. Goldberg, L. Vega, N. Visciglia, {\it Counterexamples of Strichartz inequalities for Schr\"odinger equations with repulsive potentials}, Int. Math. Res. Not. (2006), Art. ID 13927

\bibitem{Gra} L. Grafakos, {\it Classical Fourier analysis. Second edition}, Graduate Texts in Mathematics, 249. Springer, New York, 2008.

\bibitem{GT}W. Green, E. Toprak, {\it On the fourth order Schr\"odinger equation in four dimensions: dispersive estimates and zero energy resonances}. J. Differential Equations \textbf{267} (2019), 1899--1954.

\bibitem{GLNY} Z. Guo, J. Li, K. Nakanishi, L. Yan, {\it On the boundary Strichartz estimates for wave and Schr\"odinger equations}, J. Differential Equations \textbf{265} (2018), 5656--5675.


\bibitem{Gut} S. Guti\'errez, {\it Non trivial $L^q$ solutions to the Ginzburg-Landau equation}, Math. Ann. \textbf{328} (2004), 1--25.



\bibitem{HS15}I. Herbst and E. Skibsted, \emph{Decay of eigenfunctions of elliptic {PDE}'s,
  {I}}, Adv. Math. \textbf{270} (2015), 138--180.

\bibitem{HS17}I. Herbst and E. Skibsted, \emph{Decay of eigenfunctions of elliptic {PDE}'s, {II}}, Adv. Math.
  \textbf{306} (2017), 177--199.

\bibitem{Ho} L. H\"{o}rmander, \emph{The analysis of linear partial differential
  operators. {II}}, Classics in Mathematics, Springer-Verlag, Berlin, 2005,
  Differential operators with constant coefficients, Reprint of the 1983
  original.


\bibitem{HYZ} S. Huang, X. Yao, Q. Zheng, {\it $L^p$-limiting absorption principle of Schr\"odinger operators and applications to spectral multiplier theorems}, Forum Math. \textbf{30} (2018), 43--55.

\bibitem{HZ}S. Huang, Q. Zheng, {\it Endpoint uniform Sobolev inequalities for elliptic operators with applications}, J. Differential Equations \textbf{267} (2019),  4609-4625.




\bibitem{IJ}
A. Ionescu and D. Jerison, \emph{On the absence of positive eigenvalues of
  {S}chr\"{o}dinger operators with rough potentials}, Geom. Funct. Anal.
  \textbf{13} (2003), no.~5, 1029--1081.

    \bibitem{JK}
A. Jensen, and T. Kato, {\it Spectral properties of Schr\"odinger operators and time--decay of the wave functions.} Duke Math. J. \textbf{46}  (1979), no. 3, 583--611.

  \bibitem{JSS}
J.-L. Journ\'{e}, A.~Soffer, and C.~D. Sogge, \emph{Decay estimates for
  {S}chr\"{o}dinger operators}, Comm. Pure Appl. Math. \textbf{44} (1991),
  no.~5, 573--604.

\bibitem{Kat} T. Kato, \emph{Wave operators and similarity for some non-self-adjoint operators}, Math. Ann. \textbf{162} (1965/1966), 258--279.

\bibitem{K}
T. Kato, \emph{Growth properties of solutions of the reduced wave equation
  with a variable coefficient}, Comm. Pure Appl. Math. \textbf{12} (1959),
  403--425.

 \bibitem{Ka} T. Kato, {\it On the Cauchy problem for the (generalized) Korteweg-de Vries equation}. Studies in applied mathematics, 93-128, Adv. Math. Suppl. Stud., 8, Academic Press, New York, 1983.



\bibitem{KaYa} T. Kato, K. Yajima, \emph{Some examples of smooth operators and the associated smoothing effect}, Rev. Math. Phys. \textbf{1} (1989), 481--496.

\bibitem{KeTa}M. Keel, T. Tao, {\it Endpoint Strichartz estimates}, Amer. J. Math. \textbf{120} (1998), 955-980.

\bibitem{KPV}C. E. Kenig, G. Ponce, L. Vega, {\it Oscillatory integrals and regularity of dispersive equations}. Indiana Univ. Math. J. \textbf{40} (1991), no. 1, 33-69.

\bibitem{KRS} C. E. Kenig, A. Ruiz, C. D. Sogge, {\it Uniform Sobolev inequalities and unique continuation for second order constant coefficient differential operators}, Duke Math. J. \textbf{55} (1987), 329-347.


\bibitem{KK12}A. Komech and E. Kopylova, \emph{Dispersion decay and scattering theory}, John Wiley \& Sons, Inc., Hoboken, NJ, 2012.

\bibitem{KoTa}
H. Koch and D. Tataru, \emph{Carleman estimates and absence of
  embedded eigenvalues}, Comm. Math. Phys. \textbf{267} (2006), no.~2,
  419--449.

\bibitem{Kur78}
S.~T. Kuroda, \emph{An introduction to scattering theory}, Lecture Notes
  Series, vol.~51, Aarhus Universitet, Matematisk Institut, Aarhus, 1978.


\bibitem{Miz0}
H. Mizutani, {\it Remarks on endpoint Strichartz estimates for Schr\"odinger equations with the critical inverse-square potential}, J. Differential Equations, \textbf{263} (2017) 3832--3853.
%
\bibitem{Miz1} H. Mizutani, {\it Eigenvalue bounds for non-self-adjoint Schr\"odinger operators with the inverse-square potential}, J. Spectral Theory, \textbf{9} (2019), 677--709.
%
\bibitem{Miz2}H. Mizutani, {\it Uniform Sobolev estimates for Schr\"odinger operators with scaling-critical potentials and applications}, Anal. PDE, \textbf{13} (2020), 1333--1369.

\bibitem{Miz} H. Mizutani, {\it Strichartz estimates for Schr\"odinger equations with slowly decaying potentials}, J. Funct. Anal. \textbf{279} (2020), no. 12, 108789, 57 pp.

\bibitem{MiYa1} H. Mizutani, X. Yao, {\it Kato smoothing, Strichartz and uniform Sobolev estimates for fractional operators with sharp Hardy potentials}, Commun. Math. Phys. \textbf{388} (2021), 581-623.

\bibitem{MiWangYao} H. Mizutani, H. Wang, X. Yao, {\it Global wellposedness and blowup for higher-order and fractional nonlinear Schr\"odinger equations with Hardy potentials}. In preparation.



\bibitem{Pau} B. Pausader, {\it Global Well-Posedness for Energy Critical Fourth-Order Schr\"odinger Equations in the Radial Case}, Dynamics of PDE, \textbf{4} (2007), 197--225.

\bibitem{ReSi} M. Reed, B. Simon, Methods of Modern Mathematical Physics, II, IV, Academic Press,  New York-London, 1975, 1978

\bibitem{RoSc} I. Rodnianski, W. Schlag, {\it Time decay for solutions of Schr\"odinger equations with rough and time-dependent potentials}, Invent. Math. \textbf{155}  (2004), 451--513.

\bibitem{RuSu} M. Ruzhansky, M. Sugimoto, {\it Smoothing properties of evolution equations via canonical transforms and comparison principle}, Proc. London Math. Soc. \textbf{105} (2012) 393--423.

\bibitem{SaWh} E. Sawyer and R. L. Wheeden, {\it Weighted inequalities for fractional integrals on Euclidean and homogeneous spaces}, Amer. J. Math. \textbf{114} (1992), 813--874.

  \bibitem{Schechter}
M. Schechter, \emph{Spectra of partial differential operators},
  North-Holland Publishing Co., Amsterdam-London; American Elsevier Publishing
  Co., Inc., New York, 1971, North-Holland Series in Applied Mathematics and
  Mechanics, Vol. 14.

  \bibitem{Sch}
W.~Schlag, \emph{Dispersive estimates for {S}chr\"{o}dinger operators in
  dimension two}, Comm. Math. Phys. \textbf{257} (2005), no.~1, 87--117.

\bibitem{SYY} A. Sikora, L. Yan, X. Yao, {\it Spectral multipliers, Bochner--Riesz means and uniform Sobolev inequalities for elliptic operators}, Int. Math. Res. Not. IMRN 2018, 3070--3121.


\bibitem{Simon} B. Simon \emph{Operator theory}, A Comprehensive Course in Analysis, Part 4, American Mathematical Society, Providence, RI, 2015.

\bibitem{Simon3}
B. Simon, \emph{On positive eigenvalues of one-body {S}chr\"{o}dinger
  operators}, Comm. Pure Appl. Math. \textbf{22} (1969), 531--538.

  \bibitem{So} C.D. Sogge: {\it Fourier integrals in classical analysis,} Cambridge University Press, 1993.

\bibitem{StWe} E.M. Stein, G. Weiss, {\it Fractional integrals on $n$-dimensional Euclidean space}, J. Math. Mech. \textbf{7} (1958), 503--514.

\bibitem{Yajima-JMSJ-95}
K. Yajima, \emph{The {$W^{k,p}$}-continuity of wave operators for
  {S}chr\"{o}dinger operators}, J. Math. Soc. Japan \textbf{47} (1995), no.~3,
  551--581.
.
\end{thebibliography}
\end{document}